\newtheorem{definition}{Definition}
\newtheorem{theorem}{Theorem}
\newtheorem*{theorem1}{Theorem}
\newtheorem{proposition}{Proposition}
\newtheorem{lemma}{Lemma}
\theoremstyle{remark}
\newtheorem{remark}{Remark}
\def\R{\mathbb R}
\title{A Minimality property for entropic solutions to scalar conservation laws in $1+1$ dimensions}
\author{Michael Blaser$^{*}$\;, \;Tristan Rivi\`ere\footnote{Department of Mathematics, ETH Zentrum,
8093 Z\"urich, Switzerland.}}
\def\res{\mathop{\hbox{\vrule height 7pt width .5pt 
depth 0pt\vrule height .5pt width 6pt depth 0pt}}\nolimits}
\begin{document}
\maketitle
{\bf Abstract :} {\it The Second Law of Thermodynamics asserts that the physical entropy of an adiabatic system is an increasing function in time.
In this paper we will study a more stringent version of this law, according to which the entropy should not only increase in time, but the rate of increase is optimal in absolute value among all possible evolutions.
We will establish this property in the framework of non-linear scalar hyperbolic  conservation law with strictly convex fluxes. }

\section{Introduction}
 We consider solutions to the following equation
\begin{equation}\label{scl}
\left .
\begin{array}{rclcl}
 \partial_{t}u+\partial_{x}f(u)& =&0&\mbox{in}&\mathbb{R}_+\times\mathbb{R}\,, \\ 
u(x,0) & =&u_{0}(x)\,,
\end{array}
\right \}
\end{equation}
where the flux $f$ is strictly convex ( $f''\ge c>0$ )and the initial date $u_0\in L^{\infty}$. It is well known, that, even for smooth initial data, the classical solution can cease to exist in finite time, due to the possible formation of shocks (see Chapter 4.2 in \cite{Da}). Therefore one has to consider weak solutions of (\ref{scl}), i.e. solutions, which satisfy (\ref{scl}) in the distributional sense. However it turned out, that, for a given initial data, the space of weak solutions is huge (see Chapter 4.4 in \cite{Da}). Therefore additional conditions have to be imposed to single out the physical relevant weak solutions in some models.

In 1957 Oleinik proved in \cite{Ol} uniqueness of bounded weak solutions, which satisfy almost everywhere her '\textit{E-condition}'
\begin{equation}\label{e_condition}
 u(y,t)-u(x,t)\leq \frac{y-x}{ct}\,,\quad\mbox{for}\quad x<y\,,t>0\,,
\end{equation}
where $c=\inf f''$. A immediate consequence of this condition (\ref{e_condition}) is a spectacular regularization phenomena. Oleinik proved, that for bounded measurable initial data, the weak solution satisfying almost everywhere (\ref{e_condition}) becomes immediately locally BV in space and locally in space-time in the complement of the initial line .

A more powerful approach was given by Kruzhkov in \cite{Kr}, where he replaces condition (\ref{e_condition}) by a family of integral inequalities. This approach covers also cases, where $f$ is non-convex and the space dimension is bigger than one. However in the case of convex fluxes one can show, that his \textit{entropy condition} is equivalent to Oleinik's \textit{E-condition} (see Chapter 8.5 in \cite{Da}). More precisely for $u_0\in L^{\infty}$ he proved existence and uniqueness of weak solutions satisfying the \textit{entropy condition}: He considers the family of convex entropy flux pairs $(\eta_a, \xi_a)_{a\in\mathbb{R}}$, where
\begin{equation}\label{kruzhkov_family}
 \eta_{a}(u)=(u-a)^+\quad\mbox{and}\quad \xi_a(u)=\operatorname{sign}(u-a)^+(f(u)-f(a))\,,
\end{equation}
and $w^+$ stands for $\max\{w, 0\}$. Then an entropy solution is a bounded function $u$, which satisfies (\ref{scl}) in the sense of distributions and
\begin{equation}\label{e_inequality}
\partial_t\eta_a(u)+\partial_x\xi_a(u)\leq0\,.
\end{equation} 
Equivalently one can replace the one parameter family $(\eta_a, \xi_a)_{a\in\mathbb{R}}$ and assume, that (\ref{e_inequality}) is fulfilled for all convex $\eta$ with corresponding entropy flux $\xi$, which is defined by $\xi=\int \eta'f'$.
As a consequence of this one can show, if the initial data $u_0$ is in BV, that $u$ is in BV for all later times.

Let $a\wedge b$ denote $\min\{a,b\}$. Let $u\in L^{\infty}(\mathbb{R}\times[0,T))$ be a weak solution of (\ref{scl}), such that 
\[m(x,t,a)=\partial_{t}(u\wedge a)+\partial_x f(u\wedge a)\in\mathcal{M}_{loc}(\mathbb{R}\times\mathbb{R}_+\times\mathbb{R})\,\]
where $\mathcal{M}$ denotes the space of Radon measures. One can define the absolute value of the entropy production over a set $\Omega\subset\mathbb{R}\times\mathbb{R}_+$ as being
\begin{equation}\label{entropy_production}
 EP=\int_{\mathbb{R}}|m|(\Omega,a)\,da\,.
\end{equation}
In the case of $u$ being an entropy solution and hence in BV, the measure $m(x,t,a)$ and therefore the entropy production of $u$ simplifies to
\begin{equation}\label{ep_entropy_solution}
 EP=\int_{\Omega}\Delta(u^{+},u^{-})\mathcal{H}^1\res J_u\,,
\end{equation}
where $J_u$ denotes the rectifiable set of jump points of $u$, $u_+$ and $u_-$ are respectively the left and right approximate limits of $u$ for some orientation of $J_u$  and
\begin{equation}\label{Delta}
\Delta(a,b)=\frac{(a-b)^2\left[\frac{f(a)+f(b)}{2}\right]-(a-b)\int_{a}^{b}f(s)\,ds}{\left[(a-b)^2+(f(a)-f(b))^2\right]^{\frac12}}\,.
\end{equation}
It is natural to compare the different entropic productions of the weak solutions to (\ref{scl})  - $BV$ or not $BV$ ! - and to ask the following questions : {\it does there exists a weak solution which minimizes the entropy production and, if so, what properties does a minimizer of (\ref{entropy_production}) have}. 

In this work we provide a partial answer to this question. We show  a weak solution of (\ref{scl}) whose entropy production increases in time less, than any other weak solution's entropy production, has to be the entropy solution. Precisely 

Let $\mathcal{W}$ denote the set of defect measures induced by a weak solution of (\ref{scl}), i.e.
\begin{equation}
\mathcal{W}:=\left \{ 
\begin{array}{c}
m(x,t,a)\in \mathcal{M}_{loc} ~\text{s.t.}~m(x,t,a)=\partial_{t}(u\wedge a)+\partial_{x}f(u\wedge a),\\
\text{where}~u\in L^{\infty}~\text{is a weak sol. of (\ref{scl}).}\end{array}\right\}
\end{equation}
Our main result in the present work is the following.
\begin{theorem}\label{minimality_theorem}
Let $f\in C^{2}(\mathbb{R})$ such that $f''\geq c>0$ and
\begin{equation}\label{infinity_behaviour}
 \lim_{|x|\rightarrow\infty}f(x)=\infty\,.
\end{equation}
Moreover let $u_{0}\in L^{\infty}(\mathbb{R})$ be compactly supported. Let $u\in L^{\infty}(\mathbb{R}\times [0,T))$ be an arbitrary weak solution of (\ref{scl}), such that  $m(x,t,a)=\partial_t(u\wedge a)+\partial_x f(u\wedge a)$  is locally a Radon measure in ${\R}\times [0,T)\times {\R}$ . Assume the ''entropy production'' $m$ satisfies
\begin{equation}\label{minimality}
\int_{\mathbb{R}}\left|m\right|\left(\mathbb{R}\times(0,\bar{t}),a\right)\,da\leq \int_{\mathbb{R}}\left|q\right|\left(\mathbb{R}\times(0,\bar{t}),a\right)\,da~\forall~q\in\mathcal{W}~\text{and}~\forall \bar{t}\in(0,T)\,.
\end{equation}
Then $u$ is the entropy solution, i.e. satisfies (\ref{e_condition}) and equivalently (\ref{e_inequality}).
\end{theorem}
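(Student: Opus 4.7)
My plan is to combine the minimality hypothesis (\ref{minimality}) with the Kruzhkov semigroup via a concatenation construction and force the defect measure $m$ to be a non-negative Radon measure; the conclusion (\ref{e_inequality}) then follows from Kruzhkov's characterization of entropy solutions. The first ingredient is a \emph{signed-mass identity}: using $m_a=-[\partial_t\eta_a(u)+\partial_x\xi_a(u)]$, the compact support of $u_0$, the finite propagation speed enjoyed by any bounded weak solution (since $|f'|$ is locally bounded on the range of $u$), and mass conservation $\int(u(\cdot,\bar t)-u(\cdot,s))\,dx=0$, one verifies
\[
\int_\mathbb{R} m\bigl(\mathbb{R}\times(s,\bar t),a\bigr)\,da=\tfrac12\bigl(\|u(\cdot,s)\|_{L^2}^2-\|u(\cdot,\bar t)\|_{L^2}^2\bigr)
\]
for any $0\le s<\bar t<T$ and any weak solution $u$. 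The subtle point is that $\int\eta_a(u(\cdot,t))\,dx\,da$ diverges linearly as $a\to-\infty$, but the divergent contributions cancel exactly between times $s$ and $\bar t$ thanks to the conservation of $\int u\,dx$.

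I then exploit the Kruzhkov semigroup by a concatenation argument. For a.e.\ $s\in(0,T)$, let $\tilde u_s$ denote the Kruzhkov entropy solution on $[s,T)$ with initial datum $u(\cdot,s)$, and set
\[
v_s:=u\cdot\mathbf{1}_{\{t<s\}}+\tilde u_s\cdot\mathbf{1}_{\{t\ge s\}}.
\]
Then $v_s$ is itself a weak solution of (\ref{scl}), and its defect measure satisfies $m_{v_s}=m$ on $\{t<s\}$ and $m_{v_s}=m_{\tilde u_s}\ge 0$ on $\{t>s\}$. Plugging $q=m_{v_s}$ into (\ref{minimality}) and cancelling the common contribution on $[0,s]$ yields, for every $\bar t\in(s,T)$,
\[
\int_\mathbb{R}|m|\bigl(\mathbb{R}\times(s,\bar t),a\bigr)\,da\le\int_\mathbb{R} m_{\tilde u_s}\bigl(\mathbb{R}\times(s,\bar t),a\bigr)\,da=\tfrac12\bigl(\|u(\cdot,s)\|^2-\|\tilde u_s(\cdot,\bar t)\|^2\bigr).
\]
Combining with the signed-mass identity applied to $u$ and writing $|m|=m+2m^-$ gives
\[
2\int_\mathbb{R} m^-\bigl(\mathbb{R}\times(s,\bar t),a\bigr)\,da\le\tfrac12\bigl(\|u(\cdot,\bar t)\|^2-\|\tilde u_s(\cdot,\bar t)\|^2\bigr).
\]

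The closing step is to promote this integrated bound into the pointwise measure identity $m^-\equiv 0$. As $s\to\bar t^-$, $L^2$-continuity of the Kruzhkov semigroup at the initial datum forces the right-hand side to vanish, and both sides are $O(\bar t-s)$; a Lebesgue-differentiation argument in $s$ then shows that the time-density of $m^-$ vanishes for a.e.\ $s$, so $m^-$ is the zero Radon measure, i.e.\ $m\ge 0$, which is precisely (\ref{e_inequality}). The hard part is exactly this step: identifying and comparing the $O(\bar t-s)$ rates requires a careful trace theory for $u(\cdot,s)$, nontrivial because general $L^\infty$ weak solutions need not be $BV$ in time. An alternative route that sidesteps the time-differentiation is a finite-propagation localisation: if $m^-$ had positive mass, one would surgically replace a ``rarefaction-shock'' portion of $u$, inside a single characteristic cone, by the corresponding entropy rarefaction fan, producing a competitor $q\in\mathcal{W}$ with strictly smaller $\int|q|\,da$ than $\int|m|\,da$, contradicting (\ref{minimality}). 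Making this construction rigorous at the level of $L^\infty$ weak solutions, where the ``rarefaction-shock'' structure must be read off from the disintegration of $m^-$, is the technical core of the argument.
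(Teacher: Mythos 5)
You propose a genuinely different route from the paper's: a global concatenation with the Kruzhkov semigroup, powered by a signed-mass identity, instead of a local blow-up at points of negative density. The two preliminary ingredients are sound in substance. The signed-mass identity is correct (and the divergent $a\to-\infty$ tails do cancel by conservation of $\int u\,dx$), and the concatenated $v_s$ is a legitimate competitor once one checks existence of the time-slice trace $u(\cdot,s)$ for a.e. $s$ and absence of residual defect on $\{t=s\}$; plugging it into (\ref{minimality}) and cancelling on $[0,s]$ indeed gives
\begin{equation*}
2\int_{\mathbb R}m^-\bigl(\mathbb R\times(s,\bar t),a\bigr)\,da \;\le\; \tfrac12\bigl(\|u(\cdot,\bar t)\|^2-\|\tilde u_s(\cdot,\bar t)\|^2\bigr).
\end{equation*}

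The gap is exactly where you flag it, but it is worse than you suggest: the right-hand side is $O(\bar t-s)$ with a constant that need \emph{not} vanish, and need not dominate the left-hand side, so no contradiction comes out of Lebesgue differentiation alone. Using the signed-mass identity for both $u$ and $\tilde u_s$, the right-hand side equals $-2\int m(\mathbb R\times(s,\bar t),a)\,da + 2\int m_{\tilde u_s}(\mathbb R\times(s,\bar t),a)\,da$; dividing by $\bar t-s$ and letting $\bar t\downarrow s$ reduces the claim $m^-\equiv0$ to the instantaneous comparison $\partial_t^+\kappa(s)\le\partial_t^+\mu^+(s)$, where $\kappa$ is the running dissipation of the Kruzhkov flow from the common slice $u(\cdot,s)$ and $\mu^+$ is the positive part of $\int m\,da$. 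That is precisely an entropy-rate-admissibility principle for the datum $u(\cdot,s)$, which for an arbitrary $L^\infty$ (not locally $BV$) datum is not even clearly meaningful, and whose proof would essentially be the theorem itself. So the closing step is a genuine gap, not a trace technicality, and the argument circles back into the statement you want to prove.

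The paper's route is your "alternative route" made rigorous, in two stages that your sketch collapses. First (Lemma~\ref{lemma_positiv_density}): under (\ref{minimality}) one blows up at a hypothetical point where $\frac1r\int m(B_r,\cdot)\,da$ has negative limsup; Lecumberry's rectifiability theorem for the defect measure identifies the blow-up limit as a single planar non-entropic shock, the model ``rarefaction-shock'' in your language, and the competitor is built by inserting, into a small rescaled \emph{trapezoid} $\Gamma_{t_1}^{t_2}$, the entropy solution with lateral boundary data read off from $u$. The apparatus of Proposition~\ref{existence} and Lemma~\ref{properties_of_entropy_solutions} (trapezoid well-posedness, $L^\infty$- and mass-bounds, $L^1$-contraction) exists precisely to make that surgery produce an honest $q\in\mathcal W$ and to pass to the blow-up limit, where the inserted rarefaction fan carries zero entropy production and contradicts minimality. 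This only controls the non-negativity of the $a$-averaged measure $\int m\,da$, so a second, separate viscosity-solution argument (Lemma~\ref{proving_entropy}, following~\cite{ALR}) is needed to upgrade it to $m(\cdot,a)\ge0$ for every $a$, i.e.\ to (\ref{e_inequality}). Your concatenation tries to get $m^-\equiv0$ for every $a$ in one stroke — a stronger output, which is exactly why the inequality it reaches is one that the $L^\infty$ hypotheses do not let you close.
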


A similar criteria in a more restrictive setting is considered by Dafermos in Chapter 9.7 of \cite{Da}. He considers weak solutions $u$ of (\ref{scl}) with initial data
\begin{equation}\label{riemann}
u_0(x)=\left\{\begin{array}{lcl}
                 u_l&\mbox{if}&x<0\,,\\
		u_r&\mbox{if}&x>0\,.
                \end{array}\right.
\end{equation}
Since the conservation law is invariant under Galilean transformations it is reasonable in this case to consider weak solutions of the form\[ u(x,t)=v\left(\frac{x}{t}\right)\,.\]
One can then define $\omega=\frac{x}{t}$ and consider $v$ as a function only dependent of $\omega$, i.e. $v=v(\omega)$. Then $v(\omega)$ satisfies the ordinary differential equation
\[\frac{d}{d\omega}\left(f(v(\omega))-\omega v(\omega)\right)+v(\omega)=0\]
in the sense of distributions and has prescribed end states
\[\lim_{\omega\rightarrow-\infty}v(\omega)=u_l\quad\mbox{and}\quad\lim_{\omega\rightarrow\infty}v(\omega)=u_r\,.\]
Furthermore it is assumed that $v$ is in $BV$ and denotes $J_v$ the set of jump points $\omega$  for $v$. For a given entropy-entropy flux pair $(\eta(u),\xi(u))$ C. Dafermos defines the combined entropy of the shocks in $v$ by
\begin{equation}\label{combined_entropy}
\mathcal{P}_{v}=\sum_{\omega\in J_v}\left\{\xi(v(\omega+))-\xi(v(\omega-))-\omega\left[\eta(v(\omega+))-\eta(v(\omega-))\right]\right\}\,.
\end{equation}
Furthermore he introduces the rate of change of the total entropy production 
\[\dot{\mathcal{H}}_v=\frac{d}{dt}\int_{-\infty}^{\infty}\eta(u(x,t))\,dx=\int_{-\infty}^{\infty}\eta(v(\omega))\,d\omega\,,\]
for entropy-entropy flux pairs $(\eta,\xi)$ such that $\eta(u_l)=\eta(u_r)=0$.

He shows that in this simple case the rate of change of the total entropy and the entropy productions are related to each other by 
\begin{equation*}
 \dot{\mathcal{H}}_v=\mathcal{P}_v+\xi(u_l)-\xi(u_r)\,.
\end{equation*}
We can now relate the combined entropy $\mathcal{P}_v$ to our entropy productions (\ref{entropy_production}). To do so one notices, that for a $T>0$, $\psi\in C^{\infty}_{c}(\mathbb{R}\times(0,T))$ and an entropy-entropy flux pair $(\eta,\xi)$ we get after a change of variable
\begin{equation}\label{relation_entropy_production}
\begin{split}
\int_{\mathbb{R}\times[0,T]}&\eta(u(x,t))\partial_t\psi+\xi(u(x,t))\partial_x\psi\,dx\,dt\\
&=\int_{\mathbb{R}}\xi(v(\omega))\frac{d}{d\omega}\phi(\omega)-\eta(\omega)\frac{d}{d\omega}\left[\omega\cdot\phi(\omega)\right]\,d\omega\\
&=\sum_{\omega\in J_v}\phi(\omega)\left\{\xi(v(\omega+))-\xi(v(\omega-))-\omega\left[\eta(v(\omega+))-\eta(v(\omega-))\right]\right\}\,,
\end{split}
\end{equation}
where
\begin{equation}\label{def_phi}
\phi(\omega)=\int_0^T\psi(\omega t,t)\,dt\,.
\end{equation}
For a jump point $\omega$ we write $v_+=v(\omega+)$ and $v_-=v(\omega-)$, then taking the particular entropy-entropy flux pair $(\eta_a,\xi_a)$, defined in (\ref{kruzhkov_family}) and using identity (\ref{relation_entropy_production}) gives
\begin{multline}\label{relation_entropy_production1}
\int_{\mathbb{R}}\int_{\mathbb{R}\times[0,T]}\psi(x,t)dm(x,t,a)\,da\\
=-\int_{\mathbb{R}}\sum_{\omega\in J_v}\phi(\omega)\left\{\xi_a(v_+)-\xi_a(v_-))-\omega\left[\eta_a(v_+)-\eta_a(v_-)\right]\right\}\,da\\
\end{multline}
A short calculation reveals
\begin{multline}\label{relation_entropy_production2}
\int_{\mathbb{R}}\xi_a(v_+)-\xi_a(v_-)-\omega\left[\eta_a(v_+)-\eta_a(v_-)\right]\,da\\=-\frac12\omega\left(v_-^2-v_+^2\right)-\int_{v_-}^{v_+}sf'(s)\,ds\,,
\end{multline}
where we used the Rankine-Hugoniot condition for self similar solutions:
\[f(v_+)-f(v_-)=\omega(v_+-v_-)\,.\]
Applying (\ref{relation_entropy_production2}) in (\ref{relation_entropy_production1}) gives
\begin{multline}\label{relation_entropy_production3}
\int_{\mathbb{R}}\int_{\mathbb{R}\times[0,T]}\psi(x,t)dm(x,t,a)\,da\\
=\sum_{\omega\in J_v}\phi(\omega)\left\{\xi(v(\omega+))-\xi(v(\omega-))-\omega\left[\eta(v(\omega+))-\eta(v(\omega-))\right]\right\}\,da\,,
\end{multline}
where $(\eta,\xi)$ is the entropy-entropy flux pair
\begin{equation}\label{part_entropy_pair}
\eta(v)=\frac12v^2\quad\mbox{and}\quad \xi(v)=\int_0^v sf'(s)\,ds\,.
\end{equation}
From (\ref{relation_entropy_production3}) we deduce with (\ref{def_phi})
\begin{equation}\label{relation_entropy_production4}
\frac{1}{T}\int_{\mathbb{R}}m(\mathbb R\times[0,T],a)\,da=\mathcal{P}_v\,
\end{equation}
where the combined entropy production $\mathcal{P}_v$ is taken for the entropy-entropy flux pair defined in (\ref{part_entropy_pair}).
Since $T>0$ is arbitrary and $\mathcal{P}_v$ independent of $T$ it follows from (\ref{relation_entropy_production4})
\begin{equation}\label{relation_entropy_production5}
\frac{d}{dt}\int_{\mathbb{R}}m(\mathbb R\times[0,t],a)\,da=\mathcal{P}_v\quad\mbox{for all}\quad t>0\,,
\end{equation}
which finally relates (\ref{combined_entropy}) to (\ref{entropy_production}).

Then a weak solution $u=v\left(\frac{x}{t}\right)$ of (\ref{scl}) with initial data (\ref{riemann}) is said to satisfy the \textit{entropy rate admissibility criterion} if
it satisfies the following optimality criterion of the entropy production 
\begin{equation*}
 \mathcal{P}_v\leq\mathcal{P}_{\tilde{v}}
\end{equation*}
 or equivalently
\begin{equation*}
 \dot{\mathcal{H}}_v\leq\dot{\mathcal{H}}_{\tilde{v}}
\end{equation*}
holds, for any other weak solution $\tilde{u}=\tilde{v}\left(\frac{x}{t}\right)$ of (\ref{scl}) with initial condition (\ref{riemann}).

Using (\ref{relation_entropy_production5}) one can express the entropy rate admissibility criterion for the particular entropy-entropy flux pair in (\ref{part_entropy_pair}) in terms of the entropy production (\ref{entropy_production}): A solution $u=v\left(\frac{x}{t}\right)$ with initial data (\ref{riemann}) and  defect measure $m(x,t,a)$ satisfies \textit{entropy rate admissibility criterion} if
\begin{equation}\label{entropy_rate}
\frac{d}{dt}\int_{\mathbb{R}}m(\mathbb R\times[0,t],a)\,da\leq \frac{d}{dt}\int_{\mathbb{R}}\tilde{m}(\mathbb R\times[0,t],a)\,da\quad\mbox{for all}\quad t>0\,
\end{equation}
for any other weak solution $\tilde{u}=\tilde{v}\left(\frac{x}{t}\right)$ of (\ref{scl}) with initial condition (\ref{riemann}) and defect measure $\tilde{m}(x,t,a)$. One can also integrate (\ref{entropy_rate}) and obtains the equivalent condition
\begin{equation}\label{entropy_rate2}
\int_{\mathbb{R}}m(\mathbb R\times[0,t],a)\,da\leq \int_{\mathbb{R}}\tilde{m}(\mathbb R\times[0,t],a)\,da\quad\mbox{for all}\quad t>0\,.
\end{equation}

Therefore (\ref{entropy_rate}) and (\ref{entropy_rate2}) show, that the entropy rate admissibility criterion can be interpreted as a growth condition of the entropy production (\ref{entropy_production}), which is similar to the growth condition (\ref{minimality}) in Theorem \ref{minimality_theorem}. In Chapter 9.5 of \cite{Da} it is proved:
\begin{theorem1}\cite{Da}
A weak solution $u$ of (\ref{scl}) with initial data (\ref{riemann}) satisfies the entropy rate admissibility criterion for an entropy-entropy flux pair $(\eta,\xi)$ if and only if $u$ satisfies the E-condition (\ref{e_condition}).
\end{theorem1}
Again by (\ref{entropy_rate}) and (\ref{entropy_rate2}) one sees, that this Theorem establishes, similar as in Theorem \ref{minimality_theorem}, a connection between growth rate of the entropy production (\ref{entropy_production}) and entropy admissibility conditions (\ref{e_condition}) and (\ref{e_inequality}).
In Chapter 9.5 there is also an extension of this theorem in the case of strictly hyperbolic systems.

\vspace{0.5cm}

Another results relating an optimality criterion to entropic solution is given by A. Poliakovsky in \cite{Po}. For $u:\mathbb{R}^n\times[0,T]\to\mathbb{R}^k$ he considers a family of energy functionals
\begin{equation}\label{energy1}
I_{\varepsilon, f}(u)=\frac12\int_0^T\int_{\mathbb{R}^n}\left(\varepsilon|\nabla_x u|^2+\frac{1}{\varepsilon}|\nabla_x H|^2\right)\,dx\,dt+\frac12\int_{\mathbb{R}^n}|u(x,T)|^2\,dx\,
\end{equation}
where 
\[\Delta_x H_u=\partial_t u+\operatorname{div}_x f(u)\,.\]
Under certain assumptions on the flux $f$ he shows, that there exists a minimizer to
\[ \inf \left\{ I_{\varepsilon,f}(u):~u(x,0)=u_0(x)\right\} \]
and this minimizer satisfies
\begin{equation*}\left.\begin{array}{rcll}
\partial_t u+\operatorname{div}_x f(u)&=&\varepsilon\Delta_x H_u&\forall(x,t)\in\mathbb{R}^n\times(0,T)\,,\\
u(x,0)&=&u_0(x)&\forall x\in\mathbb{R}^n\,.
\end{array}\right\}
\end{equation*}
In the particular case $k=1$, he calculates the $\Gamma$-limit of (\ref{energy1}) as $\varepsilon\to0^+$ and finds an alternative variational formulation of the admissibility criterion for the
particular solutions to the  scalar conservation laws that can be achieved by this relaxation procedure.

\vspace{0.5cm}

The result of A.Poliakovsky has been inspired by previous works establishing a link between some variational optimality condition of a relaxed problem and the entropy condition at the limit. Among these works we can quote  \cite{RS1}, \cite{RS2} and \cite{ALR}. Let us describe the results established in this 3 works here :

\medskip

We consider for a bounded domain $\Omega\subset\mathbb{R}^2$ the space $\mathcal{M}_{div}(\Omega)$, which consists of unit vectorfields $u$ such that $u=e^{i\varphi}$ for a $\phi\in L^{\infty}(\Omega,\mathbb{R})$ and $\operatorname{div}e^{i\varphi\wedge a}$ is a Radon measure over $\Omega\times {\R}$. This space $\mathcal{M}_{div}$ was introduced by S. Serfaty and the 
second author in \cite{RS1} and \cite{RS2} in connection to a problem related to micromagnetism. We give here a brief description. Let $\Omega$ be a bounded and simply connected domain, for $u\in W^{1,2}(\Omega,\mathbb{S}^1)$ and a $\varepsilon>0$ we consider
\begin{equation}\label{energy}
E_{\varepsilon}(u)=\varepsilon\int_{\Omega}|\nabla u|^2 + \frac{1}{\varepsilon}\int_{\mathbb{R}^2}|H|^2\,,
\end{equation}
where $H=\nabla(G*\hat{u})\,, \hat{u}=u$ on $\Omega$ and $\hat{u}=0$ in $\Omega^c$ and $G$ is the kernel of the Laplacian on $\mathbb{R}^2$.

It was proved in \cite{RS1}, \cite{RS2} that from any sequence $u_{\varepsilon_n}\in W^{1,2}(\Omega,\mathbb{S}^1)$ such that $\varepsilon\to0$ and $E_{\varepsilon_n}(u_{\varepsilon_n})<C$ one can extract a subsequence $u_{\varepsilon_{n'}}$ such that $\varphi_{\varepsilon_{n'}}$ converges strongly in $L^p(\Omega)$ for any $p<\infty$ to a limit $\varphi$ such that $e^{i\varphi}=u\in \mathcal{M}_{div}(\Omega)$.
Furthermore the authors are conjecturing that  the $\Gamma$-Limit should be given by the following functional $E_0$ over ${\mathcal M}_{div}(\Omega)$ :
\begin{equation*}
E_0(u):=2\int_{a\in\mathbb{R}}\left|\operatorname{div}\left(e^{i\varphi\wedge a}\right)\right|(\Omega)\,da
\end{equation*}
Part of the $\Gamma-$convergence has been proved as they established in one hand the following inequality
\begin{equation*}
E_0(u):=2\int_{a\in\mathbb{R}}\left|\operatorname{div}\left(e^{i\varphi\wedge a}\right)\right|(\Omega)\,da\leq\liminf E_{\varepsilon_{n'}}(u_{\varepsilon_{n'}})
\end{equation*}
and in the other hand that
\begin{equation}\label{inf_energy}
\lim_{\varepsilon\to0}\inf_{u\in W^{1,2}} E_{\varepsilon}(u)=2\inf_{u\in\mathcal{M}_{div}(\Omega)}\int_{a\in\mathbb{R}}\left|\operatorname{div}\left(e^{i\varphi\wedge a}\right)\right|(\Omega)\,da=2|\partial\Omega|\,,
\end{equation}
where $|\partial\Omega|$ is the perimeter of the set $\Omega$. One can prove (see \cite{RS1}), that the infimum on the right hand side is achieved by $u=-\nabla^{\perp}\operatorname{dist}(\cdot,\partial\Omega)\in\mathcal{M}_{div}(\Omega)$. The function $g=\nabla^{\perp}\operatorname{dist}(\cdot,\partial\Omega)$ is the viscosity solution of
\begin{equation}\label{eikonal}
\left.\begin{array}{rclcl}
|\nabla g|-1&=&0&\mbox{on}&\Omega\,,\\
g&=&0&\mbox{on}&\partial\Omega\,.
\end{array}\right\}
\end{equation}
A question, which was left open in \cite{RS1} and \cite{RS2} was to describe the possible limits $u$ of minimizing sequence of (\ref{energy}). It was conjectured that  $u=\pm\nabla^{\perp}dist(\cdot,\partial\Omega)$ are the only possible limits of sequences of minimizers.
A positive answer to this conjecture has been given in \cite{ALR}. Precisely, in \cite{RS2} it is proved that the limit $u$ of a minimizing sequence of (\ref{energy}) satisfies 
the {\it entropy condition}
\begin{equation}\label{positive_limit}
\operatorname{div}e^{i\varphi\wedge a}\geq0\quad\mbox{for all}\quad a\in\mathbb{R}
\end{equation}
or $\operatorname{div}e^{i\varphi\wedge a}\leq0\quad\mbox{for all}\quad a\in\mathbb{R}$.
Then in \cite{ALR} the following result is established
\begin{theorem1} \cite{ALR}
Let $u=-\nabla^{\perp}g$ be a divergence free unit vector-field in the space $\mathcal{M}_{div}(\Omega)$. The entropy condition (\ref{positive_limit}) holds if and only if $g$ is a viscosity solution of (\ref{eikonal}) and therefore $g$ is locally semiconcave in $\Omega$ and $u\in BV_{loc}(\Omega,\mathbb{S}^1)$.
\end{theorem1}
Therefore, as a conclusion, one deduces the following equivalences for this particular problem 
\[
\begin{array}{c}
\displaystyle\mbox{viscosity solution to (\ref{eikonal})}\quad\Longleftrightarrow\quad \mbox{ entropy condition (\ref{positive_limit}) }\\[5mm]
\Longleftrightarrow \mbox{ minimality of the entropy production (\ref{inf_energy})}\quad .
\end{array}
\]

\vskip 1cm

The paper is organized as follows: First, in section 2, we establish some technical preliminary results. Then in Section \ref{section_blow_up} we will show, that the measure \[\int_{\mathbb{R}}m(x,t,a)\,da\] has 
 no points with strictly negative density, outside possibly a set of 1-dimensional measure $0$, i.e. we claim
\begin{equation}
\label{uuu}
\lim_{r\rightarrow 0^+}\frac{1}{r}\int_{\mathbb{R}}m\left(B_{r}((x_{0},t_{0})),a\right)\,da\geq 0\quad\mbox{ for }{\mathcal H}^1\mbox{ a.e. }~ (x_{0},t_{0})\in\mathbb{R}\times(0,T).
\end{equation}
In the last section, using an  argument similar to the one used to prove the main result in \cite{ALR}, we deduce that the non negativity condition (\ref{uuu}) implies that $u$ is entropic.

\newpage

%%%%%%%%%%%%%%%%%%%%%%%%%%%%%%%%%%%%%%%%%%%%%%%%%%%%%%%%%%%%%%%%%%%%%%%%%%%%%%%%%%%%%%%%%%%%%%%%%%%%%%%%%%%%%%%%%%%%%%%%%%%%%%%%%%%
%%%%%%%%%%%%%%%%%%%%%%%%%%%%%%%%%%%%%%%%%%%%%%%%%%%%%%%%%%%%%%%%%%%%%%%%%%%%%%%%%%%%%%%%%%%%%%%%%%%%%%%%%%%%%%%%%%%%%%%%%%%%%%%%%%%
%%%%%%%%%%%%%%%%%%%%%%%%%%%%%%%%%%%%%%%%%%%%%%%%%%%%%%%%%%%%%%%%%%%%%%%%%%%%%%%%%%%%%%%%%%%%%%%%%%%%%%%%%%%%%%%%%%%%%%%%%%%%%%%%%%%
\subsection{Preliminary results}\label{preliminary}
In this section we define a notion of weak  entropy solutions (see Definition \ref{def_entropy_solution}) of scalar conservation laws on domain of trapezoidal shape (see (\ref{definition_boundary}). Afterward we will prove Lemma \ref{properties_of_entropy_solutions}, which roughly says that for that kind of entropy solutions the same properties hold as in the classical case. We will use this results in Section 2.2 and Section 2.3.

For $0<t_{1}<t_{2}<T$ and a $\delta>0$ we define the set
\begin{equation}\label{the_set_gamma}
\Gamma^{t_{2}}_{t_{1}}:=\{ (x,t) |~t_{2}> t> \gamma(x,t_{1})\}
\end{equation}
where
\begin{equation}\label{definition_boundary}
\gamma(x,t):=\left\{\begin{array}{ll} t-\hat{\lambda}(x+\delta))&\text{if}~x\leq -\delta\,, \\ 
t&\text{if}~|x|\leq\delta\,,\\ 
t+\hat{\lambda}(x-\delta))&\text{if}~x\geq\delta\,.\end{array}\right.
\end{equation}
for a constant $0<\hat{\lambda}\leq1$. Further we set 
\begin{equation*}
\Lambda_{t_{1}}^{t_{2}}:=\{(x,t)|~(x,t)=(x,\gamma(x,t_{1}))~\text{and}~t_{1}\leq t<t_{2} \}\,.
\end{equation*}

\begin{figure}[h]\label{gamma}
\begin{center}
\includegraphics{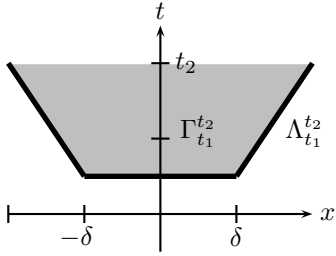}
\end{center}
\caption{The set $\Gamma_{t_1}^{t_2}$}
\end{figure}

As for mentioned we define now a notion of weak respective entropy solution on the domain $\Gamma_{t_{1}}^{t_{2}}$
\begin{definition}\label{def_entropy_solution}
For a $v_{1}\in L^{\infty}(\Lambda_{t_{1}}^{t_{2}})$ we say that $v\in L^{\infty}(\Gamma_{t_{1}}^{t_{2}})$ is weak solution of 
\begin{equation}\label{modified_scl}
 \left.\begin{array}{rclcl}
  \partial_{t}v+\partial_{x}f(x)&=&0&\mbox{in}&\Gamma_{t_{1}}^{t_{2}}\,,\\
v&=&v_{1}&\mbox{on}&\Lambda_{t_{1}}^{t_{2}}\,,
 \end{array}\right\}
\end{equation}
if for all $\psi\in C_{c}^{\infty}(\mathbb{R}\times [0,t_{2}))$
\begin{equation}\label{weak_formulation}
 \int_{\Gamma_{t_{1}}^{t_{2}}}v\partial_{t}\psi+f(v)\partial_{x}\psi\,dx\,dt+\int_{\Lambda_{t_{1}}^{t_{2}}}
\psi\begin{pmatrix} v_1 \\
 -f(v_1)\end{pmatrix}\cdot\tau\,d\sigma=0
\end{equation}
holds, where $\tau$ is the unit tangent vector of $\Lambda_{t_{1}}^{t_{2}}$.
Furthermore we say that $v\in L^{\infty}(\Gamma_{t_{1}}^{t_{2}})$ is an entropy solution of (\ref{modified_scl}), if $v$ additionally satisfies
\begin{equation*}
 q(x,t,a):=\partial_{t}v\wedge a+\partial_{x}f(v\wedge a)\in\mathcal{M}_{loc}\quad\mbox{and}\quad q(x,t,a)\geq0\,.
\end{equation*}
\end{definition}
A priory it is unclear if, for an arbitrary boundary condition $v_{1}\in L^{\infty}(\Lambda_{t_{1}}^{t_{2}})$, the conservation law (\ref{modified_scl}) possess a weak solution or not. We can however prove the following proposition.
\begin{proposition}\label{existence}
 Let $v_1\in L^{\infty}(\mathbb{R}\times[0,T))$ be a weak solution of (\ref{scl}). Then for all $0<\hat{\lambda}\leq1$ and for almost every $t_{1}\in(0,T)$ and all $t_2\in(t_1,T)$ the problem
\begin{equation*}
 \left.\begin{array}{rclcl}
  \partial_{t}v+\partial_{x}f(v)&=&0&\mbox{in}&\Gamma_{t_{1}}^{t_{2}}\,,\\
v&=&v_1&\mbox{on}&\Lambda_{t_{1}}^{t_{2}}\,,
 \end{array}\right\}
\end{equation*}
has an entropy solution in the sense of Definition \ref{def_entropy_solution}.
\end{proposition}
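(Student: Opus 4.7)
The plan is to construct the entropy solution on $\Gamma_{t_1}^{t_2}$ by vanishing-viscosity regularization, with Dirichlet data inherited from $v_1$ on the past boundary $\Lambda_{t_1}^{t_2}$. One might first hope simply to take the Kruzhkov Cauchy solution on $\mathbb R\times[t_1,T)$ with initial datum $v_1(\cdot,t_1)$ and restrict to $\Gamma_{t_1}^{t_2}$; however, an integration-by-parts computation shows that \eqref{weak_formulation} forces a Rankine--Hugoniot-type relation $v-v_1=\pm \hat\lambda(f(v)-f(v_1))$ along the slanted pieces of $\Lambda$, and this relation is not in general delivered by the Cauchy solution. One must therefore propagate boundary data along the full curve $\Lambda$.

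As a preliminary I fix $t_1\in(0,T)$ so that the trace $v_1(\cdot,t_1)\in L^\infty(\mathbb R)$ is well defined: since $v_1$ is a weak solution of \eqref{scl}, one has $\partial_t v_1=-\partial_x f(v_1)\in L^\infty_t W^{-1,\infty}_{x,\mathrm{loc}}$, hence $t\mapsto v_1(\cdot,t)$ is continuous into $W^{-1,\infty}_{\mathrm{loc}}$; combined with the $L^\infty$-bound and Lebesgue differentiation, strong $L^1_{\mathrm{loc}}$-limits exist for a.e.\ $t_1$. A similar trace argument provides a strong $L^\infty$-trace of $v_1$ on the full curve $\Lambda_{t_1}^{t_2}$ as soon as $\Lambda$ is non-characteristic, i.e.\ $\hat\lambda\,\|f'(v_1)\|_\infty<1$, which one arranges within the admitted range $0<\hat\lambda\leq 1$ after an appropriate parabolic rescaling.

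Next, for each $\varepsilon>0$ I would solve the quasilinear parabolic Dirichlet problem
\[
\partial_t v^\varepsilon+\partial_x f(v^\varepsilon)=\varepsilon\,\partial_{xx}v^\varepsilon\quad\text{in }\Gamma_{t_1}^{t_2},\qquad v^\varepsilon=v_1\quad\text{on }\Lambda_{t_1}^{t_2},
\]
by standard parabolic theory on the Lipschitz time-dependent domain $\Gamma_{t_1}^{t_2}$. The maximum principle yields a uniform $L^\infty$-bound in terms of $\|v_1\|_\infty$, and Oleinik's one-sided estimate, valid uniformly in $\varepsilon$ for strictly convex $f$, gives uniform $BV_{\mathrm{loc}}$-bounds on $v^\varepsilon$ away from $\Lambda$. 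This provides strong $L^1_{\mathrm{loc}}$-compactness along a subsequence $v^{\varepsilon_k}\to v$; testing the regularized equation against Kruzhkov-type entropies $\eta_a$ and passing to the limit delivers the sign condition $q(x,t,a)=\partial_t(v\wedge a)+\partial_x f(v\wedge a)\geq 0$, while the PDE $\partial_t v+\partial_x f(v)=0$ in $\Gamma_{t_1}^{t_2}$ is the distributional limit.

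The main obstacle I expect is identifying the trace of the limit $v$ on $\Lambda_{t_1}^{t_2}$ with $v_1$, so as to rule out a vanishing-viscosity boundary layer and recover the full weak formulation \eqref{weak_formulation}. This is the Bardos--LeRoux--N\'ed\'elec condition adapted to the trapezoidal boundary: on a non-characteristic ``inflow'' piece of $\Lambda$ it collapses for a convex scalar law to the pointwise trace equality $v=v_1$, and its verification via uniform boundary-layer estimates for $v^\varepsilon$ near $\Lambda$ is where the bulk of the technical work lies.
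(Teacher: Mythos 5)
Your route is genuinely different from the paper's.  The authors pass through the Hamilton--Jacobi level: by Conway--Hopf (Theorem \ref{correspondence}) they place $v_1=\partial_x g_1$ with $g_1\in W^{1,\infty}$ solving $\partial_t g+f(\partial_x g)=0$ almost everywhere; they build an explicit viscosity supersolution $\overline g$ by a barrier construction (using \eqref{infinity_behaviour}) with $\overline g=g_1$ on $\Lambda_{t_1}^{t_2}$ and $\overline g\geq g_1$; they then invoke Perron's method (Theorem 3.1 of Ishii) to produce a viscosity solution $g$ trapped between $g_1$ and $\overline g$.  The sandwich $g_1\leq g\leq\overline g$ with $\overline g=g_1$ on $\Lambda$ forces $g=g_1$ on $\Lambda$, so the boundary trace is attained \emph{by construction}, and then $v=\partial_x g$ satisfies \eqref{weak_formulation} after two integrations by parts and the Oleinik condition thanks to semiconcavity of viscosity solutions (Cannarsa--Sinestrari).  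The crucial point — identifying the trace on $\Lambda_{t_1}^{t_2}$ — is obtained essentially for free, without any non-characteristic hypothesis on $\Lambda$.

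In your vanishing-viscosity route, that identification is exactly where a genuine gap appears.  The Proposition is asserted for \emph{every} $0<\hat\lambda\leq1$, with no constraint relating $\hat\lambda$ to $\sup_{|a|\leq\|v_1\|_\infty}|f'(a)|$.  When $\hat\lambda\,\sup|f'|\geq1$ the slanted pieces of $\Lambda_{t_1}^{t_2}$ are characteristic or timelike, and the vanishing-viscosity limit of the Dirichlet problem satisfies the Bardos--LeRoux--N\'ed\'elec condition but does \emph{not}, in general, attain the prescribed boundary data: a boundary layer of width $O(\varepsilon)$ persists in the limit, and \eqref{weak_formulation} fails.  Your remark that one ``arranges'' non-characteristicity ``after an appropriate parabolic rescaling'' is incorrect: under $(x,t)\mapsto(\lambda x,t)$ the curve's slope becomes $\hat\lambda/\lambda$ while the flux becomes $\lambda f$, and under $(x,t)\mapsto(\lambda x,\lambda^2 t)$ the slope becomes $\lambda\hat\lambda$ while the flux becomes $\lambda^{-1}f$; either way the product $\hat\lambda\sup|f'|$ is a scaling invariant and cannot be dropped below $1$ by a change of variables.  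So the vanishing-viscosity argument, as proposed, only covers the sub-characteristic regime, whereas the statement (and the HJ proof) cover $\hat\lambda\leq1$ unconditionally.  Even in the spacelike regime, the claim that BLN collapses to $v=v_1$ on $\Lambda$ requires showing there is no boundary layer — for instance via a comparison argument using that $v_1$ itself is a weak solution of \eqref{scl} in a neighbourhood of $\Lambda$ — which you name but do not carry out.  A correct alternative along your lines would therefore need either to restrict $\hat\lambda$ (which is what Lemma \ref{properties_of_entropy_solutions} does later, but Proposition \ref{existence} does not) or to supply the boundary-layer exclusion in the characteristic and timelike cases, which is considerably harder than the Perron argument the paper uses.
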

The basic idea for proving Proposition \ref{existence} is to use the correspondence between weak solutions of (\ref{scl}) and viscosity subsolutions of
\begin{equation}\label{hamilton_jacobi}
\left. \begin{array}{rcl}
  \partial_t g+f(\partial_x g)&=&0\,,\\
g(x,0)&=&g_0(x)\,.
 \end{array}\right\}
\end{equation}
Before we are going to prove our assertion, we briefly repeat the definitions of viscosity sub- and supersolutions. We say that $g$ is a viscosity solution of (\ref{hamilton_jacobi}), if for any point $(x_0,t_0)\in\mathbb{R}\times(0,T)$ and for any $\psi \in C^1(\mathbb{R}^2)$ such that $g-\psi$ attains its maximum in $(x_0,t_0)$ the following inequality holds
\[\partial_t \psi(x_0,t_0)+f(\partial_x\psi(x_0,t_0))\leq0\,.\]
Similarly we say, that $g$ is a viscosity supersolution of (\ref{hamilton_jacobi}), if for any point $(x_0,t_0)\in\mathbb{R}\times(0,T)$ and for any for any $\psi \in C^1(\mathbb{R}^2)$ such that $g-\psi$ attains its minimum in $(x_0,t_0)$ the following inequality holds
\[\partial_t \psi(x_0,t_0)+f(\partial_x\psi(x_0,t_0))\geq0\,.\]
We say that $g$ is a viscosity solution of (\ref{hamilton_jacobi}), if $g$ is both a sub- and supersolution. Theorem 2 in \cite{CH} establishes a correspondence between weak solutions of (\ref{scl}) and viscosity subsolutions of (\ref{hamilton_jacobi}).
\begin{theorem}[Conway, Hopf]\label{correspondence}
Let $u\in L^{\infty}(\mathbb{R}\times[0,t))$ be a weak solution of (\ref{scl}). Then there exists a $g\in W^{1,\infty}(\mathbb{R}\times[0,T))$ which satisfies (\ref{hamilton_jacobi}) almost everywhere and is such that $u(x,t)=\partial_x g(x,t)$ and $u_0=\partial_x g(x,0)$ for almost every $x\in\mathbb{R}$.
\end{theorem}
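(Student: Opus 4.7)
The plan is to recognize the conservation law (\ref{scl}) as a divergence condition in space-time: $\partial_t u + \partial_x f(u) = 0$ says that the field $(f(u), u) \in L^\infty(\mathbb R \times [0,T); \mathbb R^2)$ is divergence free in the distributional sense. On the simply connected slab $\mathbb R \times [0, T)$ the dual $1$-form $u\,dx - f(u)\,dt$ is then closed, so I expect to construct a primitive $g$ satisfying $\partial_x g = u$ and $\partial_t g = -f(u)$. Because $u$ and $f(u)$ are bounded, such a $g$ is automatically Lipschitz and direct substitution yields
\begin{equation*}
\partial_t g + f(\partial_x g) = -f(u) + f(u) = 0 \quad \text{a.e.},
\end{equation*}
which is exactly (\ref{hamilton_jacobi}).

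To make the primitive rigorous I would mollify. Let $\rho_\varepsilon$ be a standard space-time mollifier and, on the interior strip $\mathbb R \times (\varepsilon, T - \varepsilon)$, put $u^\varepsilon := u * \rho_\varepsilon$ and $v^\varepsilon := f(u) * \rho_\varepsilon$; convolution commutes with differentiation, so $\partial_t u^\varepsilon + \partial_x v^\varepsilon = 0$ pointwise. Fix a reference point $(x_0, t_0)$ with $t_0 > \varepsilon$ and set
\begin{equation*}
g^\varepsilon(x,t) := \int_{x_0}^{x} u^\varepsilon(s, t_0)\,ds - \int_{t_0}^{t} v^\varepsilon(x, \tau)\,d\tau.
\end{equation*}
Direct differentiation together with the mollified equation gives $\partial_t g^\varepsilon = -v^\varepsilon$ and $\partial_x g^\varepsilon(x, t) = u^\varepsilon(x, t_0) + \int_{t_0}^t \partial_\tau u^\varepsilon(x, \tau)\,d\tau = u^\varepsilon(x, t)$. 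The uniform bounds $\|\partial_x g^\varepsilon\|_\infty \le \|u\|_\infty$ and $\|\partial_t g^\varepsilon\|_\infty \le \sup_{|s| \le \|u\|_\infty} |f(s)|$, together with the normalization $g^\varepsilon(x_0, t_0) = 0$, produce a uniformly Lipschitz family. Arzel\`a-Ascoli then delivers a subsequential locally uniform limit $g \in W^{1,\infty}(\mathbb R \times (0, T))$ satisfying $\partial_x g = u$ and $\partial_t g = -f(u)$ almost everywhere, and therefore (\ref{hamilton_jacobi}) almost everywhere.

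It remains to extend $g$ to the initial line and identify the trace $u_0 = \partial_x g(\cdot, 0)$. The uniform Lipschitz constants extend $g$ continuously to $\mathbb R \times [0, T)$. For $\chi \in C_c^\infty(\mathbb R)$ write $F(t) := \int u(x,t)\chi(x)\,dx$; the weak formulation of (\ref{scl}), tested against $\chi(x)\varphi(t)$ with $\varphi \in C_c^\infty([0, T))$, shows that $F$ admits a continuous representative with $F(0) = \int u_0 \chi\,dx$. On the other hand, since $\partial_x g = u$, integration by parts gives $F(t) = -\int g(x,t)\chi'(x)\,dx$, and the right-hand side converges as $t \to 0^+$ to $-\int g(x, 0)\chi'(x)\,dx$ by continuity of $g$. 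Equating the two limits yields $\partial_x g(\cdot, 0) = u_0$ a.e. The main technical obstacle is precisely this coordinated passage to the limits $\varepsilon \to 0$ and $t \to 0^+$, together with the verification that distinct reference points $(x_0, t_0)$ produce the same primitive up to an additive constant; both follow from the distributional closedness of $u\,dx - f(u)\,dt$ on the simply connected strip.
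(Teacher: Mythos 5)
The paper does not prove this theorem; it simply quotes it as Theorem~2 of Conway--Hopf \cite{CH}, so there is no in-paper argument to compare against. Your construction is correct and is in fact the classical potential argument underlying the cited result: since $\partial_t u + \partial_x f(u)=0$ in $\mathcal D'$, the $1$-form $u\,dx - f(u)\,dt$ is closed on the simply connected slab, and your mollified line-integral primitive $g^\varepsilon$ satisfies $\partial_x g^\varepsilon = u^\varepsilon$, $\partial_t g^\varepsilon = -v^\varepsilon$ with bounds $\|u^\varepsilon\|_\infty \le \|u\|_\infty$, $\|v^\varepsilon\|_\infty \le \sup_{|s|\le\|u\|_\infty}|f(s)|$ that are uniform in $\varepsilon$. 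The Arzel\`a--Ascoli limit $g$ then has (distributional, hence by Rademacher a.e.\ classical) derivatives $\partial_x g = u$ and $\partial_t g = -f(u)$, so the Hamilton--Jacobi equation (\ref{hamilton_jacobi}) holds a.e.\ by substitution. The boundary identification is also handled correctly: testing the weak formulation of (\ref{scl}) against $\chi(x)\varphi(t)$ with $\varphi(0)\neq 0$ gives the Lipschitz-in-$t$ representative of $F(t)=\int u(\cdot,t)\chi$ with $F(0^+)=\int u_0\chi$, and continuity of $g$ up to $t=0$ gives $F(0^+)=-\int g(\cdot,0)\chi'$, so $\partial_x g(\cdot,0)=u_0$ a.e. Only two small points deserve a remark: one should extract a sequence $\varepsilon_n\to 0$ so that $g^{\varepsilon_n}$ converges locally uniformly (you already say ``subsequential''), and the reference point $(x_0,t_0)$ must be kept fixed with $\varepsilon<\min\{t_0,T-t_0\}$ so that the normalization $g^\varepsilon(x_0,t_0)=0$ is meaningful for all small $\varepsilon$ along the sequence. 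Neither is a gap; the proof as written is complete.
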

\begin{proof}[\textbf{Proof of Proposition \ref{existence}}]
Let $v_1\in L^{\infty}(\mathbb{R}\times[0,T))$ be a weak solution of (\ref{scl}); then, according to Theorem \ref{correspondence}, there exists  $g_1\in W^{1,\infty}(\mathbb{R}\times[0,T))$, which solves (\ref{hamilton_jacobi}) almost everywhere. By Fubini's Theorem we can choose $t_1$ such that both $\partial_t g_1$ and $\partial_x g_1$ are in $L^\infty(\Lambda_{t_1}^{t_2})$ and
such that
\begin{equation}\label{choice_of_t1}
 \int_{\Lambda_{t_1}^{t_2}}\partial_{t}g_1+f(\partial_x g_1)\,d\sigma=0\quad\mbox{and}\quad v_1=\partial_x g_1\quad\mbox{a.e. on}\quad\Lambda_{t_1}^{t_2}\,.
\end{equation}
For $t_1<t_2<T$ we want to show, that there exists a viscosity solution $g$ of
\begin{equation}\label{viscosity_problem}
 \left.\begin{array}{rclcl}
        \partial_t g+f(\partial_x g)&=&0&\mbox{in}&\Gamma_{t_1}^{t_2}\,,\\
g&=&g_1&\mbox{on}&\Lambda_{t_1}^{t_2}\,.
\end{array}\right\}
\end{equation}
Then we claim, that $v=\partial_x g$ is an entropy solution of (\ref{modified_scl}), in the sense of Definition \ref{def_entropy_solution}. The existence of such a viscosity solution $g$ will be guaranteed by the existence result of Ishi (see Theorem 3.1 in \cite{Is}). In order to be able to apply that theorem we must find a  viscosity subsolution $\underline{g}$ and a viscosity supersolution $\overline{g}$ of (\ref{viscosity_problem}), which satisfy pointwise $\underline{g}=\overline{g}=g_1$ on $\Lambda_{t_1}^{t_2}$ and $\underline{g}\leq\overline{g}$ in $\Gamma_{t_1}^{t_2}$. According to Proposition 5.1 on page 77 in \cite{BC}, the fact that $g_1$ satisfies (\ref{hamilton_jacobi}) almost everywhere implies, that $g_1$ is a viscosity subsolution of (\ref{hamilton_jacobi}). Thus we can put $\underline{g}=g_1$ and it remains to find a viscosity supersolution $\overline{g}$ such that $\overline{g}\geq g_1$ and $\overline{g}=g_1$ on $\Lambda_{t_{1}}^{t_2}$. For two positive constants $A$, $B$ we consider the function
\[ \overline{g}_y(x,t)=g_1(y,\gamma(y,t_1))+A|x-y|+B|t-\gamma(y)|\,.\]
We calculate for $(x,t)\in\Gamma_{t_1}^{t_2}$
\[ \partial_t\overline{g}_y(x,t)+f(\partial_x \overline{g}_y(x,t))=B\operatorname{sign}(t-\gamma(y))+f(a \operatorname{sign}(x-y))\,.\]
By (\ref{infinity_behaviour}) this is positive, if we choose $A$ large enough. Thus
\[\partial_t\overline{g}_y(x,t)+f(\partial_x \overline{g}_y(x,t))>0\quad\mbox{for}\quad (x,t)\in\Gamma_{t_1}^{t_2}\,.\]
Proposition 5.1 on page 77 and Proposition 5.4 on page 78 in \cite{BC} imply, that $\overline{g}$ is a viscosity supersolution. Further we notice, since $g_1\in W^{1,\infty}(\mathbb{R}\times[0,T))$, that for all $y$ and suitable choices of $A$ and $B$
\[g_1(x,t)\leq g(y,\gamma(y))+A|x-y|+B|t-\gamma(y)|\,.\] 
By Proposition 2.11 on page 302 in \cite{BC} 
\[\overline{g}(x,t)=\inf_{y}\overline{g}_y(x,t)\]
is still a supersolution. Furthermore $\overline{w}$ satisfies by construction $\overline{g}=g_1$ on $\Lambda_{t_{1}}^{t_{2}}$ and $\overline{g}\geq g$ in $\Gamma_{t_{1}}^{t_{2}}$. Hence all assumptions of the existence result (Theorem 3.1) in \cite{Is} are fulfilled. Therefore there exists a viscosity solution $g$ of (\ref{viscosity_problem}) such that $g_1\leq g\leq\overline{g}$. By Example 1 in \cite{Is}, the viscosity solution is Lipschitz continuous, i.e. $g\in W^{1,\infty}(\Gamma_{t_1}^{t_2})$. For $(x,t)\in\Gamma_{t_1}^{t_2}$ and $(y,s)\in\Lambda_{t_1}^{t_2}$ we notice
that
\[g_1(x,t)-\overline{g}(y,s)\leq g(x,t)-g(y,s)\leq \overline{g}(x,t)-g_1(y,s)\,.\]
Using the fact that $g_1$ is Lipschitz continuous and the construction of $\overline{g}$ we deduce from the previous line
\[-\|(x,t)-(y,s)\|C_1\leq g(x,t)-g(y,s)\leq C_2\|(x,t)-(y,s)\|\,,\]
which means $g\in W^{1,\infty}(\Gamma_{t_1}^{t_2}\cup \Lambda_{t_1}^{t_2})$. 

Next we are going to show, that $v=\partial_x g$ is a weak solution of (\ref{modified_scl}) in $\Gamma_{t_1}^{t_2}$ in the sense of Definition \ref{def_entropy_solution}. Since $g$ satisfies (\ref{viscosity_problem}) almost everywhere, it follows for a $\psi\in C^{\infty}_c(\mathbb{R}\times[0,t_2))$
\begin{equation}\label{int1}
\int_{\Gamma_{t_1}^{t_2}}\partial_x\psi\partial_{t}g+f(\partial_xg)\partial_{x}\psi\,dx\,dt=0\,.
\end{equation}
We denote the outer unit normal vector of $\Gamma_{t_1}^{t_2}$ by $n$. Integrating (\ref{int1}) twice by parts gives
\begin{equation}\label{int2}\begin{split}
\int_{\Gamma_{t_1}^{t_2}}\partial_x\psi\partial_{t}g\,dx\,dt&=\int_{\partial\Gamma_{t_1}^{t_2}}g\begin{pmatrix}-\partial_t\psi\\\partial_x\psi\end{pmatrix}\cdot n\,d\sigma+ \int_{\Gamma_{t_1}^{t_2}}\partial_t\psi\partial_{x}g\,dx\,dt\,.
\end{split}\end{equation}
Rewriting the boundary term in (\ref{int2}) and using the fact that $\psi(x,t_2)=0$ leads to
\begin{multline}\label{int3}
\int_{\partial\Gamma_{t_1}^{t_2}}g\begin{pmatrix}-\partial_t\psi\\\partial_x\psi\end{pmatrix}\cdot n\,d\sigma=\int_{\Lambda_{t_1}^{t_2}}g_1\begin{pmatrix}-\partial_t\psi\\\partial_x\psi\end{pmatrix}\cdot n\,d\sigma \\=\int_{s_1}^{s_2} g_1(s,\gamma(s,t_1))\left[\partial_t\psi(s,\gamma(s,t_1))\gamma'(s,t_1)+\partial_x\psi(s,\gamma(s,t_{1}))\right]\,ds\,.
\end{multline}
Integrating the right-hand side of (\ref{int3}) by parts leads to 
\begin{multline}\label{int4}
 \int_{s_1}^{s_2} g_1(s,\gamma(s,t_1))\left[\partial_t\psi(s,\gamma(s,t_1))\partial\gamma(s,t_1)+\partial_x\psi(s,\gamma(s,t_{1}))\right]\,ds\\
=\int_{s_1}^{s_2} g_1(s,\gamma(s,t_1))\frac{d}{ds}\psi(s,\gamma(s,t_1))\,ds=-\int_{s_1}^{s_2}\frac{d}{ds}g_1(s,\gamma(s,t_1))\cdot\psi\,ds\,.
\end{multline}
Therefore, combining (\ref{int3}) and (\ref{int4}) we can rewrite the boundary term in (\ref{int2})
\begin{equation}\label{int5}
\int_{\partial\Gamma_{t_1}^{t_2}}g_1\begin{pmatrix}-\partial_t\psi\\\partial_x\phi\end{pmatrix}\cdot n\,d\sigma=\int_{s_1}^{s_2}\left[\partial_x g_1+\partial_{t}g_1\cdot \partial_s\gamma(s,t_1)\right]\psi\,ds
\end{equation}
Using (\ref{choice_of_t1}), the right-hand side of (\ref{int5}) simplifies to
\begin{align*}
\int_{\partial\Gamma_{t_1}^{t_2}}g_1\begin{pmatrix}-\partial_t\psi\\\partial_x\psi\end{pmatrix}\cdot n\,d\sigma&=\int_{s_1}^{s_2}\left[\partial_x g_1-f(\partial_x g_1)\cdot \partial_s\gamma(s,t_1)\right]\psi\,ds\\
&=\int_{\Lambda_{t_1}^{t_2}}\psi\begin{pmatrix}\partial_x g_1\\-f(\partial_x g_1)\end{pmatrix}\cdot \tau\,d\sigma\,,
\end{align*}
where $\tau$ is the unit tangent vector of $\Lambda_{t_1}^{t_2}$. We replace now the boundary term in (\ref{int2}) using the above identity
\[\int_{\Gamma_{t_1}^{t_2}}\partial_x\psi\partial_{t}g\,dx\,dt=\int_{\Lambda_{t_1}^{t_2}}\psi\begin{pmatrix}\partial_x g_1\\-f(\partial_x g_1)\end{pmatrix}\cdot \tau\,d\sigma+ \int_{\Gamma_{t_1}^{t_2}}\partial_t\psi\partial_{x}g\,dx\,dt\,.\]
Finally, this together with (\ref{int1}) gives
\[\int_{\Lambda_{t_1}^{t_2}}\psi\begin{pmatrix}\partial_x g_1\\-f(\partial_x g_1)\end{pmatrix}\cdot \tau\,d\sigma+ \int_{\Gamma_{t_1}^{t_2}}\partial_t\psi\partial_{x}g+f(\partial_xg)\partial_{x}\psi\,dx\,dt=0\,.\]
Since $v_1=\partial_x g_1$ and by putting $v=\partial_x g$, we see, that $v$ is a solution of (\ref{modified_scl}) in the sense of Definition \ref{def_entropy_solution}.
Finally it remains to show, that $v$ is an entropy solution in the sense, that \[\partial_tv\wedge a+\partial_xf(v\wedge a)\geq0\,.\]
By Corollary 1.7.2 in \cite{CS} $v$ satisfies for all $(x,t),~(y,t)\in\Gamma_{t_1}^{t_2}$ such that $x<y$ 
\[v(y,t)-v(x,t)\leq \frac{y-x}{ct}\,.\]
This immediately implies $q(x,t,a)\geq0$ (see Section 8.5 in \cite{Da}).
\end{proof}

Proposition \ref{existence} being proved, we now establish some properties for entropy solutions to (\ref{modified_scl}) analogous to those in the classical case (see \cite{Da}). Precisely we are going to show
\begin{lemma}\label{properties_of_entropy_solutions}
Let $v_1\in L^{\infty}(\mathbb{R}\times(0,T))$ be a weak solution of (\ref{scl}). Then there exists a constant $\lambda_0>0$, depending on $f$ and $\|v_1\|_{\infty}$, such that, for any domain $\Gamma_{t_1}^{t_2}$ satisfying $0<\hat{\lambda}\leq \lambda_0$, the entropy solution $v\in L^{\infty}(\Gamma_{t_{1}}^{t_{2}})$ of (\ref{modified_scl}) with boundary condition $v_{1}\in L^{\infty}(\Lambda_{t_{1}}^{t_{2}})$ satisfies
\begin{equation}\label{boundary_asymptotic}
\lim_{\varepsilon\to0^+}\int_{s_1}^{s_2-\varepsilon}\left|v(s,\gamma(s,t_1+\varepsilon))-v_1(s,\gamma(s,t_1))\right|\,ds=0\,,
\end{equation}
where \[s_1=-\frac{t_2-t_1}{\hat{\lambda}}-\delta\quad\mbox{and}\quad s_2=\frac{t_2-t_1}{\hat{\lambda}}+\delta\,.\]
Moreover
\begin{equation}\label{Linfty_bound}
             \|v\|_{\infty}\leq \|v_{1}\|_{\infty}\,
\end{equation}
and there exists a constant $C>0$, depending only on $\|v\|_1$ and $\hat{\lambda}$, such that
\begin{equation}\label{control_of_the_defect_measure}
              \int_{\Gamma_{t_{1}}^{t_{2}}}q(x,t,a)\, da\, dx\, dt\leq C(t_{2}-t_{1})\,.
\end{equation}
Let now $w_1,\,w_2\in L^{\infty}(\mathbb{R}\times(0,T))$ be weak solutions of (\ref{scl}). Then there there exists a constant $\lambda_1>0$ depending on $f$ and $\max\{\|w_1\|_{\infty},\|w_2\|_{\infty}\}$ such that, for any domain $\Gamma_{t_1}^{t_2}$ satisfying $0<\hat{\lambda}\leq \lambda_1$ and any choice of two entropy solutions respectively $v_1\in L^{\infty}(\Gamma_{t_{1}}^{t_{2}})$ with boundary condition $w_{1}\in L^{\infty}(\Lambda_{t_{1}}^{t_{2}})$ and $v_2\in L^{\infty}(\Gamma_{t_{1}}^{t_{2}})$  with boundary condition $w_2\in L^{\infty}(\Lambda_{t_{1}}^{t_{2}})$ 
the following holds :  for any $t\in(t_1,t_2)$ and a constant $C>0$ depending on $\Gamma_{t_1}^{t_2}$ and $\max\{\|w_1\|_{\infty},\|w_2\|_{\infty}\}$:
\begin{equation}\label{contraction_inequality}
 \int_{\theta^{-}(t)}^{\theta^{+}(t)}|v_1(x,t)-v_2(x,t)|\, dx\leq C\int_{\Lambda_{t_{1}}^{t_{2}}}|w_1-w_2|\, d\sigma\,,
\end{equation}
where \[\theta^{\pm}(t)=\pm\frac{t-t_1}{\hat{\lambda}}\pm\delta\,.\]
\end{lemma}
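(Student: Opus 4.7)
The statement gathers four independent assertions about the entropy solution $v$ on the trapezoidal domain $\Gamma_{t_1}^{t_2}$, which I plan to treat in the stated order. The common feature is that each is a boundary-adapted analogue of a classical fact for entropy solutions on a strip $\mathbb{R}\times[t_1,t_2]$ (cf.\ Chapter 6 of \cite{Da}); the role of the smallness condition $\hat\lambda\leq\lambda_0$ (resp.\ $\lambda_1$) is to guarantee that the slanted sides of $\Lambda_{t_1}^{t_2}$ are \emph{non-characteristic}, i.e.\ $|f'(v)|\hat\lambda<1$ for every value of $v$ arising in the problem, so that the Cauchy-type problem with data on $\Lambda_{t_1}^{t_2}$ is well-posed. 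Once this compatibility is available, each claim reduces to a boundary-adapted version of its classical counterpart.

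The first two assertions I would handle via the usual Kruzhkov entropies. Setting $M:=\|v_1\|_\infty$ and choosing $\lambda_0<1/\|f'\|_{L^\infty(-M,M)}$, the entropy $\eta_M(v)=(v-M)^+$ satisfies $\partial_t\eta_M(v)+\partial_x\xi_M(v)=-q(x,t,M)\leq 0$ on $\Gamma_{t_1}^{t_2}$ and vanishes on $\Lambda_{t_1}^{t_2}$; the comparison principle derived from Kruzhkov's doubling of variables, adapted to the non-characteristic geometry, then forces $\eta_M(v)\equiv 0$, and symmetrically $(v+M)^-\equiv 0$, yielding (\ref{Linfty_bound}). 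For (\ref{boundary_asymptotic}) I would follow a Bardos--Leroux--N\'ed\'elec-type argument: the weak formulation (\ref{weak_formulation}) combined with the sign of $q$ and the non-characteristic assumption shows that $v$ admits a strong $L^1$-trace on $\Lambda_{t_1}^{t_2}$, which must coincide with $v_1$.

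To control the total defect measure (\ref{control_of_the_defect_measure}), the idea is to collapse the one-parameter family $q(\cdot,\cdot,a)$ into a single entropy dissipation. For $N\geq\|v\|_\infty$ a direct computation gives $\int_{-N}^N(v\wedge a)\,da=-(N-v)^2/2$ and $\int_{-N}^N f(v\wedge a)\,da=\Phi(v)$, where $\Phi(v)=\int_{-N}^{v}f(a)\,da+(N-v)f(v)$ is explicit and bounded, so that
\begin{equation*}
\int_{-N}^N q(x,t,a)\,da=-\partial_t\!\left(\tfrac{(N-v)^2}{2}\right)+\partial_x\Phi(v)
\end{equation*}
is a non-negative Radon measure on $\Gamma_{t_1}^{t_2}$. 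Testing this identity against an approximation of $\mathbf{1}_{\Gamma_{t_1}^{t_2}}$ and applying the divergence theorem reduces the left-hand side to a boundary integral on $\partial\Gamma_{t_1}^{t_2}$; the horizontal top (at $t=t_2$) and horizontal bottom (on $|x|\leq\delta$, $t=t_1$) pieces combine into $\int[\tfrac12(N-v(\cdot,t_2))^2-\tfrac12(N-v_1(\cdot,t_1))^2]\,dx$ plus a term of size $O((t_2-t_1)/\hat\lambda)$, which is itself $O(t_2-t_1)$ thanks to the $L^1$-in-time continuity of entropy solutions; the slanted sides have length $O(t_2-t_1)$ and a bounded integrand, giving the bound.

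The $L^1$-contraction (\ref{contraction_inequality}) is the most delicate step, and my plan is to carry out Kruzhkov's doubling-of-variables scheme for the pair $(v_1,v_2)$ on $\Gamma_{t_1}^{t_2}$. Provided $\hat\lambda\leq\lambda_1<1/\|f'\|_{L^\infty(-M,M)}$ with $M=\max(\|w_1\|_\infty,\|w_2\|_\infty)$, the doubling argument produces the distributional inequality
\begin{equation*}
\partial_t|v_1-v_2|+\partial_x\bigl[\operatorname{sign}(v_1-v_2)(f(v_1)-f(v_2))\bigr]\leq 0\quad\text{on }\Gamma_{t_1}^{t_2}.
\end{equation*}
Integrating over $\Gamma_{t_1}^{t}$ for $t\in(t_1,t_2)$, using the traces obtained in the first step, and applying the divergence theorem gives
\begin{equation*}
\int_{\theta^-(t)}^{\theta^+(t)}\!\!|v_1-v_2|(x,t)\,dx\leq -\!\int_{\Lambda_{t_1}^{t_2}\cap\{t'\leq t\}}\!\!\bigl[|w_1-w_2|n_t+\operatorname{sign}(w_1-w_2)(f(w_1)-f(w_2))n_x\bigr]d\sigma,
\end{equation*}
and on the slanted parts of $\Lambda_{t_1}^{t_2}$ the non-characteristic bound $\|f'\|_{L^\infty(-M,M)}|n_x|<|n_t|$ shows that the right-hand integrand is controlled in modulus by $C(\hat\lambda,M)|w_1-w_2|$, producing (\ref{contraction_inequality}). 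The main obstacle throughout is precisely the justification that Kruzhkov's classical entropy inequalities and doubling-of-variables schemes remain valid up to the slanted, non-characteristic boundary $\Lambda_{t_1}^{t_2}$: it is exactly this requirement that forces the smallness conditions $\lambda_0$ and $\lambda_1$.
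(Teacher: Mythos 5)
Your outline is broadly sound, and the $L^\infty$-bound via $\eta_M(v)=(v-M)^+$, the collapsed-entropy computation $\int_{-N}^N q\,da=-\partial_t\big(\tfrac{(N-v)^2}{2}\big)+\partial_x\Phi(v)$, and the use of the non-characteristic condition $\hat\lambda\|f'\|_\infty<1$ to control the slanted-boundary fluxes are all correct and agree in substance with the paper. The route you take is nevertheless genuinely different from the paper's in its technical core. You propose to establish the interior Kruzhkov inequality by doubling of variables and then pass to the boundary via a Bardos--Leroux--N\'ed\'elec-type trace lemma; what remains unexecuted, and is precisely the delicate step, is showing that the doubling scheme and the entropy inequality survive all the way up to a slanted $C^0$ boundary, together with the full identification of the strong trace with the prescribed datum $v_1$ on the inflow part of $\Lambda_{t_1}^{t_2}$. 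The paper sidesteps exactly this: it works with the kinetic formulation $\partial_t\chi+f'(a)\partial_x\chi=\partial_a q$, convolves with a mollifier $\varphi_\varepsilon(x,t)=\varepsilon^{-2}\varphi_1(x/\varepsilon)\varphi_2(t/\varepsilon)$ whose time factor is supported in $[-1,0]$, so that the mollified balance law holds classically on the \emph{shrunk} trapezoid $\Gamma_{t_1+C\varepsilon}^{t_2-C\varepsilon}$ without any boundary interaction, integrates there, and only then lets $\varepsilon\to0^+$, at which point the boundary terms emerge from the weak formulation (\ref{weak_formulation}) automatically. In particular the contraction (\ref{contraction_inequality}) is derived not by doubling but by computing the equation satisfied by $(\chi_1^\varepsilon-\chi_2^\varepsilon)^2$, using the sign of the cross terms $\int\chi_i^\varepsilon\partial_a q_j^\varepsilon\,da\geq0$ for $i\neq j$, and showing that the self-interaction terms vanish in the limit via the algebraic identity $\chi_i=\chi_i^2$. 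Your doubling-plus-trace program should work if you supply a trapezoid-adapted BLN lemma, but it is strictly heavier than what the paper needs; the paper's one-sided mollification buys you the boundary estimates for free, and this is its main advantage over the classical Kruzhkov strategy you outline.
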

\begin{remark}
Inequality (\ref{contraction_inequality}) implies in particular the uniqueness of the entropy solution for a given initial data $w$ on $\Lambda_{t_1}^{t_2}$ issued from a weak solution to (\ref{scl}).
\end{remark}
\begin{proof}[\textbf{Proof of Lemma \ref{properties_of_entropy_solutions}}]
We start to prove (\ref{boundary_asymptotic}). Let $R>0$ such that
\[R+f(\pm R)\geq0\,.\]
We choose $\lambda_0$ such that
\begin{equation}\label{def_lambda0}
\lambda_0^{-1}=\max\left\{\left|f'(R+1+\|v_1\|_{\infty})\right|,\left|f'(-R-1-\|v_1\|_{\infty})\right|\right\}\,.
\end{equation}
We consider now a domain $\Gamma_{t_1}^{t_2}$ such that $\hat{\lambda}\leq\lambda_0$ and an entropy solution $v\in L^{\infty}(\Gamma_{t_1}^{t_2})$ of (\ref{modified_scl}) exists.
From Example 1 in \cite{Is}, we know, that
\begin{equation}\label{bound1}
\|v\|_{\infty}\leq R+1\,.
\end{equation}
Let $\psi\in C^{\infty}_c(\mathbb{R}\times[0,t_2))$. From Theorem 1.3.4 in \cite{Da} we get for all $\varepsilon>0$ and sufficiently small ($t_1+\varepsilon<t_2$)
\begin{equation*}
\int_{\Gamma_{t_1+\varepsilon}^{t_2-\varepsilon}}v\partial_t\psi+f(v)\partial_x\psi\,dx\,dt=-\int_{\partial\Gamma_{t_1+\varepsilon}^{t_2-\varepsilon}}\begin{pmatrix}v\\ -f(v)\end{pmatrix}\cdot\tau\psi\,d\sigma\,.
\end{equation*}
Since $\psi(x,t_2)=0$ this implies
\begin{equation}\label{bound_asymp1}
\int_{\Gamma_{t_1+\varepsilon}^{t_2-\varepsilon}}v\partial_t\psi+f(v)\partial_x\psi\,dx\,dt=-\int_{\Lambda_{t_1+\varepsilon}^{t_2-\varepsilon}}\begin{pmatrix}v\\ -f(v)\end{pmatrix}\cdot\tau\psi\,d\sigma\,.
\end{equation}
As $\varepsilon\to0^+$ the left-hand side of (\ref{bound_asymp1}) converges to
\[\int_{\Gamma_{t_1}^{t_2}}v\partial_t\psi+f(v)\partial_x\psi\,dx\,dt\,.\]
Since $v$ is a weak solution of (\ref{modified_scl}) this later fact implies for the right-hand side of (\ref{bound_asymp1})
\begin{equation}\label{bound_asymp2}
\lim_{\varepsilon\to0^+}\int_{\Lambda_{t_1+\varepsilon}^{t_2-\varepsilon}}\begin{pmatrix}v\\ -f(v)\end{pmatrix}\cdot\tau\psi\,d\sigma=\int_{\Lambda_{t_1}^{t_2}}\begin{pmatrix}v_1\\ -f(v_1)\end{pmatrix}\cdot\tau\psi\,d\sigma
\end{equation}
In order to keep the notation simple we introduce
\begin{equation*}
\bar{\gamma}(s)=\begin{pmatrix}s\\\gamma(s,t_1)\end{pmatrix}\quad\mbox{and}\quad v_{\varepsilon}(x,t)=v(x,t+\varepsilon)\,.
\end{equation*}
From (\ref{bound_asymp2}) we deduce
\begin{equation}\label{bound_asymp3}
\lim_{\varepsilon\to0^+}\int_{s_1+\varepsilon}^{s_2-\varepsilon}\left\{v_{1}(\bar{\gamma}(s))-v_{\varepsilon}(\bar{\gamma}(s))+\hat{\lambda}\left[f(v_{\varepsilon}(\bar{\gamma}(s)))-f(v_1(\bar{\gamma}(s))) \right]\right\}\psi\,ds\,.
\end{equation}
By (\ref{def_lambda0}) we obtain the existence of some constants $C,\,c>0$ for which the following holds
\[ c\leq 1\pm f(\alpha)\leq C\quad\mbox{for all}\quad\alpha\in\left(-R-1-\|v_1\|_{\infty},R+1+\|v\|_{\infty}\right)\,.\]
Therefore we get from (\ref{bound_asymp3}), that
\begin{equation}\label{bound_asymp4}
\lim_{\varepsilon\to0^+}v(s,\gamma(s,t_1+\varepsilon))=v_1(s,\gamma(s))\quad\mbox{for a.e.}\quad s\in[s_1,s_2]\,.
\end{equation}
By dominated convergence, we deduce the claim (\ref{boundary_asymptotic}).

To prove the remaining claims of our lemma, we need to introduce the kinetic formulation of conservation laws, we recommend the introduction to this subject given in \cite{Pe}. However we need here a slight modified version of this formulation. We define for any $v\in\mathbb{R}$
\begin{equation*}
 \chi(v;a):=\mathbbm{1}_{a\leq v}\, ,
\end{equation*}
where $\mathbbm{1}_{a\leq v}$ is the characteristic function of the set $\{a\in\mathbb{R}:~a\leq v\}$. Then a weak solution $v\in L^{\infty}(\Gamma_{t_{1}}^{t_{2}})$ of (\ref{modified_scl}) satisfies in the distributional sense 
\begin{equation}\label{kinetic_formulation}
\left.\begin{array}{rclcl}
\partial_{t}\chi(v(x,t);a)+f'(a)\partial_{x}\chi(v(x,t);a)&=&\partial_{a}q(x,t,a)&\mbox{in}&\Gamma_{t_1}^{t_2}\,,\\
\chi(v;a)&=&\chi(v_1;a)&\mbox{on}&\Lambda_{t_1}^{t_2}\,.
\end{array}\right\}
\end{equation}
In other words this means, that for all $\psi\in C_c^{\infty}(\mathbb{R}\times[0,t_2)\times\mathbb{R})$
\begin{multline}
 \int_{\Gamma_{t_1}^{t_2}}\int_{\mathbb{R}}\chi(v;a)\partial_t\psi+f'(a)\chi(v;a)\partial_x\psi\,da\,dx\,dt\\=\int_{\Gamma_{t_1}^{t_2}}+\int_{\Lambda_{t_1}^{t_2}}\int_{\mathbb{R}}\psi\begin{pmatrix}\chi(v1;a)\\-f'(a)\chi(v_1;a)\end{pmatrix}\cdot\tau\,da\,d\sigma\,.
\end{multline}
In order to prove (\ref{contraction_inequality}), (\ref{Linfty_bound}) and (\ref{control_of_the_defect_measure}) we need to regularize our kinetic equation (\ref{kinetic_formulation}). We choose $\varphi_{1}(x),~\varphi_{2}(t)\in C^{\infty}_{c}(\mathbb{R})$ non-negative functions such that
\[\mbox{supp}~\varphi_1\subset(-1,1)\,,\quad\mbox{supp}~\varphi_{2}\subset[-1,0]\]and\[\int_{\mathbb{R}}\varphi_{2}\,dx=\int_{\mathbb{R}}\varphi_{1}\,dx=1\,.\]
We define the kernel
\begin{equation}\label{kernel}
\varphi_{\varepsilon}(x,t)=\frac{1}{\varepsilon^2}\varphi_1\left(\frac{x}{\varepsilon}\right)\varphi_2\left(\frac{t}{\varepsilon}\right)\,.
\end{equation}
For a constant $C$ depending only from $\hat{\lambda}$ we have 
\begin{equation*}
\operatorname{dist}((x,t),\partial\Gamma_{t_1}^{t_2})>\varepsilon\quad\mbox{for all}\quad(x,t)\in\Gamma_{t_1+C\varepsilon}^{t_2-C\varepsilon}\,.
\end{equation*}
Consequently for $(x,t)\in\Gamma_{t_{1}+C\varepsilon}^{t_{2}-C\varepsilon}$
\begin{equation}\label{zero_on_boundary}
 \varphi_{\varepsilon}(x-y,t-s)=0\quad\mbox{for}\quad(y,s)\in\partial\Gamma_{t_{1}}^{t_{2}}\,.
\end{equation}
We define moreover the two mollified functions
\begin{equation*}
 \chi_{\varepsilon}(x,t,a)=\int_{\Gamma_{t_{1}}^{t_{2}}}\varphi_{\varepsilon}(x-y,t-s)\chi(v(y,s);a)\,dy\,ds
\end{equation*}
and 
\begin{equation*}
 q_{\varepsilon}(x,t,a)=\int_{\Gamma_{t_{1}}^{t_{2}}}\varphi_{\varepsilon}(x-y,t-s)q(y,s,a)\,dy\,ds\,.
\end{equation*}
For $q_{\varepsilon}$ and $(x,t)\in\Gamma_{t_1+C\varepsilon}^{t_2-C\varepsilon}$ we compute
\begin{align*}
q_{\varepsilon}(x,t,a)- q_{\varepsilon}(x,t,b)&=\int_{\Gamma_{t_{1}}^{t_{2}}}\varphi_{\varepsilon}(x-y,t-s)\left[q(y,s,a)-q(y,s,b)\right]\,dy\,ds\\
&=\int_{\Gamma_{t_{1}}^{t_{2}}}\partial_{t}\varphi_{\varepsilon}(x-y,t-s)\left[v\wedge a-v\wedge b\right]\\
&\quad+\partial_{x}\varphi_{\varepsilon}(x-y,t-s)\left[f(v\wedge a)-f(v\wedge b)\right]\,dy\,ds\,,\\
\end{align*}
where we have made use of (\ref{zero_on_boundary}).
Since
\begin{equation*}
 |v\wedge a-v\wedge b|\leq\|v\|_{\infty}|b-a|
\end{equation*}
it follows from the calculation above
\begin{align*}
 \left|q_{\varepsilon}(x,t,a)- q_{\varepsilon}(x,t,b)\right|&\leq \int_{\Gamma_{t_{1}}^{t_{2}}}\left|\partial_{t}\varphi_{\varepsilon}\right|\cdot\|v\|_{\infty}|b-a|+C\left|\partial_{x}\varphi_\varepsilon\right|\cdot\|v\|_{\infty}|b-a|\,dy\,ds\\
&\leq C|b-a|\,.
\end{align*}
Therefore $q_{\varepsilon}$ is Lipschitz continuous with respect to the kinetic variable $a$ and we have for almost every $a\in\mathbb{R}$ in the classical sense
\begin{equation}\label{regularized_equation}
\partial_{t}\chi_{\varepsilon}+f'(a)\partial_{x}\chi_{\varepsilon}=\partial_{a}q_{\varepsilon}(x,t,a)\quad\mbox{in}\quad\Gamma_{t_1+C\varepsilon}^{t_2-C\varepsilon}\,.
\end{equation}
Notice that due to the convolution with $\varphi_{\varepsilon}$ both $\chi_{\varepsilon}$ and $q_{\varepsilon}$ are smooth with respect to $(x,t)$.
Furthermore for $(x,t)\in\Gamma_{t_1+C\varepsilon}^{t_2-C\varepsilon}$ the function $q_{\varepsilon}$ satisfies
\begin{equation}\label{compact_support_of_q}
 q_{\varepsilon}(x,t,a)=0\quad\mbox{if}\quad |a|\geq \|v\|_{\infty}\,.
\end{equation}
This follows from the classical fact, that
\[q(x,t,a)=0\quad\mbox{for}\quad |a|\geq \|v\|_{\infty}\,.\]
Indeed for $|a|\geq \|v\|_{\infty}$ and $\psi\in C^{\infty}_c(\Gamma_{t_1}^{t_2})$ we compute
\begin{align*}
\int_{\Gamma_{t_{1}}^{t_{2}}}q(x,t,a)\psi(x,t)\,dx\,dt&=\int_{\Gamma_{t_{1}}^{t_{2}}}\left[\partial_{t}v(x,t)\wedge a+\partial_{x}f(v(x,t)\wedge a)\right]\psi(x,t)\,dy\,ds\\
&=\int_{\Gamma_{t_{1}}^{t_{2}}}\left[\partial_{t}v(x,t)+\partial_{x}f(v(x,t))\right]\psi(x,t)\,dx\,dt\\
&=-\int_{\Gamma_{t_{1}}^{t_{2}}}v(x,t)\partial_{t}\psi(x,t)+f(v(x,t))\partial_{x}\psi(x,t)\,dx\,dt\\
&=0\,.
\end{align*}
Consider now a convex function $\eta(a)$ in $C^1$, which satisfies
 \[\lim_{a\rightarrow-\infty}\eta(a)=0\]
and denote \[\xi(a)=\int \eta'(a)f'(a)\,da\,.\] We claim that for all $(x,t)\in\Gamma_{t_1+C\varepsilon}^{t_2-C\varepsilon}$ the following holds
\begin{equation}\label{entropy_equality}
 \partial_t \eta_{\varepsilon}(x,t)+\partial_x \xi_{\varepsilon}(x,t)=-\int_{\Gamma_{t_1}^{t_2}}\eta''(a)q_{\varepsilon}(x,t,a)\,da\,,
\end{equation}
where
\[\eta_{\varepsilon}(x,t)=\int_{\Gamma_{t_{1}}^{t_{2}}}\eta(v(y,s))\varphi_{\varepsilon}\left(x-y,t-s\right)\,dy\,ds\]
and
\[\xi_{\varepsilon}(x,t)=\int_{\Gamma_{t_{1}}^{t_{2}}}\xi(v(y,s))\varphi_{\varepsilon}\left(x-y,t-s\right)\,dy\,ds\,.\]
Later will make special choices of $\eta$ in order to get (\ref{Linfty_bound}) and (\ref{control_of_the_defect_measure}).

\medskip

\noindent\textbf{Proof of claim (\ref{entropy_equality})}. We multiply (\ref{regularized_equation}) by $\eta'(a)$
\begin{equation*}
\eta'(a) \partial_{t}\chi_{\varepsilon}+\eta'(a)f'(a)\partial_{x}\chi_{\varepsilon}=\partial_{a}q_{\varepsilon}(x,t,a)\,.
\end{equation*}
Then integrating this equation with respect to $a$ gives
\begin{equation}\label{zwischen1}
 \int_{\mathbb{R}}\eta'(a)\partial_{t} \chi_{\varepsilon}+\eta'(a)f'(a)\partial_{x}\chi_{\varepsilon}\,da= \int_{\mathbb{R}}\eta'(a)\partial_{a}q_{\varepsilon}(x,t,a)\,da\,.
\end{equation}
We compute for the left-hand side
\begin{align*}
\int_{\mathbb{R}}\eta'(a)\chi_{\varepsilon}\,da&=\int_{\Gamma_{t_{1}}^{t_{2}}}\int_{\mathbb{R}}\eta'(a)\chi(v(y,s);a)\varphi_{\varepsilon}\left(x-y,t-s\right)\,da\,dy\,ds\\
&=\int_{\Gamma_{t_{1}}^{t_{2}}}\eta(v(y,s))\varphi_{\varepsilon}\left(x-y,t-s\right)\,dy\,ds=\eta_{\varepsilon}(x,t)
\end{align*}
and similarly for the second term
\begin{equation*}
\int_{\mathbb{R}}\eta'(a)f'(a)\chi_{\varepsilon}\,da=\int_{\Gamma_{t_{1}}^{t_{2}}}\xi(v(y,s))\varphi_{\varepsilon}\left(x-y,t-s\right)\,dy\,ds=\xi_{\varepsilon}(x,t)\,.
\end{equation*}
Thus (\ref{zwischen1}) reduces to
\begin{equation}\label{zwischen2}
\partial_{t}\eta_{\varepsilon}(x,t)+\partial_{x}\xi_{\varepsilon}(x,t)=\int_{\mathbb{R}}\eta'(a)\partial_{a}q_{\varepsilon}(x,t,a)\,da\,.
\end{equation}
Integrating the right-hand side by parts gives
\[\int_{\mathbb{R}}\eta'(a)\partial_{a}q_{\varepsilon}(x,t,a)\,da=-\int_{\mathbb{R}}\eta''(a)q(x,t,a)\,da\,,\]
where we have used the fact, that $q_{\varepsilon}$ is compactly supported in $a$. This gives the result (\ref{entropy_equality}) as claimed.

\medskip

Next we integrate inequality (\ref{entropy_equality}) over the set $\Gamma_{t_1+C\varepsilon}^{\bar{t}}$, where $\bar{t}\in(t_1+C\varepsilon,t_2-C\varepsilon)$. We will abbreviate $t_1+C\varepsilon$ by $\bar{t}_1$. We have
\begin{equation}\label{inequality_for_contraction}
   \int_{\Gamma_{\bar{t}_1}^{\bar{t}}}\partial_{t}\eta_{\varepsilon}(x,t)+\partial_{x}\xi_{\varepsilon}(x,t)\,dx\,dt=- \int_{\Gamma_{\bar{t}_1}^{\bar{t}}}\int_{\mathbb{R}}\eta''(a)q(x,t,a)\,da\,.
\end{equation}
For the first term on the left-hand side of (\ref{inequality_for_contraction}) we compute
\begin{align*}
\int_{\Gamma_{\bar{t}_1}^{\bar{t}}}\partial_{t}\eta_{\varepsilon}\,dx\,dt&=\int_{-\delta}^{\delta}\eta_{\varepsilon}(x,\bar{t})-\eta_{\varepsilon}(x,t_{1})\,dx\\
&\quad+\int_{\delta}^{\frac{(\bar{t}-\bar{t}_{1})}{\hat{\lambda}}+\delta}\int_{\hat{\lambda}(x-\delta)+\bar{t}_{1}}^{\bar{t}}\partial_{t}\eta_{\varepsilon}(x,t)\,dt\,dx\\
&\quad+\int^{-\delta}_{-\frac{\bar{t}-\bar{t}_{1}}{\hat{\lambda}}-\delta}\int_{-\hat{\lambda}(x+\delta)+\bar{t}_{1}}^{\bar{t}}\partial_{t}\eta_{\varepsilon}(x,t)\,dt\,dx\,.
\end{align*}
This gives
\begin{align*}
\int_{\Gamma_{t_1}^{\bar{t}}}\partial_{t}\eta_{\varepsilon}\,dx\,dt&=\int_{-\delta}^{\delta}\eta_{\varepsilon}(x,\bar{t})-\eta_{\varepsilon}(x,\bar{t}_{1})\,dx\\
&\quad+\int_{\delta}^{\frac{\bar{t}-\bar{t}_{1}}{\hat{\lambda}}+\delta}\eta_{\varepsilon}(x,\bar{t})-\eta_{\varepsilon}\left(x,\hat{\lambda}(x-\delta)+\bar{t}_{1}\right)\,dx\\
&\quad+\int^{-\delta}_{-\frac{\bar{t}-\bar{t}_{1}}{\hat{\lambda}}-\delta}\eta_{\varepsilon}(x,\bar{t})-\eta_{\varepsilon}\left(x,-\hat{\lambda}(x-\delta)+\bar{t}_{1}\right)\,dx\,.
\end{align*}
A regrouping of the terms together with a change of variable leads to
\begin{equation}\label{zwischen3}\begin{split}
\int_{\Gamma_{t_1}^{\bar{t}}}\partial_{t}\eta_{\varepsilon}\,dx\,dt&=\int_{\theta^{-}_{\varepsilon}(\bar{t})}^{\theta^{+}_{\varepsilon}(\bar{t})}\eta_{\varepsilon}(x,\bar{t})\,dx-\int_{-\delta}^{\delta}\eta_{\varepsilon}(x,\bar{t}_{1})\,dx\\
&\quad+\hat{\lambda}\int_{\bar{t}_{1}}^{\bar{t}}\eta_{\varepsilon}(\theta^{-}_{\varepsilon}(t),t)-\eta_{\varepsilon}(\theta^{+}_{\varepsilon}(t),t)\,dt\,,
\end{split}\end{equation}
where
\begin{equation}\label{theta}
\theta_{\varepsilon}^{\pm}(t)=\pm\frac{\bar{t}-\bar{t}_1}{\hat{\lambda}}\pm\delta\,.
\end{equation}
Integrating now the second term on the left-hand side of (\ref{inequality_for_contraction}) gives
\begin{equation}\label{zwischen4}
\int_{\Gamma_{\bar{t}_1}^{\bar{t}}}\partial_{x}\xi_{\varepsilon}(x,t)\,dx\,dt=\int_{t_{1}}^{\bar{t}}\xi_{\varepsilon}(\theta_{\varepsilon}^{+}(t),t)-\xi_{\varepsilon}(\theta_{\varepsilon}^{-}(t),t)\,dt\,.
\end{equation}
Inserting (\ref{zwischen3}) and (\ref{zwischen4}) back in (\ref{inequality_for_contraction}) leads to the identity
\begin{equation}\label{main_equality_lemma}\begin{split}
\int_{\theta_{\varepsilon}^{-}(\bar{t})}^{\theta^{+}_{\varepsilon}(\bar{t})}\eta_{\varepsilon}(x,\bar{t})\,dx&=\int_{\Lambda_{\bar{t}_1}^{\bar{t}}}\begin{pmatrix}\eta_{\varepsilon}\\-\xi_{\varepsilon}\end{pmatrix}\cdot\tau\,d\sigma- \int_{\Gamma_{\bar{t}_1}^{\bar{t}}}\int_{\mathbb{R}}\eta''(a)q(x,t,a)\,da\,.
\end{split}
\end{equation}
For suitable choices of $\eta$ this equality (\ref{main_equality_lemma}) will imply the first two claims of Lemma \ref{properties_of_entropy_solutions}.

First we prove (\ref{Linfty_bound}). Let $a_0$ be a real number being fixed later in this proof. We choose
\begin{equation*}
 \eta(a)=\left\{\begin{array}{cl}
                (a-a_{0})&\mbox{if}~~a-a_{0}\geq0\,,\\
		0 &\mbox{if}~~a-a_{0}\leq0\
               \end{array}\right.
\end{equation*}
and we aim to deduce
\begin{equation}\label{positive_part_contraction}
 \int_{\theta^{-}(\bar{t})}^{\theta^{+}(\bar{t})}|v(x,\bar{t})-a_{0}|^{+}\,d\sigma\leq C \int_{\Lambda_{t_{1}}^{\bar{t}}}|v_{0}(x)-a_{0}|^{+}\,d\sigma\,,
\end{equation}
from equality (\ref{main_equality_lemma}). 
The non-negativity of $\eta''(a)$ and $q_{\varepsilon}$ implies
\[\int_{\Gamma_{\bar{t}_1}^{\bar{t}}}\int_{\mathbb{R}}\eta''(a)q(x,t,a)\,da\ge 0\quad .\]
Using this inequality in equality (\ref{main_equality_lemma}), we obtain the estimate
\begin{equation*}
\int_{\theta_{\varepsilon}^{-}(\bar{t})}^{\theta^{+}_{\varepsilon}(\bar{t})}\eta_{\varepsilon}(x,\bar{t})\,dx\leq\int_{\Lambda_{\bar{t}_1}^{\bar{t}}}\begin{pmatrix}\eta_{\varepsilon}\\-\xi_{\varepsilon}\end{pmatrix}\cdot\tau\,d\sigma\,. 
\end{equation*}
Letting $\varepsilon\to0^+$ we get
\[\int_{\theta^{-}(\bar{t})}^{\theta^{+}(\bar{t})}\eta(x,\bar{t})\,dx\leq\int_{\Lambda_{t_1}^{\bar{t}}}\begin{pmatrix}\eta(v_1)\\-\xi(v_1)\end{pmatrix}\cdot\tau\,d\sigma\,.\]
We observe
\begin{equation}\label{bound_for_xi}
 \left|\xi(a)\right|\leq \max_{|b|\leq\|v_1\|_{\infty}}|f'(b)|\cdot\eta(a)\,,
\end{equation}
which implies
\[\int_{\theta^{-}(\bar{t})}^{\theta^{+}(\bar{t})}\eta(x,\bar{t})\,dx\leq C\int_{\Lambda_{t_1}^{\bar{t}}}\eta(v_1)\,dx\,.\]
This is our desired result (\ref{positive_part_contraction}) and choosing $a_0=\|v_1\|_{\infty}$ in (\ref{positive_part_contraction}) gives
\[\int_{\theta_{-}(\bar{t})}^{\theta^{+}(\bar{t})}|v(x,\bar{t})-a_{0}|^{+}\,d\sigma=0\]
and thus (\ref{Linfty_bound}) follows:
\[|v(x,t)|\leq \|v_1\|_{\infty}\quad\mbox{a.e. in}\quad\Gamma_{t_1}^{t_2}\,.\]

In order to prove (\ref{control_of_the_defect_measure}), we choose now
\begin{equation*}
 \eta(a):=\left\{\begin{array}{ccl}
                  2a^{2}&\mbox{if}&a\geq-\|v\|_{\infty}\,,\\
		  (a+\|v\|_{\infty})+2\|v\|_{\infty}^{2}&\mbox{if}&-(\|v\|_{\infty}+2\|v\|_{\infty}^{2})\leq a\leq-\|v\|_{\infty}\,,\\
		  0 &\mbox{if}&a\leq-(\|v\|_{\infty}+2\|v\|_{\infty}^{2})\,.
                 \end{array}\right.
\end{equation*}
Since $\eta$ is non-negative, we deduce from (\ref{main_equality_lemma})
\begin{equation}\label{last_equation_for_mass_bound}\begin{split}
 2\int_{\Gamma_{t_{1}+C\varepsilon}^{t_{2}-C\varepsilon}}\int_{\mathbb{R}}q_{\varepsilon}(x,t,a)\,da\,dx\,dt&\leq \int_{\Lambda_{\bar{t}_1}^{\bar{t}}}\begin{pmatrix}\eta_{\varepsilon}\\-\xi_{\varepsilon}\end{pmatrix}\cdot\tau\,d\sigma\quad .
\end{split}\end{equation}
Since $\eta(a)=2a^2$ for $a\in[-\|v\|_{\infty},\|v\|_{\infty}]$ we get
\[ |\xi(a)|=\int_{-\|v\|_{\infty}}^{a}|\eta'(b)f'(b)|\,db
\leq f'(\|v\|_{\infty}) \int_{-\|v\|_{\infty}}^{|a|}|\eta'(b)|\,db\,.\]
Hence, by letting $\varepsilon\to0$ in (\ref{last_equation_for_mass_bound}), we obain
\[\int_{\Gamma_{t_{1}}^{t_{2}}}\int_{\mathbb{R}}q(x,t,a)\,da\,dx\,dt\leq C(\delta+t_2-t_1)\,,\]
as announced in (\ref{control_of_the_defect_measure}).

Finally we are going to prove (\ref{contraction_inequality}). We choose the domain $\Gamma_{t_1}^{t_2}$ in such a way that \[0<\hat{\lambda}\leq \lambda_1\,,\]
where
\begin{equation}\label{def_lambda1}\begin{array}{c}
\lambda_1=\left(\max\left\{f(-R-1-\alpha),f(R+1+\alpha)\right\}\right)^{-1}\\[5mm]
\mbox{and}\\[5mm]
\alpha=\max\{\|w_1\|_{\infty},\|w_2\|_{\infty}\}\,.\end{array}
\end{equation}
 For the two entropy solutions $v_{1},\, v_{2}$ with boundary conditions $w_{1}$ and $w_{2}$ we consider the kinetic equations
\begin{equation*}
\left.\begin{array}{rclcl}
 \partial_{t}\chi_{i}+f'(a)\partial_{x}\chi_{i}&=&\partial_{a}q_{i}&\mbox{in}&\mathcal{D}'(\Gamma_{t_{1}}^{t_{2}}\times\mathbb{R})\\
\chi_{i}&=&\chi(w_{i};a)&\mbox{on}&\Lambda_{t_{1}}^{t_{2}}
\end{array}\right\}
\end{equation*}
where $\chi_i=\chi(v_i(x,t);a)$ for $i=1,\,2\,$. Then, as before, we can regularize our kinetic equations with the kernel defined in (\ref{kernel})
\begin{equation*}
\begin{array}{rclcl}
\partial_{t}\chi^{\varepsilon}_{i}+f'(a)\partial_{x}\chi^{\varepsilon}_{i}&=&\partial_{a}q^{\varepsilon}_{i}(x,t,a)&\mbox{in}&\Gamma_{t_{1}+C\varepsilon}^{t_{2}-C\varepsilon}\\
\end{array}
\end{equation*}
where 
\begin{equation*}
 \chi_{i}^{\varepsilon}(x,t,a)=\int_{\Gamma_{t_{1}}^{t_{2}}}\chi(v_{i}(x,t);a)\varphi_{\varepsilon}(x-y,t-s)\,dx\,dt\quad\mbox{for}\quad i=1,\,2
\end{equation*}
and $C>0$ is again chosen such that for $(x,t)\in \Gamma_{t_{1}+C\varepsilon}^{t_{2}-C\varepsilon}$
\[\varphi_{\varepsilon}(x-y,t-s)=0\quad\mbox{for}\quad (y,s)\in\partial\Gamma_{t_1}^{t_2}\,.\]
Then the function $\left(\chi_{1}^{\varepsilon}-\chi_{2}^{\varepsilon}\right)^{2}$ satisfies for $(x,t)\in\Gamma_{t_{1}+C\varepsilon}^{t_{2}-C\varepsilon}$ and almost every $a\in\mathbb{R}$
\begin{equation}\label{regualrized_equation_contraction}
 \partial_{t}\left(\chi_{1}^{\varepsilon}-\chi_{2}^{\varepsilon}\right)^{2}+f'(a)\partial_{x}\left(\chi_{1}^{\varepsilon}-\chi_{2}^{\varepsilon}\right)^{2}=\chi^{\varepsilon}_{1}\partial_{a}q_{1}^{\varepsilon}+\chi_{2}^{\varepsilon}\partial_{a}q_{2}^{\varepsilon}
-\chi_{2}^{\varepsilon}\partial_{a}q_{1}^{\varepsilon}-\chi_{1}^{\varepsilon}\partial_{a}q_{2}^{\varepsilon}\,.
\end{equation}
We make use again of the following abbreviation: $t_1+C\varepsilon=\bar{t}_1$. Let $\bar{t}\in(\bar{t}_1,+t_2-C\varepsilon)$, then we  integrate (\ref{regualrized_equation_contraction}) in $\Gamma_{\bar{t}_1}^{\bar{t}}\times\mathbb{R}$, which leads to
\begin{multline}\label{result1_for_contraction}
 \int_{\Gamma_{\bar{t}_{1}}^{\bar{t}}}\int_{\mathbb{R}}\partial_{t}\left(\chi_{1}^{\varepsilon}-\chi_{2}^{\varepsilon}\right)^{2}+f'(a)\partial_{x}\left(\chi_{1}^{\varepsilon}-\chi_{2}^{\varepsilon}\right)^{2}\,da\,dx\,dt\\
=\int_{\Gamma_{\bar{t}_{1}}^{\bar{t}}}\int_{\mathbb{R}}\chi^{\varepsilon}_{1}\partial_{a}q_{1}^{\varepsilon}+\chi_{2}^{\varepsilon}\partial_{a}q_{2}^{\varepsilon}-\chi_{2}^{\varepsilon}\partial_{a}q_{1}^{\varepsilon}-\chi_{1}^{\varepsilon}\partial_{a}q_{2}^{\varepsilon}\,da\,dx\,dt\,.
\end{multline}
We recall, that $\chi(v;a)=\mathbbm{1}_{a\leq v}$ and 
\begin{equation*}
q_{1}(x,t,a)=q_{2}(x,t,a)=0\quad\mbox{for}\quad|a|\geq \max\{\|v_{1}\|_{\infty},\|v_{2}\|_{\infty}\}\,.
\end{equation*}
Therefore we can calculate for $(x,t)\in \Gamma_{\bar{t}_1}^{\bar{t}}$ and $i,\,j\in\{1,\,2\}$
\begin{align*}
\int_{\mathbb{R}}\chi_{i}^{\varepsilon}\partial_{a}q_{j}^{\varepsilon}\,da&=\int_{\mathbb{R}}\int_{\Gamma_{t_{1}}^{t_{2}}}\chi(v_{i}(y,s);a)\varphi_{\varepsilon}(x-y,t-s)q_{j}^{\varepsilon}(x,t,a)\,dy\,ds\,da\\
&=\int_{\Gamma_{t_{1}}^{t_{2}}}q_{j}^{\varepsilon}(x,t,v_{i}(y,s))\varphi_{\varepsilon}(x-y,t-s)\,dy\,ds\,.
\end{align*}
This implies, since $\varphi_{\varepsilon}$ and $q_{\varepsilon}$ are non-negative
\[
 \int_{\mathbb{R}}\chi_{2}^{\varepsilon}\partial_{a}q_{1}^{\varepsilon}\,da\geq0\quad\mbox{and}\quad  \int_{\mathbb{R}}\chi_{1}^{\varepsilon}\partial_{a}q_{2}^{\varepsilon}\,da\geq0\,,
\]
which applied in (\ref{result1_for_contraction}) leads to the inequality
\begin{multline}\label{result2_for_contraction}
\int_{\Gamma_{\bar{t}_{1}}^{\bar{t}}}\int_{\mathbb{R}}\partial_{t}\left(\chi_{1}^{\varepsilon}-\chi_{2}^{\varepsilon}\right)^{2}+f'(a)\partial_{x}\left(\chi_{1}^{\varepsilon}-\chi_{2}^{\varepsilon}\right)^{2}\,da\,dx\,dt\\
\leq\int_{\Gamma_{\bar{t}_{1}}^{\bar{t}}}\int_{\mathbb{R}}\chi^{\varepsilon}_{1}\partial_{a}q_{1}^{\varepsilon}+\chi_{2}^{\varepsilon}\partial_{a}q_{2}^{\varepsilon}\,da\,dx\,dt\,.
\end{multline}
For the left hand-side of (\ref{result2_for_contraction}) we compute
\begin{equation}\label{calculation0}\begin{split}
 \int_{\mathbb{R}}\int_{\Gamma_{\bar{t}_{1}}^{\bar{t}}}&\partial_{t}\left(\chi_{1}^{\varepsilon}-\chi_{2}^{\varepsilon}\right)^{2}+f'(a)\partial_{x}\left(\chi_{1}^{\varepsilon}-\chi_{2}^{\varepsilon}\right)^{2}\,dx\,dt\,da\\
&=\int_{\mathbb{R}}\int_{\theta^{-}_{\varepsilon}(\bar{t})}^{\theta_{\varepsilon}^{+}(\bar{t})}\left(\chi_{1}^{\varepsilon}-\chi_{2}^{\varepsilon}\right)^{2}(x,\bar{t})\,dx\,da -\int_{\mathbb{R}}\int_{-\delta}^{\delta}\left(\chi_{1}^{\varepsilon}-\chi_{2}^{\varepsilon}\right)^{2}(x,\bar{t}_{1})\,dx\,da\\
&\quad+\hat{\lambda}\int_{\mathbb{R}}\int_{\bar{t}_{1}}^{\bar{t}}\left(\chi_{1}^{\varepsilon}-\chi_{2}^{\varepsilon}\right)^{2}(\theta^{+}_{\varepsilon}(t),t)-\left(\chi_{1}^{\varepsilon}-\chi_{2}^{\varepsilon}\right)^{2}(\theta_{\varepsilon}^{-}(t),t)\,dt\,da\\
&\quad+\int_{\mathbb{R}}\int_{\bar{t}_{1}}^{\bar{t}}f'(a)\left[\left(\chi_{1}^{\varepsilon}-\chi_{2}^{\varepsilon}\right)^{2}(\theta_{\varepsilon}^{-}(t),t)-\left(\chi_{1}^{\varepsilon}-\chi_{2}^{\varepsilon}\right)^{2}(\theta_{\varepsilon}^{+}(t),t)\right]\,dt\,da\,,
\end{split}\end{equation}
where $\theta_{\varepsilon}^{\pm}$ are defined in (\ref{theta}).
After a change of variable this expression simplifies to
\begin{multline}\label{calculation1}
 \int_{\mathbb{R}}\int_{\Gamma_{\bar{t}_{1}}^{\bar{t}}}\partial_{t}\left(\chi_{1}^{\varepsilon}-\chi_{2}^{\varepsilon}\right)^{2}+f'(a)\partial_{x}\left(\chi_{1}^{\varepsilon}-\chi_{2}^{\varepsilon}\right)^{2}\,dx\,dt\,da\\
=\int_{\mathbb{R}}\int_{\theta^{-}_{\varepsilon}(\bar{t})}^{\theta_{\varepsilon}^{+}(\bar{t})}\left(\chi_{1}^{\varepsilon}-\chi_{2}^{\varepsilon}\right)^{2}(x,\bar{t})\,dx\,da-\int_{\Lambda_{\bar{t}_1}^{\bar{t}}}\int_{\mathbb{R}}\begin{pmatrix}\left(\chi_{1}^{\varepsilon}-\chi_{2}^{\varepsilon}\right)^{2}\\f'(a)\left(\chi_{1}^{\varepsilon}-\chi_{2}^{\varepsilon}\right)^{2}\end{pmatrix}\cdot \tau\,da\,d\sigma\,.
\end{multline}
Using identity (\ref{calculation1}) in (\ref{result2_for_contraction}) gives
\begin{multline}\label{result3_for_contraction}
\int_{\mathbb{R}}\int_{\theta_{\varepsilon}^{-}(\bar{t})}^{\theta^{+}_{\varepsilon}(\bar{t})}\left(\chi_{1}^{\varepsilon}-\chi_{2}^{\varepsilon}\right)^{2}(x,\bar{t})\,dx\,da\\
\leq\int_{\Gamma_{\bar{t}_{1}}^{\bar{t}}}\int_{\mathbb{R}}\chi^{\varepsilon}_{1}\partial_{a}q_{1}^{\varepsilon}+\chi_{2}^{\varepsilon}\partial_{a}q_{2}^{\varepsilon}\,da\,dx\,dt+\int_{\Lambda_{\bar{t}_1}^{\bar{t}}}\int_{\mathbb{R}}\begin{pmatrix}\left(\chi_{1}^{\varepsilon}-\chi_{2}^{\varepsilon}\right)^{2}\\f'(a)\left(\chi_{1}^{\varepsilon}-\chi_{2}^{\varepsilon}\right)^{2}\end{pmatrix}\cdot \tau\,da\,d\sigma\,.
\end{multline}
We claim
\begin{equation}\label{asymptotic_of_q}
\lim_{\varepsilon\to0^+}\int_{\Gamma_{t_{1}}^{\bar{t}}}\int_{\mathbb{R}}\chi^{\varepsilon}_{i}\partial_{a}q_{i}^{\varepsilon}\,da\,dx\,dt=0\quad\mbox{for}\quad i\in\{1,\,2\}\,.
\end{equation}
\textbf{Proof of Claim (\ref{asymptotic_of_q}).} We consider the function $\chi_{i}^{\varepsilon}-\left(\chi^{\varepsilon}_{i}\right)^{2}$ which satisfies satisfies pointwise for $(x,t)\in\Gamma_{t_1+C\varepsilon}^{t_2-C\varepsilon}$ and almost every $a\in\mathbb{R}$
\begin{equation*}
 \partial_{t}\left[\chi_{i}^{\varepsilon}-\left(\chi_{i}^{\varepsilon}\right)^{2}\right]+f'(a)\partial_{x}\left[\chi_{i}^{\varepsilon}-\left(\chi_{i}^{\varepsilon}\right)^{2}\right]=\partial_{a}q_{i}^{\varepsilon}+2\chi_{i}^{\varepsilon}\partial_{a}q_{i}^{\varepsilon}\,.
\end{equation*}
Integrating this in $\Gamma_{\bar{t}_{1}}^{\bar{t}}\times\mathbb{R}$ leads to
\begin{multline}\label{asymptotic0}
\int_{\mathbb{R}}\int_{\Gamma_{\bar{t}_{1}}^{\bar{t}}}\partial_{t}\left[\chi_{i}^{\varepsilon}-\left(\chi_{i}^{\varepsilon}\right)^{2}\right]+f'(a)\partial_{x}\left[\chi_{i}^{\varepsilon}-\left(\chi_{i}^{\varepsilon}\right)^{2}\right]\,dx\,dt\,da\\=\int_{\mathbb{R}}\int_{\Gamma_{\bar{t}_{1}}^{\bar{t}}}2\chi_{i}^{\varepsilon}\partial_{a}q_{i}^{\varepsilon}\,dx\,dt\,da\,,
\end{multline}
where we made use of the fact, that $q_i^{\varepsilon}$ is compactly supported in $a$.
For the left-hand side of (\ref{asymptotic0}) one can compute following step by step (\ref{calculation0}) and (\ref{calculation1})
\begin{multline}\label{asymptotic1}
\int_{\mathbb{R}}\int_{\Gamma_{\bar{t}_{1}}^{\bar{t}}}\partial_{t}\left[\chi_{i}^{\varepsilon}-\left(\chi_{i}^{\varepsilon}\right)^{2}\right]+f'(a)\partial_{x}\left[\chi_{i}^{\varepsilon}-\left(\chi_{i}^{\varepsilon}\right)^{2}\right]\,dx\,dt\,da\\
=\int_{\mathbb{R}}\int_{\theta^{-}_{\varepsilon}(\bar{t})}^{\theta_{\varepsilon}^{+}(\bar{t})}\left[\chi_{i}^{\varepsilon}-\left(\chi_{i}^{\varepsilon}\right)^2\right](x,\bar{t})\,dx\,da-\int_{\Lambda_{\bar{t}_1}^{\bar{t}}}\int_{\mathbb{R}}\begin{pmatrix}\chi_{i}^{\varepsilon}-\left(\chi_{i}^{\varepsilon}\right)^2\\f'(a)\left[\chi_{i}^{\varepsilon}-\left(\chi_{i}^{\varepsilon}\right)^2\right]\end{pmatrix}\cdot \tau\,da\,d\sigma\,.
\end{multline}
For the right-hand side of (\ref{asymptotic1}) we observe
\begin{multline}\label{asymptotic2}
 \lim_{\varepsilon\rightarrow0^+}\int_{\mathbb{R}}\int_{\theta^{-}_{\varepsilon}(\bar{t})}^{\theta_{\varepsilon}^{+}(\bar{t})}\left[\chi_{i}^{\varepsilon}-\left(\chi_{i}^{\varepsilon}\right)^2\right](x,\bar{t})\,dx\,da\\=\int_{\mathbb{R}}\int_{\theta^{-}(\bar{t})}^{\theta^{+}(\bar{t})}\left[\chi_{i}-\left(\chi_{i}\right)^2\right](x,\bar{t})\,dx\,da
\end{multline}
and
\begin{multline}\label{asymptotic3}
 \lim_{\varepsilon\rightarrow0^+}\int_{\Lambda_{\bar{t}_1}^{\bar{t}}}\int_{\mathbb{R}}\begin{pmatrix}\chi_{i}^{\varepsilon}-\left(\chi_{i}^{\varepsilon}\right)^2\\f'(a)\left[\chi_{i}^{\varepsilon}-\left(\chi_{i}^{\varepsilon}\right)^2\right]\end{pmatrix}\cdot \tau\,da\,d\sigma\\=\int_{\Lambda_{t_1}^{\bar{t}}}\int_{\mathbb{R}}\begin{pmatrix}\chi_{i}-\left(\chi_{i}\right)^2\\f'(a)\left[\chi_{i}-\left(\chi_{i}\right)^2\right]\end{pmatrix}\cdot \tau\,da\,d\sigma\,.
\end{multline}
Since
\[\chi_i=\left(\chi_{i}\right)^2\]
the right-hand side of (\ref{asymptotic2}) and (\ref{asymptotic3}) are zero. Thus
\[\lim_{\varepsilon\rightarrow0^+}\int_{\mathbb{R}}\int_{\theta^{-}_{\varepsilon}(\bar{t})}^{\theta_{\varepsilon}^{+}(\bar{t})}\left[\chi_{i}^{\varepsilon}-\left(\chi_{i}^{\varepsilon}\right)^2\right](x,\bar{t})\,dx\,da-\int_{\Lambda_{\bar{t}_1}^{\bar{t}}}\int_{\mathbb{R}}\begin{pmatrix}\chi_{i}^{\varepsilon}-\left(\chi_{i}^{\varepsilon}\right)^2\\f'(a)\left[\chi_{i}^{\varepsilon}-\left(\chi_{i}^{\varepsilon}\right)^2\right]\end{pmatrix}\cdot \tau\,da\,d\sigma=0\,.\]
With (\ref{asymptotic1}) one concludes
\[\lim_{\varepsilon\rightarrow0^+}\int_{\mathbb{R}}\int_{\Gamma_{\bar{t}_{1}}^{\bar{t}}}\partial_{t}\left[\chi_{i}^{\varepsilon}-\left(\chi_{i}^{\varepsilon}\right)^{2}\right]+f'(a)\partial_{x}\left[\chi_{i}^{\varepsilon}-\left(\chi_{i}^{\varepsilon}\right)^{2}\right]\,dx\,dt\,da=0\,.\]
Finally taking limits on both sides of (\ref{asymptotic0}) we get
\[
 \lim_{\varepsilon\to0^+}\int_{\Gamma_{t_{1}}^{\bar{t}}}\int_{\mathbb{R}}\chi^{\varepsilon}_{i}\partial_{a}q_{i}^{\varepsilon}\,da\,dx\,dt\quad\mbox{for}\quad i\in\{1,\,2\}\,,
\]
as announced.

\medskip

Letting $\varepsilon\to0^+$ in (\ref{result3_for_contraction}) and using (\ref{asymptotic_of_q}) leads to
\begin{equation}\label{last1}
 \int_{\mathbb{R}}\int_{\theta^{-}(\bar{t})}^{\theta^{+}(\bar{t})}\left(\chi_{1}-\chi_{2}\right)^{2}(x,\bar{t})\,dx\,da\\
\leq\int_{\Lambda_{t_1}^{\bar{t}}}\int_{\mathbb{R}}\begin{pmatrix}\left(\chi_{1}-\chi_{2}\right)^{2}\\f'(a)\left(\chi_{1}-\chi_{2}\right)^{2}\end{pmatrix}\cdot \tau\,da\,d\sigma\,.
\end{equation}
We compute 
\begin{equation}\label{last2}
 \int_{\mathbb{R}}\left(\chi(v_{1}(x,t);a)-\chi(v_{2}(x,t);a)\right)^{2}\,da=|v_{1}(x,t)-v_{2}(x,t)|\,,
\end{equation}
and 
\begin{equation}\label{last3}
 \int_{\mathbb{R}}\begin{pmatrix}\left(\chi_{1}-\chi_{2}\right)^{2}\\f'(a)\left(\chi_{1}-\chi_{2}\right)^{2}\end{pmatrix}\cdot \tau\,da=\begin{pmatrix}|w_{1}-w_{2}|\\\operatorname{sign}(w_1-w_2)(f(w_1)-f(w_2))\end{pmatrix}\cdot \tau\,.
\end{equation}
Applying (\ref{last2}) and (\ref{last3}) in (\ref{last1}) gives
\begin{equation}\label{last4}
 \int_{\theta^{-}(\bar{t})}^{\theta^{+}(\bar{t})}|v_{1}(x,\bar{t})-v_{2}(x,\bar{t})|\,dx
\leq\int_{\Lambda_{t_1}^{\bar{t}}}\begin{pmatrix}|w_{1}-w_{2}|\\\operatorname{sign}(w_1-w_2)(f(w_1)-f(w_2))\end{pmatrix}\cdot \tau\,d\sigma\,.
\end{equation}
For the right hand side, we compute
\begin{equation*}\begin{split}
 \int_{\Lambda_{t_1}^{\bar{t}}}&\begin{pmatrix}|w_{1}-w_{2}|\\\operatorname{sign}(w_1-w_2)(f(w_1)-f(w_2))\end{pmatrix}\cdot \tau\,d\sigma\\
&=\int_{s_1}^{s_2}|w_1(s,\gamma(s,t_1))-w_2(s,\gamma(s))|\\
&\quad+\partial_s\gamma(s,t_1) \left[\operatorname{sign}(w_1-w_2)(f(w_1)-f(w_2))\right](s,\gamma(s,t_1)\,ds\\
&=\int_{s_1}^{s_2}|w_1(s,\gamma(s,t_1))-w_2(s,\gamma(s))|\cdot(1\pm\partial_s\gamma(s,t_1)f'(\alpha))\\
&\leq C \int_{s_1}^{s_2}|w_1(s,\gamma(s,t_1))-w_2(s,\gamma(s))|\,ds\,,
\end{split}\end{equation*}
for a function $\alpha$. From (\ref{last4}) we obtain
\begin{equation*}
 \int_{\theta^{-}(\bar{t})}^{\theta^{+}(\bar{t})}|v_{1}(x,\bar{t})-v_{2}(x,\bar{t})|\,dx
\leq C\int_{\Lambda_{t_1}^{\bar{t}}}|w_1(s,\gamma(s,t_1))-w_2(s,\gamma(s))|\,d\sigma
\end{equation*}
as claimed.
\end{proof}

%%%%%%%%%%%%%%%%%%%%%%%%%%%%%%%%%%%%%%%%%%%%%%%%%%%%%%%%%%%%%%%%%%%%%%%%%%%%%%%%%%%%%%%%%%%%%%%%%%%%%%%%%%%%%%%%%%%%%%%%%%%%%%%%%%%
%%%%%%%%%%%%%%%%%%%%%%%%%%%%%%%%%%%%%%%%%%%%%%%%%%%%%%%%%%%%%%%%%%%%%%%%%%%%%%%%%%%%%%%%%%%%%%%%%%%%%%%%%%%%%%%%%%%%%%%%%%%%%%%%%%%
%%%%%%%%%%%%%%%%%%%%%%%%%%%%%%%%%%%%%%%%%%%%%%%%%%%%%%%%%%%%%%%%%%%%%%%%%%%%%%%%%%%%%%%%%%%%%%%%%%%%%%%%%%%%%%%%%%%%%%%%%%%%%%%%%%%
\subsection{Blow up at the points of negative density.}\label{section_blow_up}
In this section we aim to prove the following lemma
\begin{lemma}\label{lemma_positiv_density}
 Let $u\in L^{\infty}(\mathbb{R}\times[0,T))$ be a weak solution of (\ref{scl}), which satisfies (\ref{minimality}). Then for $\mathcal{H}^1$ almost every  $(x_{0},t_{0})\in\mathbb{R}\times(0,T)$
\begin{equation*}
\limsup_{r\rightarrow 0^+}\frac{1}{r}\int_{\mathbb{R}}m\left(B_r(x_0,t_0),a\right)\,da\geq 0\,.
\end{equation*}
\end{lemma}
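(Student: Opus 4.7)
The plan is to argue by contradiction. Suppose the set $E := \{(x_0, t_0) \in \mathbb R \times (0,T) : \limsup_{r \to 0^+} r^{-1} \int_\mathbb R m(B_r(x_0, t_0), a)\,da < 0\}$ has $\mathcal H^1(E) > 0$. Writing $\mu_u := \int_\mathbb R m(\cdot, a)\,da$, the hypothesis on $m$ together with (\ref{relation_entropy_production3})--(\ref{relation_entropy_production5}) shows that $\mu_u$ is a signed Radon measure on $\mathbb R \times (0,T)$ with the distributional identity $\mu_u = -\partial_t(u^2/2) - \partial_x \xi(u)$, where $\xi(s) := \int_0^s \sigma f'(\sigma)\,d\sigma$. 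Standard differentiation theory for signed Radon measures applied to $\mu_u$ would let one pick a point $(x_0, t_0) \in E$ and constants $\eta, r_0 > 0$ with $\mu_u(B_r(x_0, t_0)) \leq -\eta r$ for every $r \in (0, r_0)$; choosing $(x_0, t_0)$ to be a density point for $|\mu_u|$ ensures that this bound also holds, up to a vanishing error, on any trapezoidal neighbourhood of $(x_0, t_0)$ of diameter comparable to $r$.

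For each small $r$ the plan is to fix $\hat\lambda \in (0, \min\{\lambda_0, \lambda_1\}]$ with $\lambda_0, \lambda_1$ from Lemma \ref{properties_of_entropy_solutions} and a trapezoid $\Gamma = \Gamma_{t_1}^{t_2}$ with $B_r(x_0, t_0) \subset \Gamma$ and $t_2 - t_1, \delta \sim r$. Proposition \ref{existence} yields the entropy solution $v \in L^\infty(\Gamma)$ of (\ref{modified_scl}) with boundary datum $v|_\Lambda = u|_\Lambda$, attained in the $L^1$-trace sense by (\ref{boundary_asymptotic}). Assemble the competitor
\[
\tilde u := v\,\mathbf{1}_\Gamma + u\,\mathbf{1}_{(\mathbb R \times [0, t_2)) \setminus \Gamma},
\]
extended for $t \in [t_2, T)$ as the Kruzhkov entropy solution of (\ref{scl}) issued from $\tilde u(\cdot, t_2)$. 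Then $\tilde u$ is a weak solution of (\ref{scl}) on $\mathbb R \times [0,T)$ whose defect measure $\tilde m$ coincides with $m$ outside $\Gamma$ and equals the non-negative $\tilde q$ inside $\Gamma$. Applying the minimality (\ref{minimality}) with $\bar t = t_2$, the contributions outside $\Gamma$ cancel and one obtains $\int_\mathbb R |m|(\Gamma, a)\,da \leq \int_\mathbb R \tilde q(\Gamma, a)\,da$. The divergence theorem applied on $\Gamma$ to the identity $\mu_u = -\partial_t \eta(u) - \partial_x \xi(u)$ and its analogue for $v$ (with $\eta(s) = s^2/2$), together with the coincidence of the traces on $\Lambda$, should yield the basic identity
\[
\int_\mathbb R m(\Gamma, a)\,da - \int_\mathbb R \tilde q(\Gamma, a)\,da = \int_{\mathrm{top}(\Gamma)} \bigl[\eta(v) - \eta(u)\bigr](x, t_2)\,dx.
\]
Combined with $\int m(\Gamma, a)\,da \leq -\eta r$, $\int \tilde q(\Gamma, a)\,da \geq 0$, and the minimality inequality, this forces
\[
\int_{\mathrm{top}(\Gamma)} \bigl[\eta(u) - \eta(v)\bigr](x, t_2)\,dx \geq 2 \eta r.
\]

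The last bound must be ruled out in the limit $r \to 0^+$. Since $\mathrm{top}(\Gamma)$ has length of order $r$ and $|\eta(u) - \eta(v)| \leq C|u - v|$, this is of the critical scale $r$: I need $r_n^{-1} \int_{\mathrm{top}(\Gamma)} |u - v|(x, t_2)\,dx \to 0$ along some sequence $r_n \to 0^+$. The plan is a blow-up at $(x_0, t_0)$. For $r_n \to 0^+$ set $u^{(r_n)}(y, s) := u(x_0 + r_n y, t_0 + r_n s)$, and similarly $v^{(r_n)}$, $\Gamma^{(r_n)}$. Each $u^{(r_n)}$ remains a weak solution of (\ref{scl}) and inherits a localized form of the minimality (\ref{minimality}); the uniform $L^\infty$-bound (\ref{Linfty_bound}), the Oleinik regularization for the $v^{(r_n)}$ and the mass bound (\ref{control_of_the_defect_measure}) give the compactness needed to extract subsequential limits $u^*, v^*$ on the unit trapezoid. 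Scale invariance of the blow-up at the origin forces $u^*$ to be self-similar in $(y, s)$, so a Dafermos-type argument in the spirit of Chapter 9.5 of \cite{Da} (recalled in the introduction) applies: the inherited minimality forces $u^*$ to satisfy the $E$-condition, and by the $L^1$-contraction (\ref{contraction_inequality}) one concludes $u^* = v^*$ on the unit trapezoid. Hence $r_n^{-1} \int_{\mathrm{top}(\Gamma^{(r_n)})} |u - v|(x, t_2^{(n)})\,dx \to 0$, contradicting the lower bound above. The principal obstacle is making this blow-up rigorous: compactness of the rescaled solutions, scale invariance of the limit, passage of the minimality to the limit, and the invocation of Dafermos' theorem in the self-similar blow-up.
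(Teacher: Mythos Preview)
Your overall contradiction/competitor strategy is the right one and matches the paper's, but there is a genuine gap in the blow-up step.  The assertion that ``scale invariance of the blow-up at the origin forces $u^*$ to be self-similar'' is unjustified: a blow-up of an arbitrary $L^\infty$ weak solution of (\ref{scl}) at a point need not be a function of $y/s$, so you cannot invoke Dafermos' Riemann-problem result to conclude that $u^*$ satisfies the $E$-condition or that $u^*=v^*$.  The paper supplies exactly the missing structural input here: it quotes Lecumberry's rectifiability/structure theorem (Theorem~1.2 and Proposition~3.3 in \cite{Le}) to show that the blow-up limit $u_\infty$ has jump set a line or half-line with \emph{constant} approximate jump values $u_\infty^\pm$, and that the negativity of the limit measure forces $u_\infty^-<u_\infty^+$.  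With this structure in hand, the entropy solution with the same trace on the trapezoid is an explicit rarefaction wave, hence has \emph{zero} entropy production --- no self-similarity of $u_\infty$ and no appeal to Dafermos are needed.

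A second, related issue is the ``passage of minimality to the limit''.  You acknowledge this as an obstacle but do not indicate how to resolve it; in fact, inherited minimality for the blow-up is never established in the paper.  Instead, the paper rescales the entropy competitor $w_k$ \emph{back} to the original variables, obtaining a genuine weak solution $\tilde w_k$ of (\ref{scl}) to which the hypothesis (\ref{minimality}) applies directly at every $k$.  This yields $|\mu_k|(\Gamma_{t_1}^{t_2})\le \int_{\mathbb R} q_k(\Gamma_{t_1}^{t_2},a)\,da$, and only then does one pass to the limit using $q_k\rightharpoonup q_\infty=0$.  Your ``basic identity'' comparing top traces and the bound $\int_{\mathrm{top}}[\eta(u)-\eta(v)]\ge \eta r$ are not needed (and, incidentally, that bound follows from the density assumption and $\tilde q\ge 0$ alone, so minimality is not actually being used at that step).
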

A useful lemma that will be used to prove Lemma \ref{lemma_positiv_density} is the following.
\begin{lemma}\label{compactness}
 Let $u\in L^{\infty}(\mathbb{R}\times(0,T))$ be a weak solution of (\ref{scl}). Let $r_{n}\rightarrow0^+$. For  $(x_{0},t_{0})\in\mathbb{R}\times(0,T)$ define
\begin{equation*}\begin{array}{c}
u_{n}(x,t):=\left(D_n^{-1}\right)^{*}u(x,t)\\[5mm]
\mbox{and}\\[5mm]
\mu_{n}:=\displaystyle\frac{1}{r_n}\int_{\mathbb{R}}\left(D_n\right)_*m\,da\,,
\end{array}\end{equation*}
where
\begin{equation}\label{dilation}
D_n(x,t)=\left(\frac{x-x_0}{r_n},\frac{t-t_0}{r_n}\right)\,.
\end{equation}
Then there exists for $\mathcal{H}^1$ almost every  $(x_{0},t_{0})\in\mathbb{R}\times(0,T)$ a subsequence $r_{k}$ such that
\begin{equation*}
 u_{k}\rightarrow u_{\infty}\quad\mbox{in}\quad L^{1}_{loc}(\mathbb{R}^2)\,.
\end{equation*}
And furthermore
\begin{equation*}
 \mu_{k}\rightharpoondown\mu_{\infty}\quad\mbox{in}\quad\mathcal{M}_{loc}(\mathbb{R}^{2})\,.
\end{equation*}
Which means in other words
\[\int_{\mathbb{R}^2}\psi\,d\mu_k\rightarrow\int_{\mathbb{R}^2}\psi\,d\mu_{\infty}\quad\mbox{for all}\quad\psi\in C^{0}_c(\mathbb{R}^2)\,.\]
\end{lemma}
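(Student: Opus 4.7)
The plan is to reduce both convergence assertions to a single local $1$-density bound, at the blow-up point, for the scalar Radon measure
\[
\nu(B) := \int_{\mathbb{R}}|m|(B,a)\,da
\]
on $\mathbb{R}\times(0,T)$. First I check that $\nu$ is locally finite. Since $u\in L^\infty$, a direct calculation (the same giving (\ref{compact_support_of_q})) shows that $m(\cdot,\cdot,a)=0$ whenever $|a|>\|u\|_\infty$, so for every compact $K\subset\mathbb{R}\times(0,T)$
\[
\nu(K)=\int_{-\|u\|_\infty}^{\|u\|_\infty}|m|(K,a)\,da\le|m|\bigl(K\times[-\|u\|_\infty,\|u\|_\infty]\bigr)<\infty,
\]
using that $m$ is assumed to be a Radon measure on $\mathbb{R}\times[0,T)\times\mathbb{R}$.

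Next I invoke the classical density theorem of geometric measure theory (e.g.\ Mattila, Theorem~6.9): for any locally finite Radon measure $\nu$ on $\mathbb{R}^2$, the upper $1$-density
\[
\Theta^{*1}(\nu,(x_0,t_0))=\limsup_{r\to 0^+}\frac{\nu(B_r((x_0,t_0)))}{r}
\]
is finite at $\mathcal{H}^1$-almost every point. Fix such a point $(x_0,t_0)$; this is the exceptional set asserted in the lemma. A direct change of variables gives $\int\phi\,d\mu_n=r_n^{-1}\int(\phi\circ D_n)\,d\nu$ for $\phi\in C^0_c(\mathbb{R}^2)$, so that for each $R>0$
\[
|\mu_n|(B_R(0))\le\frac{\nu\bigl(B_{R r_n}((x_0,t_0))\bigr)}{r_n}
\]
is uniformly bounded in $n$ by the density bound. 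Weak-$*$ compactness of uniformly locally bounded Radon measures then produces a subsequence $\mu_k\rightharpoondown\mu_\infty$ in $\mathcal{M}_{loc}(\mathbb{R}^2)$.

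For the strong convergence of $u_n$, observe that $u_n=u\circ D_n^{-1}$ is itself a weak solution of (\ref{scl}) (the equation is invariant under the homogeneous rescaling $D_n$), uniformly bounded in $L^\infty$ by $\|u\|_\infty$, with defect $m_n=r_n^{-1}(D_n)_*m$ satisfying $\int_{\mathbb{R}}|m_n|(K,a)\,da\le C_K$ uniformly in $n$ for every compact $K$. For the strictly convex entropy $\eta(u)=u^2/2$ with flux $\xi(u)=\int_0^u s f'(s)\,ds$, the kinetic-formulation identity used in the proof of Lemma \ref{properties_of_entropy_solutions} expresses $\partial_t\eta(u_n)+\partial_x\xi(u_n)$ as an $a$-integral of the $m_n(\cdot,\cdot,a)$, hence as a signed measure of uniformly locally bounded total variation. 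By Murat's lemma this dissipation is precompact in $W^{-1,p}_{loc}(\mathbb{R}^2)$ for $1<p<2$, and combined with the $L^\infty$ bound it is precompact in $H^{-1}_{loc}$. Applying compensated compactness for scalar conservation laws with genuinely nonlinear flux (Tartar--Panov), I extract a subsequence $u_k\to u_\infty$ in $L^1_{loc}(\mathbb{R}^2)$.

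The main obstacle is the compensated-compactness step: one must carefully pass from a measure bound on the Kruzhkov defects $m_n(\cdot,\cdot,a)$ integrated in $a$ to $H^{-1}_{loc}$-precompactness of $\partial_t\eta(u_n)+\partial_x\xi(u_n)$ for a strictly convex entropy, then apply the Tartar framework to conclude a.e.\ convergence of a subsequence. The other ingredients, namely Mattila's density theorem, the change of variables, and weak-$*$ compactness of bounded Radon measures, are essentially off-the-shelf once the upper $1$-density of $\nu$ is controlled.
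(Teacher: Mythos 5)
Your proposal is correct and reaches the same two conclusions, but it diverges from the paper in the key step that produces the strong $L^1_{loc}$ convergence of $u_n$. For the weak-$*$ convergence of $\mu_n$ you and the paper proceed identically: both invoke a density theorem for Radon measures ($\mathcal{H}^1$-a.e.\ finiteness of the upper $1$-density of $\nu=\int_{\mathbb{R}}|m|(\cdot,a)\,da$; you cite Mattila, the paper cites Theorem 2.56 of [AFP]), change variables to bound $|\mu_n|(B_R(0))$ uniformly in $n$, and apply weak-$*$ compactness.

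For the $L^1_{loc}$ compactness of $u_n$ the paper simply cites Proposition~\ref{prop_lecumberry} (from Appendix A of [Le]), which packages the whole compactness statement into one black box: the set $\{\,u\in L^\infty:\ \|u\|_\infty+\int\!\!\int|m|\le M\,\}$ is $L^1$-compact. You instead rebuild that compactness from scratch via the Murat--Tartar compensated-compactness machinery: the uniform local bound on $\int_{\mathbb{R}}|m_n|(\cdot,a)\,da$ bounds $\partial_t\eta(u_n)+\partial_x\xi(u_n)$ in $\mathcal{M}_{loc}$, the $L^\infty$ bound gives $W^{-1,\infty}_{loc}$ boundedness, Murat's interpolation lemma gives $H^{-1}_{loc}$-precompactness, and then genuine nonlinearity of $f$ (strict convexity) forces the Young measure to be a Dirac, hence strong $L^1_{loc}$ convergence. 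This is a valid alternative and has the virtue of being self-contained and using only standard references, at the cost of invoking heavier machinery. One stylistic remark: you only exhibit the entropy $\eta=u^2/2$, but the Tartar commutation argument you are implicitly running pairs it with the trivial entropy pair $(u,f(u))$ whose dissipation vanishes because $u_n$ is a weak solution; for strictly convex $f$ that pair of entropies is enough, so no gap, though it is worth saying explicitly. Both routes are correct; the paper's is shorter because it delegates the analytic core to Lecumberry's thesis, while yours replaces that citation with the textbook compensated-compactness argument.
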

Lemma \ref{compactness} will be a consequence of of the following proposition, which is proved in Appendix A of \cite{Le}.
\begin{proposition}\label{prop_lecumberry}
 For any constant $M\geq0$, for any bounded set $\Omega$, the set 
\begin{equation*}
 \left\{u\in L^{\infty}(\Omega):~\|u\|_{\infty}+\int_{\Omega}\int_{\mathbb{R}}|m(x,t,a)|\leq M\right\}
\end{equation*}
is compact in $L^{1}(\Omega)$ with respect to the strong topology.
\end{proposition}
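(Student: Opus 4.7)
The plan is to reduce the lemma to an application of Proposition \ref{prop_lecumberry} combined with standard weak-$\ast$ compactness of Radon measures, after verifying appropriate uniform bounds on the rescaled sequence. Unpacking the definitions, $u_n(x,t)=u(x_0+r_nx,\,t_0+r_nt)$, and a direct change of variables in the distributional identity defining $m$ shows that the entropy production of $u_n$ equals $m_n=\frac{1}{r_n}(D_n)_{\ast}m$; in particular $\mu_n=\int_{\mathbb R}m_n\,da$. Consequently
\[
|\mu_n|(B_R)\le \int_{B_R}\int_{\mathbb R}|m_n|(x,t,a)\,da\,dx\,dt=\frac{1}{r_n}\int_{\mathbb R}|m|\bigl(B_{r_nR}(x_0,t_0),a\bigr)\,da,
\]
while the $L^\infty$ bound is immediate from the pullback: $\|u_n\|_\infty\le\|u\|_\infty$.

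The key measure-theoretic input is that the locally finite positive Radon measure
\[
\nu(\cdot):=\int_{\mathbb R}|m|(\cdot,a)\,da\quad\mbox{on}\quad\mathbb R\times(0,T)
\]
has finite upper $1$-density $\Theta^{\ast 1}(\nu,\cdot)$ outside an $\mathcal H^1$-negligible set; this is the standard density theorem for Radon measures on Euclidean space (cf.\ Federer 2.10.19). Fix any $(x_0,t_0)$ in the complement of that negligible set. Then for every $R>0$ and all $n$ sufficiently large, the displayed bound gives
\[
\int_{B_R}\int_{\mathbb R}|m_n|\,da\,dx\,dt\le R\cdot\bigl(\Theta^{\ast 1}(\nu,(x_0,t_0))+1\bigr),
\]
so the hypotheses of Proposition \ref{prop_lecumberry} are satisfied uniformly in $n$ on every ball $B_R$.

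Proposition \ref{prop_lecumberry} applied on each $B_R$ then yields strong $L^1(B_R)$-compactness of $\{u_n\}$, and a standard Cantor diagonal extraction along an exhaustion $R_j\uparrow\infty$ produces a subsequence $u_k\to u_\infty$ in $L^1_{loc}(\mathbb R^2)$. Along this subsequence the measures $\mu_k$ have uniformly bounded total variation on every compact set, so the sequential weak-$\ast$ compactness of bounded Radon measures, combined with one further diagonal extraction over $R_j$, yields a subsequence (still labelled $k$) with $\mu_k\rightharpoondown\mu_\infty$ in $\mathcal M_{loc}(\mathbb R^2)$.

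The sole delicate point is the isolation of the $\mathcal H^1$-null exceptional set of admissible base points $(x_0,t_0)$; everything else is bookkeeping with the pushforward/pullback identities and routine diagonal extractions once Proposition \ref{prop_lecumberry} is in hand.
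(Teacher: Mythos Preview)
Your proposal does not address the stated proposition at all. Proposition~\ref{prop_lecumberry} is the abstract $L^1$-compactness criterion for bounded families with uniformly bounded entropy defect; in the paper it is not proved but quoted from Appendix~A of \cite{Le}. Your write-up instead \emph{assumes} Proposition~\ref{prop_lecumberry} as a black box and uses it to prove Lemma~\ref{compactness} (the blow-up compactness lemma). So as a proof of the statement you were given, it is circular: the very result you are asked to establish is invoked as a tool.

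If your intended target was actually Lemma~\ref{compactness}, then your argument is correct and essentially identical to the paper's. Both proofs observe the trivial $L^\infty$ bound under pullback, invoke the standard density theorem for Radon measures (the paper cites Theorem~2.56 in \cite{AFP}; you cite Federer~2.10.19, which is the same content) to ensure that for $\mathcal H^1$-a.e.\ $(x_0,t_0)$ the rescaled total variations $\frac{1}{r_n}\nu(B_{r_nR}(x_0,t_0))$ stay bounded, then feed this into Proposition~\ref{prop_lecumberry} for strong $L^1_{loc}$ compactness, and finish with weak-$\ast$ compactness of measures. Your version spells out the diagonal extraction over an exhaustion a bit more carefully than the paper does, but there is no substantive difference in strategy.
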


\begin{proof}[\textbf{Proof of Lemma \ref{compactness}}]
By construction we already have
\begin{equation}\label{L_infinity_bound_for_u}
 \|u_{n}\|_{\infty}\leq\|u\|_{\infty}\,.
\end{equation}
For this reason it remains to show that for all $R>0$ $|\mu_{n}|(B_{R}(0,0))$ and for $\mathcal{H}^1$ almost every $(x_{0},t_{0})\in\mathbb{R}\times(0,T)$ there exists a constant $C>0$, such that
\begin{equation}\label{bound_measure}
 \limsup_{n\rightarrow\infty}\mu_{n}(B_{r_n}(0,0))\leq C\,.
\end{equation}
But this is a direct consequence of Theorem 2.56 in \cite{AFP}.
Since (\ref{L_infinity_bound_for_u}) and (\ref{bound_measure}) hold, the assumptions of Proposition \ref{prop_lecumberry} are fulfilled and we can extract a subsequence $r_{k'}$ such that
\begin{equation*}
 u_{k'}\rightarrow u_{\infty}\quad\mbox{in}\quad L^{1}_{loc}(\mathbb{R}^2)\,.
\end{equation*}
Additionally we have by the weak$^{*}$ compactness of measures (see Theorem 1.59 in \cite{AFP}), that, possibly after extracting a further subsequence $r_{k}$,
\begin{equation*}
\mu_{k}\rightharpoondown\mu_{\infty}\quad\mbox{in}\quad\mathcal{M}\,,
\end{equation*}
Altogether we have for the sequence $r_{k}$
\begin{equation*}
 u_{k}\rightarrow u_{\infty}\quad\mbox{in}\quad L^{1}_{loc}(\mathbb{R}^2)\,
\end{equation*}
and
\begin{equation*}
 \mu_{k}\rightharpoondown\mu_{\infty}\quad\mbox{in}\quad\mathcal{M}_{loc}(\mathbb{R}^{2})\,,
\end{equation*}
which is what we aimed to prove.
\end{proof}
\begin{proof}[\textbf{Proof of Lemma \ref{lemma_positiv_density}}]
We argue by contradiction. Therefore we assume that there exists a point $(x_{0},t_{0})$ such that 
\begin{equation}\label{contradiction}
\limsup_{r\rightarrow 0^+}\frac{1}{r}\int_{B_{r}((x_{0},t_{0}))}\int_{\mathbb{R}}m(x,t,a)\,da\,dx\,dt<0\,.
\end{equation}
For a sequence $r_{n}\rightarrow0^{+}$ we define
\begin{equation*}\begin{array}{c}
u_{n}(x,t):=\left(D_{n}^{-1}\right)^{*}u(x,t)\\[5mm]
\mbox{and}\\[5mm]
\mu_{n}:=\displaystyle\frac{1}{r_n}\int_{\mathbb{R}}\left(D_n\right)_*m\,da\,.
\end{array}\end{equation*}
Let $u_{k}$ and $\mu_{k}$ be the subsequences given by Lemma \ref{compactness}, with limits $u_{\infty}$, $\mu_{\infty}$. Then we have by strong convergence, that $u_{\infty}$ is a weak solution of
\begin{equation*}
 \partial_{t}u_{\infty}+\partial_{x}f(u_{\infty})=0\,.
\end{equation*}
Furthermore, by the uniqueness of the distributional limit, we conclude that
\begin{equation*}
 \mu_{\infty}=\int_{\mathbb{R}}\partial_{t}(u_{\infty}\wedge a)+\partial_{x}f(u_{\infty}\wedge a)\,da\,.
\end{equation*}
From (\ref{contradiction}) we want to conclude now, that 
\begin{equation}\label{negative_of_limit}
\mu_{\infty}(B_R(0,0))<0\quad\mbox{for all}\quad R>0\,.
\end{equation}

\medskip

\noindent\textbf{Proof of (\ref{negative_of_limit}).} For the sake of contradiction, we assume, that there exists a $R_0$ such that
\[ \mu_{\infty}(B_{R_0}(0,0))\geq0\,.\]
In \cite{Le} it is proved, that there exits a set $K$, which is either a line, or a half-line, or the empty set, such that
\begin{equation}\label{structure_of_measure}
 \partial_{t}u_{\infty}\wedge a+\partial_{x}f(u_{\infty}\wedge a)=\left((X(u_{\infty}^{+}\wedge a)-X(u_{\infty}^{-}\wedge a)\right)\cdot\omega_{K}\,\mathcal{H}^{1}\res K\,,
\end{equation}
where
\begin{equation}\label{vectorfield_X}
 X(u)=\begin{pmatrix}f(u)\\ u\end{pmatrix}\quad\mbox{and}\quad  \omega_{K}=\frac{|u_{\infty}^{+}-u_{\infty}^{-}|}{|X(u^{+}_{\infty})-X(u_{\infty}^{-})|}\begin{pmatrix}1\\ -\frac{f(u_{\infty}^{+})-f(u_{\infty}^{-})}{u_{\infty}^{+}-u_{\infty}^{-}}\end{pmatrix}\,.
\end{equation}
Moreover therein it is proved, that $u_{\infty}$ is $\mathcal{H}^{1}$-a.e. approximately continuous in $K^{c}$ and has $\mathcal{H}^{1}$-a.e. constant approximate jump points $u_{\infty}^{\pm}$ on $K\,$.

A short calculation reveals
\begin{multline}\label{sign_of_measure}
\int_{\mathbb{R}} \left((X(u_{\infty}^{+}\wedge a)-X(u_{\infty}^{-}\wedge a)\right)\cdot\omega_{K}\\=\operatorname{sign}(u^{-}_{\infty}-u^+_{\infty})\int_{\min\{u_{\infty}^{+},u_{\infty}^{-}\}}^{\max\{u_{\infty}^{+},u_{\infty}^{-}\}}\frac{f(u_{\infty}^{+})+f(u_{\infty}^{-})}{2}-f(a)\,da\,.
\end{multline}
By convexity of $f$ we get
\[\frac{f(u_{\infty}^{+})+f(u_{\infty}^{-})}{2}>f(a)\quad\mbox{for}\quad a\in\left[\min\{u_{\infty}^{+},u_{\infty}^{-}\},\max\{u_{\infty}^{+},u_{\infty}^{-}\}\right]\,.\]
This and (\ref{sign_of_measure}) imply, that sign of $\mu_{\infty}$ is completely determined by $\operatorname{sign}(u^{-}_{\infty}-u^{+}_{\infty})$. 
Henceforth
\[ \mu_{\infty}(B_{R_0}(0,0))\geq0\]
can only be fulfilled, if
\[u^-_{\infty}\geq u^+_{\infty}\,.\]
But this implies that the measure is $\mu_{\infty}$ has a sign, i.e.
\[\mu_{\infty}\geq0\,.\]
Let $\mu_{k}^{\pm}$ be the positive respective negative part of $\mu_{k}$, i.e. $\mu_{k}^{\pm}$ are non-negative measures such that
\[\mu_{k}=\mu_k^+-\mu_k^-\,.\] Then we can extract a further subsequence $k'$ such that 
\[\mu_{k'}^{+}\rightharpoondown\nu^{+}\quad\mbox{and}\quad\mu_{k'}^{-}\rightharpoondown\nu^{-}\quad\mbox{in}\quad\mathcal{M}_{loc}(\mathbb{R}^2)\,.\]
For $R>0$ and  non-negative $\psi\in C^{\infty}_{c}(B_{R}(0,0))$ we get
\[\int_{B_{R_0}(0,0)}\psi\,d\mu_{\infty}=\lim_{k'\rightarrow\infty}\int_{B_{R}(0,0)}\psi\,d\mu_{k'}=\int_{B_{R}(0,0)}\psi\,d\nu^+-\int_{B_{R}(0,0)}\psi\,d\nu^-\,.\]
Since $\mu_{\infty}$ is non-negative we get for all non-negative $\psi\in C^{\infty}_{c}(B_{R_0}(0,0))$
\[\int_{B_{R_0}(0,0)}\psi\,d\nu^-\leq\int_{B_{R}(0,0)}\psi\,d\nu^+\,.\]
Hence
\begin{equation}\label{inequality_for_negative_and_positive_part}
 \nu^{-}(B_{R}(0,0))\leq \nu^{+}(B_{R}(0,0))
\end{equation}
By Theorem 1.2 in \cite{Le} (see also Theorem 1.1 in \cite{AKLR}) we have for a rectifiable set $J_u$ and an ${\mathcal H}^1$ measurable function $h:~J_u\to\mathbb{R}$
\begin{equation}\label{rectifiable}
\int_{\mathbb{R}}|m(x,t,a)|\,da=h\cdot\mathcal{H}^1\res J_u+\delta_u\,,
\end{equation}
where $\delta_u$ satisfies
\[\forall~B\quad\mbox{Borel}\quad \mathcal{H}^1(B)<\infty\Longrightarrow\delta_u(B)=0\,.\]
Therefore we can choose $R_1$, such that for all $k'$
\[\mu^{-}_{k'}(\partial B_{R_1}(0,0))\leq\frac{1}{r_{k'}}\int_{D_{k'}^{-1}(\partial B_{R_{1}}(0,0))}h\,d\mathcal{H}^1\res J_u=0\,.\]
Hence
\[\nu^{-}(\partial B_{R_1}(0,0))= \lim_{k'\rightarrow\infty}\mu^{-}_{k'}(\partial B_{R_1}(0,0))=0\,.\]
This and (\ref{inequality_for_negative_and_positive_part}) imply
\begin{align*}
 \limsup_{k'\rightarrow\infty}\mu_{k'}^-(B_{R_1}(0,0)&\leq\nu^-(\bar{B}_{R_1}(0,0))=\nu^-(B_{R_1}(0,0))\\
&\leq\nu^{+}(B_{R_1}(0,0))\leq\liminf_{k'\rightarrow\infty}\mu_{k'}^{+}(B_{R_1}(0,0))\,.
\end{align*}
\begin{align*}
\limsup_{k'\rightarrow\infty}\mu_{k'}(B_{R_1}(0,0))\geq\liminf_{k'\rightarrow\infty}\mu_{k'}^+B_{R_1}(0,0))-\limsup_{k'\rightarrow\infty}\mu^-_{k'}(B_{R_1}(0,0))\geq0\,,
\end{align*}
which obviously contradicts (\ref{contradiction}) and we get (\ref{negative_of_limit}).

Inequality (\ref{negative_of_limit}) implies, that the set $K$ in (\ref{structure_of_measure}) is non-empty and \[\mu_{\infty}<0\,,\]
which gives, again from above considerations
\[u_{\infty}^-<u_{\infty}^+\,.\]
Moreover the convexity of $f$ implies for every $a\in(u_{\infty}^{-},u_{\infty}^{+})$
\begin{multline*}
 \partial_{t}u_{\infty}\wedge a+\partial_{x}f(u_{\infty}\wedge a)\\=\left(\frac{f(a)-f(u_{\infty}^{-})}{a-u_{\infty}^{-}}-\frac{f(u_{\infty}^{+})-f(u_{\infty}^{-})}{u^+_{\infty}-u^-_{\infty}}\right)\left(a-u_{\infty}^{-}\right)\mathcal{H}^{1}\res K\leq0\,.
\end{multline*}
In other words, we get
\begin{equation*}
  \partial_{t}u_{\infty}\wedge a+\partial_{x}f(u_{\infty}\wedge a)\leq0\,.
\end{equation*}
For $P=(x_{p},t_{p})\in\mathbb{R}^{2}$ let $K=P+\mathbb{R}\omega_{K}^{\perp}$ if $K$ is a line or $K=P+\mathbb{R}_{+}\omega_{K}^{\perp}$ if $K$ is a halfline. Define
\begin{equation*}\begin{array}{c}
H^{+}:=\left\{(x,t):~((x,t)-P)\cdot\omega_{K}>0\right\}\\[5mm]
\mbox{and}\\[5mm]
H^{-}:=\left\{(x,t):~((x,t)-P)\cdot\omega_{K}<0\right\}
\end{array}\end{equation*}
if $K$ is a line and
\begin{equation*}\begin{array}{c}
 H^{+}:=\left\{(x,t):~((x,t)-P)\cdot\omega_{K}>0~\mbox{and}~x>f'(u_{\infty}^{+})(t-t_{p})+x_{p}\right\}\\[5mm]
H^{-}:=\left\{(x,t):~((x,t)-P)\cdot\omega_{K}<0~\mbox{and}~x<f'(u_{\infty}^{-})(t-t_{p})+x_{p}\right\}\,,
\end{array}
\end{equation*}
if $K$ is a half-line. From the proof of Proposition 3.3 in \cite{Le} (see also Theorem 6.2 in \cite{AKLR} for a similar proof) we get, that
\begin{equation*}
 u_{\infty}(x,t)=u_{\infty}^{-}\quad\mbox{on}\quad H^{-}\quad\mbox{and}\quad u_{\infty}(x,t)=u_{\infty}^{+}\quad\mbox{on}\quad H^{+}\,.
\end{equation*}
Now we choose $\bar{t}\in\mathbb{R}$ and $\delta>0$ in the definition of the sets $\Lambda_{\bar{t}}^{\bar{t}+1}$ and $\Gamma_{\bar{t}}^{\bar{t}+1}$ (see (\ref{the_set_gamma})), in such a way that 
\begin{equation*}
\left[-\frac{\delta}{2},\frac{\delta}{2}\right]\times\{t\}\cap K\neq\emptyset\quad\forall~t\in(\bar{t},\bar{t}+1)\,.
\end{equation*}
Furthermore $\Gamma_{\bar{t}}^{\bar{t}+1}$ is defined such that the conclusions of Lemma \ref{properties_of_entropy_solutions} applies to this trapeze.
In particular the strong convergence of $u_{k}$ in $L^{1}_{loc}(\mathbb{R}^{2})$ implies
\begin{equation*}
 u_{k}\rightarrow u_{\infty}\quad\text{in}\quad L^{1}\left(\Gamma_{\bar{t}}^{\bar{t}+1}\right)\,,
\end{equation*}
which directly implies by a change of variable
\begin{align*}
\int_{\bar{t}}^{\bar{t}+1}\int_{\Lambda_{t'}^{\bar{t}+1}}|u_{k}-u_{\infty}|\,d\sigma\,dt' \rightarrow 0\,.
\end{align*}
Thus for almost every $t_{1}\in(\bar{t},\bar{t}+1)$ we get
\begin{equation}\label{convergence_of_inital_data}
 \int_{\Lambda_{t_{1}}^{\bar{t}+1}}|u_{k}-u_{\infty}|\,d\sigma\rightarrow0\,
\end{equation}
and moreover by (\ref{rectifiable})
\begin{equation}\label{no_measure}
\mu_{k}(\Lambda_{t_1}^{\bar{t}+1})=\int_{D_k^{-1}(\Lambda_{t_1}^{\bar{t}+1})}h\,\mathcal{H}^1\res J_{u}=0\,.
\end{equation}
We set $t_{2}:=\bar{t}+1$, then, according to Proposition \ref{existence}, we can choose a $t_1\in(\bar{t},\bar{t}+1)$ such that for all $k\in\mathbb{N}$ (\ref{convergence_of_inital_data}), (\ref{no_measure}) holds and for $k\in\mathbb{N}\cup\{\infty\}$ there exists an entropy solution $w_{k}$ of
\begin{equation}\label{problem_for_uk}
\left.\begin{array}{rclcl}
\partial_{t}w_{k}+\partial_{x}f(w_{k})&=&0&\mbox{in}&\Gamma_{t_{1}}^{t_{2}}\,,\\[2mm]
w_{k}&=&u_{k}&\mbox{on}&\Lambda_{t_{1}}^{t_{2}}\,.
\end{array}\right\}
\end{equation}
By Lemma \ref{properties_of_entropy_solutions} we have for all $t_{1}\leq t<t_{2}$
\begin{equation*}
\int_{\theta^{-}(t)}^{\theta^{+}(t)}|w_{k}(x,t)-w_{\infty}(x,t)|\,dx\leq\int_{\Lambda_{t_{1}}^{t_{2}}}|u_{k}-u_{\infty}|\,d\sigma\,.
\end{equation*}
This and (\ref{convergence_of_inital_data}) imply
\begin{equation*}
 w_{k}\rightarrow w_{\infty}\quad\mbox{in}\quad L^{1}\left(\Gamma_{t_{1}}^{t_{2}}\right)\,.
\end{equation*}
By our choice of $t_{1}$, we have for an $x_{1}\in\left[-\frac{\delta}{2},\frac{\delta}{2}\right]$
\begin{equation*}
 u_{\infty}(x,t_{1})=\left\{\begin{array}{rl}
			 u_{\infty}^{-} &\mbox{if}~x<x_{1}\\
			u_{\infty}^{+} &\mbox{if}~x>x_{1}\,.\\
\end{array}\right.
\end{equation*}
This structure of $u_{\infty}$ at the time $t_1$ allows us to compute $w_{\infty}$ explicitly. Since $u_{\infty}^{-}<u_{\infty}^{+}$, the two states $u_{\infty}^{-}$ and $u_{\infty}^{+}$ are connected  by a rarefaction wave 
\begin{equation*}
 w_{\infty}(x,t):=\left\{\begin{array}{lcl}
                          u_{\infty}^{-}&\mbox{if}&x-x_{1}<f'(u_{\infty}^{-})(t-t_{1})\,,\\
			  \left(f'\right)^{-1}\left(\frac{x-x_{1}}{t-t_{1}}\right)&\mbox{if}&f'(u_{\infty}^{-})(t-t_{1})<x-x_{1}<f'(u_{\infty}^{+})(t-t_{1})\,,\\
			 u_{\infty}^{+}&\mbox{if}&x-x_{1}>f'(u_{\infty})^{+}(t-t_{1})\,.
                         \end{array}\right.
\end{equation*}
We observe, that $w_{\infty}$ is a Lipschitz function and this implies pointwise almost everywhere in $\Gamma_{t_{1}}^{t_{2}}$ 
\begin{equation*}
 \partial_{t}w_{\infty}+\partial_{x}f(w_{\infty})=0\,.
\end{equation*}
Hence
\begin{align*}
 q_{\infty}(x,t,a)&=\partial_{t}(w_{\infty}\wedge a)+\partial_{x}f(w_{\infty}\wedge a)\\
&=\mathbbm{1}_{w\leq a}\left[\partial_{t}w_{\infty}+f'(w_{\infty}\wedge a)\partial_{x}w_{\infty}\right]=0\quad\mbox{in}\quad\Gamma_{t_1}^{t_2}\,.
\end{align*}
Furthermore the strong convergence of $w_{k}$ in $L^{1}(\Gamma_{t_{1}}^{t_{2}})$ implies
\begin{equation*}
 q_{k}\rightharpoondown q_{\infty}\quad\mbox{in}\quad\mathcal{M}_{loc}(\mathbb{R}^2)\,,
\end{equation*}
where \[q_k=\partial_t w_k\wedge a+\partial_x f(w_k\wedge a)\,.\]
To simplify notations, we define
\begin{equation*}
 \begin{array}{c}
 \varGamma_{k}:=\left\{(x,t)\in\mathbb{R}\times(0,T):~D_k(x,t)\in\Gamma_{t_1}^{t_2}\right\}\\[5mm]
 \mbox{and}\\[5mm]
 \varLambda_k:=\left\{(x,t)\in\mathbb{R}\times(0,T):~D_k(x,t)\in\Lambda_{t_1}^{t_2}\right\}
 \end{array}\,,
\end{equation*}
where the map $D_k$ is defined in (\ref{dilation}).
Then we define the rescaled function
\begin{equation*}
 \tilde{w}_{k}(x,t)=\left\{
\begin{array}{lcl}
\left(D_k\right)^{*}w_{k}&\mbox{if}&(x,t)\in\varGamma_{k}\,,\\
u&\mbox{if}&(x,t)\in\mathbb{R}\times(0,t_0+r_{k}t_2)\backslash\varGamma_{k}\,.
\end{array}\right.
\end{equation*}
and claim, that $w_{k}\in L^{\infty}(\mathbb{R}\times(0,t_{0}+r_{k}t_{2}))$ is a weak solution of (\ref{scl}) for all $k\in\mathbb{N}$. To do so, we first observe that $u_{k}$ itself is a weak solution of (\ref{problem_for_uk}). With that knowledge we calculate.
\begin{equation*}\begin{split}
\int_{\varGamma_{k}}\tilde{w}_{k}\partial_{t}\psi+f(\tilde{w}_k)\partial_{x}\psi\,dx\,dt&=r^{2}\int_{\Gamma_{t_1}^{t_2}}\tilde{w}_{k}\partial_{t}\psi+f(\tilde{w}_{k})\partial_{x}\psi\,dx\,dt\\
&=-r_k^2\int_{\Lambda_{t_1}^{t_2}}\psi\begin{pmatrix} u_{k} \\
 -f(u_{k})\end{pmatrix}\cdot\tau\,d\sigma\\
&=r_k^{2}\int_{\Gamma_{t_1}^{t_2}}u_{k}\partial_{t}\psi+f(u_{k})\partial_{x}\psi\,dx\,dt\\
&=\int_{\varGamma_{k}}u\partial_{t}\psi+f(u)\partial_{x}\psi\,dx\,dt\,.
\end{split}\end{equation*}
Using this equality we see
\begin{align*}
\int_{\mathbb{R}\times[0,t_{0}+r_{n}t_2]}\tilde{w}_{n}\partial_{t}\psi+f(\tilde{w}_{k})\partial_{x}\psi\,dx\,dt&= \int_{\Gamma_{k}}\tilde{w}_{k}\partial_{t}\psi+f(\tilde{w}_{k})\partial_{x}\psi\,dx\,dt\\
&\quad+\int_{\mathbb{R}\times(0,t_0+r_{k}t_2)\backslash\varGamma_{k}}u\partial_{t}\psi+f(u)\partial_{x}\psi\,dx\,dt\\
&=\int_{\varGamma_{k}}u\partial_{t}\psi+f(u)\partial_{x}\psi\,dx\,dt\\
&\quad+\int_{\mathbb{R}\times(0,t_0+r_{k}t_2)\backslash\varGamma_{k}}u\partial_{t}\psi+f(u)\partial_{x}\psi\,dx\,dt\\
&=\int_{\mathbb{R}\times[0,t_{0}+r_{n}t_2]}u\partial_{t}\psi+f(u)\partial_{x}\psi\,dx\,dt\\
&=\int_{\mathbb{R}}u_{0}(x)\psi(x,0)\,dx\,,
\end{align*}
which means, that $\tilde{w}_k$ is indeed a weak solution of (\ref{scl}).
Therefore the minimality condition (\ref{minimality}) of $u$ applies and we deduce
\begin{equation*}
 \int_{\mathbb{R}\times (0,t_{0}+r_{k}t_{2})\times\mathbb{R}}|m(x,t,a)|\,da\,dx\,dt\leq \int_{\mathbb{R}\times(0,t_{0}+r_{k}t_{2})\times\mathbb{R}}|\tilde{q}_{k}(x,t,a)|\,da\,dx\,dt\,.
\end{equation*}
But since \[ m(x,t,a)=\tilde{q}_k(x,t,a)\quad\mbox{on}\quad\mathbb{R}\times(0,t_0+r_{k}t_2)\backslash\bar{\varGamma}_{k}\]
we get
\begin{equation}\label{comparison_measure}
\int_{\mathbb{R}}|m|(\varGamma_{k}\cup\varLambda_k,a)\,da\leq\int_{\mathbb{R}}|\tilde{q}_{k}|(\varGamma_{k}\cup\varLambda_k,a)\,da\,.
\end{equation}
We claim now
\begin{equation}\label{no_measure_on_boundary}
|\tilde{q}_{k}|(\varLambda_k,a)=0\quad\mbox{for all}\quad k\in\mathbb{N}\,.
\end{equation}
\textbf{Proof of (\ref{no_measure_on_boundary}).} We define the domain $\Lambda_{\varepsilon}$ such that
\begin{equation}
\partial\Lambda_{\varepsilon}=\Lambda_{t_1+\varepsilon}^{t_2}\cup\Lambda_{t_1-\varepsilon}^{t_2}\cup I_l\cup I_r
\end{equation}
and
\[\Lambda_{t_1}^{t_2}\subset\Lambda_{\varepsilon}\,,\]
where
\[I_l=\left[\frac{t_2-(t_1+\varepsilon)}{\hat{\lambda}}+\delta,\frac{t_2-(t_1-\varepsilon)}{\hat{\lambda}}\right]\]
and
\[I_r=\left[-\frac{t_2-(t_1-\varepsilon)}{\hat{\lambda}}-\delta,-\frac{t_2-(t_1+\varepsilon)}{\hat{\lambda}}-\delta\right]\]
Then for $\varGamma^{\varepsilon}_{k}:=D_{k}^{-1}(\Lambda_{\varepsilon})$ and $\psi\in C_{c}^{\infty}(\mathbb{R}\times(0,t_0+t_2r_k))$ it follows by Theorem 1.3.4 in \cite{Da}
\begin{equation}\label{boundary1}
\int_{\varLambda^{\varepsilon}_k}\tilde{w}\wedge a\ \partial_t\psi+f(u\wedge a)\partial_x\psi\,dx\,dt=\int_{\partial\varLambda^{\varepsilon}_k}\begin{pmatrix}f(\tilde{w})\\\tilde{w}\end{pmatrix}\cdot n\psi\,d\sigma+\int_{\varLambda^{\varepsilon}_k}\psi\,d\tilde{q}(x,t,a)\,,
\end{equation}
where $n$ is the outer unit normal of $\varLambda_{k}^{\varepsilon}$.
The boundary term can be separated in three parts
\begin{equation}\label{bound2}\begin{split}
\int_{\partial\varLambda^{\varepsilon}_k}\begin{pmatrix}f(\tilde{w})\\\tilde{w}\end{pmatrix}\cdot n\psi\,d\sigma&=\int_{D_k ^{-1}(\Lambda_{t_1-\varepsilon}^{t_2})}\begin{pmatrix}f(u)\\\ u\end{pmatrix}\cdot n\psi\,d\sigma-\int_{D_k ^{-1}(\Lambda_{t_1+\varepsilon}^{t_2})}\begin{pmatrix}f(\tilde{w})\\\tilde{w}\end{pmatrix}\cdot n\psi\,d\sigma\\
&\quad+\int_{D_k^{-1}(I_l)}\tilde{w}(x,t_0+r_kt_2)\,dx+\int_{D_k^{-1}(I_r)}\tilde{w}(x,t_0+r_kt_2)\,dx
\end{split}\end{equation}
As $\varepsilon\to0^+$ the two last quantities in the right-hand side of in (\ref{bound2}) vanish. For the first expression on the right hand side of (\ref{bound2}) one concludes
\begin{equation}\label{bound3}
\lim_{\varepsilon\to0^+}\int_{D_k ^{-1}(\Lambda_{t_1-\varepsilon}^{t_2})}\begin{pmatrix}f(u)\\\ u\end{pmatrix}\cdot n\psi\,d\sigma=\int_{D_k ^{-1}(\Lambda_{t_1}^{t_2})}\begin{pmatrix}f(u)\\\ u\end{pmatrix}\cdot n\psi\,d\sigma\,.
\end{equation}
With a change of variable and with Lemma \ref{properties_of_entropy_solutions} it follows
\begin{equation}\begin{split}\label{bound4}
\lim_{\varepsilon\to0^+}\int_{D_k ^{-1}(\Lambda_{t_1+\varepsilon}^{t_2})}\begin{pmatrix}f(\tilde{w})\\\tilde{w}\end{pmatrix}\cdot n\psi\,d\sigma&=\lim_{\varepsilon\to 0^+ }r_k\int_{\Lambda_{t_1+\varepsilon}^{t_2}}\begin{pmatrix}f(w)\\w\end{pmatrix}\cdot n\psi\,d\sigma\\
&=r_k\int_{\Lambda_{t_1+\varepsilon}^{t_2}}\begin{pmatrix}f(u_k)\\u_k\end{pmatrix}\cdot n\psi\,d\sigma\\
&=\int_{D_k ^{-1}(\Lambda_{t_1}^{t_2})}\begin{pmatrix}f(u)\\\ u\end{pmatrix}\cdot n\psi\,d\sigma\,.
\end{split}\end{equation}
From (\ref{bound2}), (\ref{bound3}) and (\ref{bound4}) we conclude
\begin{equation*}
\lim_{\varepsilon\to0^+}\int_{\partial\varLambda^{\varepsilon}_k}\begin{pmatrix}f(\tilde{w})\\\tilde{w}\end{pmatrix}\cdot n\psi\,d\sigma=0\,.
\end{equation*}
Therefore we can conclude from (\ref{boundary1})
\[\lim_{\varepsilon\to0^+}\int_{\Lambda^{\varepsilon}_{k}}\psi\ d\tilde{q}(x,t,a)=0\quad\mbox{for}\quad \psi\in C_c^{\infty}(\mathbb{R}\times(0,t_0+r_kt_2))\,.\]
From this it follows,
\[|\tilde{q}_k|(\varLambda_k,a)=0\]
as claimed.

\medskip

In a next step we show, that (\ref{no_measure_on_boundary}) induces 
\begin{equation}\label{limit}
\lim_{k\rightarrow\infty}\int_{\mathbb{R}}q_{k}(\Gamma_{t_1}^{t_2},a)\,da=\int_{\mathbb{R}}q_{\infty}(\Gamma_{t_1}^{t_2},a)\,da\,.
\end{equation}
\textbf{Proof of (\ref{limit}).} Since $w_k$ is an entropy solution we deduce from (\ref{no_measure_on_boundary}) that $|\tilde{q}_k|(\partial \varGamma_k,a)=0$ and therefore
\begin{equation}\label{limit2}
\frac{1}{r_k}\int_{\mathbb{R}}\left(D_k\right)_{*}|\tilde{q}_k|(\partial\Gamma_{t_1}^{t_2},a)\,da=0\,.
\end{equation}
Lemma \ref{properties_of_entropy_solutions} and (\ref{limit2}) imply for a constant $C>0$
\begin{align*}
\frac{1}{r_k}\int_{\mathbb{R}}\left(D_k\right)_{*}|\tilde{q}_k|(\bar{\Gamma}_{t_1}^{t_2},a)\,da&=\frac{1}{r_k}\int_{\mathbb{R}}\left(D_k\right)_{*}|\tilde{q}_k|(\partial\Gamma_{t_1}^{t_2},a)\,da+\int_{\mathbb{R}}q_k(\Gamma_{t_1}^{t_2},a)\,da\\
&=\int_{\mathbb{R}}q_k(\Gamma_{t_1}^{t_2},a)\,da<C\,.
\end{align*}
Hence one gets for a positive measure $\nu\in\mathcal{M}(\bar{\Gamma}_{t_1}^{t_1})$ after possibly extracting a subsequence
\begin{equation*}
\frac{1}{r_k}\int_{\mathbb{R}}\left(D_k\right)_{*}|\tilde{q}_k|\rightharpoondown\nu\quad\mbox{in}\quad\mathcal{M}(\bar{\Gamma}_{t_1}^{t_1})\,.
\end{equation*}
Then Proposition 1.62 in \cite{AFP} and (\ref{limit2}) imply
\begin{equation}\label{limit3}
\lim_{k\rightarrow\infty}\frac{1}{r_k}\int_{\mathbb{R}}\left(D_k\right)_{*}|\tilde{q}_k|(\partial\Gamma_{t_1}^{t_2},a)\,da=\nu(\partial\Gamma_{t_1}^{t_2})=0\,.
\end{equation}
But $\nu(\partial\Gamma_{t_1}^{t_2})=0$  and Proposition 1.62 in \cite{AFP} give again
\begin{equation*}
\lim_{k\rightarrow\infty}\frac{1}{r_k}\int_{\mathbb{R}}\left(D_k\right)_{*}|\tilde{q}_k|(\Gamma_{t_1}^{t_2},a)\,da=\lim_{k\rightarrow\infty}\int_{\mathbb{R}}q_{k}(\Gamma_{t_1}^{t_2},a)\,da=\int_{\mathbb{R}}q_{\infty}(\Gamma_{t_1}^{t_2},a)\,da\,.
\end{equation*}

\medskip

Since (\ref{no_measure}) and (\ref{no_measure_on_boundary}) holds we deduce from (\ref{comparison_measure})
\begin{equation*}
|\mu_k|(\Gamma_{t_1}^{t_2})\leq\int_{\mathbb{R}}q_k(\Gamma_{t_1}^{t_1},a)\,da\,.
\end{equation*}
Taking the limit on both sides and applying (\ref{limit}) gives
\begin{align*}
|\mu_{\infty}|(\Gamma_{t_1}^{t_2})&\leq \liminf_{k\rightarrow+\infty}|\mu_{k}|(\Gamma_{t_1}^{t_2})\leq \liminf_{k\rightarrow+\infty}\int_{\mathbb{R}}q_{k}(\Gamma_{t_1}^{t_2},a)\,da\\
&=\int_{\mathbb{R}}q_{\infty}(\Gamma_{t_1}^{t_2},a)\,da=0\,.
\end{align*}
But \[ |\mu_{\infty}|(\Gamma_{t_1}^{t_2})=0\] is contradiction to (\ref{negative_of_limit}). Therefore 
\[\limsup_{r\rightarrow 0^+}\frac{1}{r}\int_{\mathbb{R}}m(B_{r}((x_{0},t_{0})),a)\,da\geq 0\,,\]
which is, what we aimed to prove.
\end{proof}
%%%%%%%%%%%%%%%%%%%%%%%%%%%%%%%%%%%%%%%%%%%%%%%%%%%%%%%%%%%%%%%%%%%%%%%%%%%%%%%%%%%%%%%%%%%%%%%%%%%%%%%%%%%%%%%%%%%%%%%%%%%%%%%%%%%
%%%%%%%%%%%%%%%%%%%%%%%%%%%%%%%%%%%%%%%%%%%%%%%%%%%%%%%%%%%%%%%%%%%%%%%%%%%%%%%%%%%%%%%%%%%%%%%%%%%%%%%%%%%%%%%%%%%%%%%%%%%%%%%%%%%
%%%%%%%%%%%%%%%%%%%%%%%%%%%%%%%%%%%%%%%%%%%%%%%%%%%%%%%%%%%%%%%%%%%%%%%%%%%%%%%%%%%%%%%%%%%%%%%%%%%%%%%%%%%%%%%%%%%%%%%%%%%%%%%%%%%
\subsection{Proving that $u$ is entropic}
In this last section we are going to prove
\begin{lemma}\label{proving_entropy}
 Let $u\in L^{\infty}(\mathbb{R}\times [0,T)$ be a weak solution of (\ref{scl}). Let $m(x,t,a)$ its entropy defect measure. If for $\mathcal{H}^1$ almost every $(x_{0},t_{0})\in\mathbb{R}\times(0,T)$ 
\begin{equation}\label{positivity_assumption}
 \limsup_{r\rightarrow0^+}\frac{1}{r}\int_{B_{r}(x_{0},t_{0})}\int_{\mathbb{R}}m(x,t,a)\,da\,dx\,dt\geq0\,,
\end{equation}
then $u$ is the entropy solution of (\ref{scl}).
\end{lemma}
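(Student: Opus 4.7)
The plan is to argue by contradiction, using the blow-up analysis of Section~\ref{section_blow_up} together with the structure result of Lecumberry to translate the density condition (\ref{positivity_assumption}) into pointwise non-negativity of $m(\cdot,\cdot,a)$ for every $a$. Recall that by the Kruzhkov characterization (\ref{e_inequality}), it is enough to show $m(\cdot,\cdot,a)\geq 0$ for every $a\in\mathbb{R}$. Suppose this fails, so that the negative part $m^{-}$ of $m$ in its Jordan decomposition is non-trivial; I would then deduce a contradiction with (\ref{positivity_assumption}).

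The first step is to decompose $\int|m|\,da$ using the rectifiability result (\ref{rectifiable}) into a part carried by the $1$-rectifiable set $J_{u}$, with $\mathcal{H}^{1}$-density $h$, and a diffuse part $\delta_{u}$ that vanishes on $\mathcal{H}^{1}$-finite sets. At $\mathcal{H}^{1}$-a.e. point $(x_{0},t_{0})\in J_{u}$ I would perform the blow-up of Lemma \ref{compactness}, extracting a limit $u_{\infty}$ and $\mu_{\infty}=\int\bigl(\partial_{t}(u_{\infty}\wedge a)+\partial_{x}f(u_{\infty}\wedge a)\bigr)\,da$. By the characterization (\ref{structure_of_measure})--(\ref{vectorfield_X}), $u_{\infty}$ is a two-state Riemann configuration separated by a line or half-line $K$ with normal $\omega_{K}$, and the computation (\ref{sign_of_measure}) together with the strict convexity of $f$ shows that $\mu_{\infty}$ has the same sign as $u_{\infty}^{-}-u_{\infty}^{+}$ on $K$. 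The assumption (\ref{positivity_assumption}), transferred to the blow-up exactly as in the proof of Lemma \ref{lemma_positiv_density} (using the atomless boundary argument $\nu^{-}(\partial B_{R})=0$), yields $\mu_{\infty}(B_{R}(0,0))\geq 0$ and hence forces $u_{\infty}^{-}\geq u_{\infty}^{+}$ at $\mathcal{H}^{1}$-a.e. point of $J_{u}$.

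Once the entropy orientation of the jump is established, convexity of $f$ upgrades the inequality to each value of $a$ separately: for every $a\in\mathbb{R}$,
\[
\bigl(X(u_{\infty}^{+}\wedge a)-X(u_{\infty}^{-}\wedge a)\bigr)\cdot\omega_{K}\geq 0,
\]
so $\partial_{t}(u_{\infty}\wedge a)+\partial_{x}f(u_{\infty}\wedge a)\geq 0$. This rules out the rectifiable contribution to $m^{-}$. For the diffuse part $\delta_{u}$, at $\mathcal{H}^{1}$-a.e. point outside $J_{u}$ the function $u$ is approximately continuous, the blow-up $u_{\infty}$ is a constant, and the corresponding blow-up measure vanishes; combined with the fact that $\delta_{u}$ vanishes on $\mathcal{H}^{1}$-finite sets, this forces the $1$-density of $\delta_{u}$ to be zero everywhere, and a Besicovitch-type differentiation argument in the spirit of \cite{ALR} then rules out any remaining negative contribution.

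I expect the principal difficulty to lie in this last step. The rectifiable component of $m$ is essentially controlled by the Riemann-problem sign analysis already carried out in Section~\ref{section_blow_up}, but the $1$-density assumption (\ref{positivity_assumption}) has no direct content at a point where $\delta_{u}$ accumulates, since its $1$-density is zero a priori. Ensuring that no negative mass of $m$ can hide in $\delta_{u}$ while respecting both (\ref{positivity_assumption}) and the sign constraints coming from convexity is precisely the place where one must follow the Besicovitch-covering/viscosity argument of \cite{ALR}.
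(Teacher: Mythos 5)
Your proposal takes a genuinely different route from the paper, and it contains a gap at exactly the step you flag as the ``principal difficulty.'' Your plan is to deduce $m(\cdot,\cdot,a)\geq 0$ directly from (\ref{positivity_assumption}) by splitting $\int_{\mathbb R}|m|\,da$, via (\ref{rectifiable}), into a part carried by the rectifiable set $J_u$ and a diffuse part $\delta_u$. The $J_u$-analysis through blow-up, (\ref{sign_of_measure}) and convexity is sound and essentially repeats what Section~\ref{section_blow_up} already does. The diffuse part is where the argument breaks: $\delta_u$ is, by definition, singular with respect to $\mathcal{H}^1$, i.e.\ it annihilates every Borel set of finite $\mathcal{H}^1$-measure, and hence it lives on sets which are invisible to an $\mathcal{H}^1$-a.e.\ condition. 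Hypothesis (\ref{positivity_assumption}) is imposed only at $\mathcal{H}^1$-a.e.\ point, so it is vacuous precisely where $\delta_u$ concentrates; concluding that ``the $1$-density of $\delta_u$ is zero $\mathcal{H}^1$-a.e.'' is correct but does not force $\delta_u=0$, because $\delta_u$-null and $\mathcal{H}^1$-null are mutually singular notions here. The phrase ``Besicovitch-type differentiation argument in the spirit of \cite{ALR}'' does not name a mechanism that closes this gap: differentiation theory is exactly what tells you that an $\mathcal{H}^1$-a.e.\ density hypothesis has no content on the $\mathcal{H}^1$-singular part of a measure.

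The paper avoids this obstruction by not trying to prove $m\geq 0$ directly. Via Conway--Hopf (Theorem~\ref{correspondence}) it passes to a Lipschitz potential $g$ with $u=\partial_x g$, which is automatically a viscosity subsolution of (\ref{hamilton_jacobi}) because it solves the equation almost everywhere. The whole content of the lemma is then to show that $g$ is a viscosity \emph{super}solution, which implies that $u$ is entropic by the one-sided Oleinik bound. The supersolution property is a pointwise condition checked at a touching point of a smooth test function, and there the paper performs a blow-up, uses (\ref{positivity_assumption}) only to obtain $m_\infty\geq 0$ for the blow-up limit, and then runs the coarea/level-set argument of \cite{ALR} with the auxiliary function $h_\delta=g_\infty-\psi_\infty+\tfrac{\delta}{2}[(1-\lambda)x^2+t^2]$. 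This reformulation replaces the unattainable ``show $m\geq 0$ as a measure'' with a pointwise blow-up criterion, which is the kind of statement an $\mathcal{H}^1$-density hypothesis can reach. That reformulation is the idea missing from your proposal.
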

\begin{proof}[\textbf{Proof of Lemma \ref{proving_entropy}}.] We follow closely \cite{ALR}. Without loss of generality we can assume $f(0)=0$ and $f\geq0$. According to Theorem \ref{correspondence} there exists a $g\in W^{1,\infty}(\mathbb{R}\times[0,T))$ such that $u=\partial_{x}g$ and it satisfies almost everywhere
\begin{equation}\label{HJ}
\left.
\begin{array}{rl}
 \partial_{t}g+f(\partial_{x}g)&=0\,,\\
\partial_{x}g(x,0)&=u_{0}(x)\,.
\end{array}\right\}
\end{equation}
We want to show, that $g$ is a viscosity solution of (\ref{HJ}), i.e. we want to prove, that $g$ is a sub- and supersolution of (\ref{HJ}). This immediately implies by Corollary 1.7.2 in \cite{ALR}, that $u$ is an entropy solution. We already now, that $g$ satisfies (\ref{HJ}) almost everywhere, then Proposition 5.1 in \cite{BC} implies, that $g$ is a subsolution. Therefore it remains to show, that $g$ is a supersolution of (\ref{HJ}).
Let $\psi\in C^{1}(\mathbb{R}\times\mathbb{R}_{+})$ such that $g-\psi$ has a local minimum in $(x_{0},t_{0})$. Without loss of generality we can assume $g(x_{0},t_{0})=\psi(x_{0},t_{0})$. We want to show that
\[ \partial_{t}\psi(x_{0},t_{0})+f\left(\partial_{x}\psi(x_{0},t_{0})\right)\geq0\,.\]
We argue by contradiction, therefore we assume
\[ \partial_{t}\psi(x_{0},t_{0})+f\left(\partial_{x}\psi(x_{0},t_{0})\right)<0\,.\]
Since $f\geq0$ this immediately implies
\begin{equation}\label{psi_is_negative}
\partial_t\psi(x_0,t_0)<0\,.
\end{equation}
For a sequence $r_{n}\rightarrow0^{+}$ we introduce
\begin{align*}
 u_{n}(x,t)&=u(x_{0}+r_{n}x,t_{0}+r_{n}t),\\
 \psi_{n}(x,t)&=\frac{1}{r_{n}}\left(\psi(x_{0}+\lambda r_{n}x,t_{0}+r_{n}t)-\psi(x_{0},t_{0})\right),\\
 g_{n}(x,t)&=\frac{1}{r_{n}}\left(g(x_{0}+r_{n}x,t_{0}+r_{n}t)-g(x_{0},t_{0})\right),
\end{align*}
 where $0<\lambda<1$ is a constant, which we choose later. According to Lemma \ref{compactness} we can extract a subsequence $r_{k}$ such that 
\[u_{k}\rightarrow u_{\infty}\quad\mbox{in}\quad L^{1}(B_{1}) \]
Since $\partial_{x}g_{k}=u_{k}$ and $\partial_{t}g_{k}=f(u_{k})$ we have by Arzela-Ascoli, that $g_{k}$ converges uniformly to a Lipschitz function $g_{\infty}$ such that $\partial_{x}u_{\infty}=g_{\infty}$ and $g_{\infty}$ fulfills (\ref{HJ}) almost everywhere. Furthermore we have for $\psi_{\infty}:=\nabla\psi(x_{0},t_{0})\cdot(\lambda x, t)^{\mathrm{T}}$
\begin{equation*}
 \lim_{k\rightarrow\infty}\psi_{k}(x,t)=\psi_{\infty}\,.
\end{equation*}
We notice, that for all $0<\lambda<1$ and for all $k$ the functions $g_{k}-\psi_{k}$ have a local minimum in $(0,0)\,.$ By uniform convergence the function $g_{\infty}-\psi_{\infty}$ admits also a local minimum in $(0,0)\,.$ Moreover 
\begin{equation*}
\mu_{k}=\frac{1}{r_k}\int_{\mathbb{R}}\left(D_k\right)_{*}m\,da\rightharpoondown \mu_{\infty}\quad\mbox{in}\quad\mathcal{M}(B_{1})\,.
\end{equation*}
Similar as in Section \ref{section_blow_up} from
\begin{equation*}
 \lim_{k\rightarrow\infty}\int_{B_{1}(0,0)}\mu_{k}(B_1(0,0))\geq0\,,
\end{equation*}
we can conclude
\begin{equation*}
m_{\infty}(x,t,a):=\partial_{t}u_{\infty}\wedge a+\partial_{x}f(u_{\infty}\wedge a)\geq0\,.
\end{equation*}
Let $\delta>0$, then the function
\begin{equation*}
 h_{\delta}(x,t):=g_{\infty}-\psi_{\infty}+\frac{\delta}{2}\left[(1-\lambda)x^2+t^2\right]
\end{equation*}
is defined on $B_{1}$ and has a strict minimum in $(0,0)$. Notice that $h_{\delta}(0,0)=0$ and $h\geq0$ in $B_{1}$. We claim that 
\begin{equation}\label{gradient_positive}
 |\nabla h_{\delta}|>0\quad\mbox{a.e. in}\quad B_{1}\,.
\end{equation}
\textbf{Proof of (\ref{gradient_positive}).} Let $(x,t)\in B_{1}$ such that $h_{\delta}$ is differentiable in $(x,t)$ and $\nabla h_{\delta}(x,t)=0$. It follows since $g_{\infty}$ solves (\ref{HJ})
\begin{align*}
0&=\partial_t g_{\infty}+f(\partial_x g_{\infty})\\
&=\partial_t\psi(x_0,t_0)-\delta t+f(\lambda\partial_x\psi(x_0,t_0)+(1-\lambda)\delta x)\\
&\leq \partial_t\psi(x_0,t_0)+\lambda f(\partial_x\psi(x_0,t_0))+((1-\lambda)f(\delta x)-\delta t)\,.
\end{align*}
Since (\ref{psi_is_negative}) holds, we can choose $\delta$ and $\lambda$ small enough the expression
\[ \partial_t\psi(x_0,t_0)+\lambda f(\partial_x\psi(x_0,t_0))+\delta(f(\delta x)-t)\]
becomes strictly negative, which is a contradiction. Therefore the claim (\ref{gradient_positive}) is proved.

Further we choose $\delta$ and $\lambda$ small enough such that
\begin{equation}\label{choice_of_delta_lambda}
 |\partial_t\psi(x_0,t_0)|>\lambda\,\partial_x\psi(x_0,t_0)\cdot\sup_{s\in[-\|u\|_{\infty},\|u\|_{\infty}]}f'(s)+\delta((1-\lambda)x+t)\,.
\end{equation}
By $\tau>0$ we denote the minimum of $h_{\delta}$ on $\partial B_{1}$ and by $\overline{a}$ the essential supremum of $u_{\infty}$ on $\{h_{\delta}<\tau\}$. If $\overline{a}>0$ let $\underline{a}$ be close to $\overline{a}$ such that $0<\underline{a}<\overline{a}$. Let $A:=\{h_{\delta}<\tau\}\cap \{\underline{a}<u_{\infty}\}$. The set $A$ has positive Lebesgue measure. Therefore by the Coarea Formula and by $|\nabla h_{\delta}|>0$ it follows for $E_{s}:=\{h_{\delta}=s\}$
\begin{equation*}
 0<\int_{A}|\nabla h_{\delta}(x,t)|\,dx\,dt=\int_{0}^{\tau}\mathcal{H}^{1}(A\cap E_{s})\,ds\,.
\end{equation*}
Hence the set
\begin{equation*}
 S:=\left\{s\in(0,\tau): \mathcal{H}^{1}(\{\underline{a}<u_{\infty}\}\cap E_{s})>0,~\mathcal{H}^{1}(\{u_{\infty}>\overline{a}\}\cap E_{s})=0\right\}
\end{equation*}
has positive Lebesgue measure. For a vector $v=(v_{1},v_{2})$ we define $v^{\perp}:=(-v_{2},v_{1})$ and for a $s\in S$ the function
\begin{equation*}
 s\to l(s):=\int_{E_{s}}\left[X(u_{\infty}\wedge a)-\nabla^{\perp}\psi_{\infty}+\delta((1-\lambda)x,t)^{\perp}\right]\cdot\nu\,
\end{equation*}
where $\nu=\frac{\nabla h_{\delta}}{|\nabla h_{\delta}|}$ and the $X$ is the vectorfield from (\ref{vectorfield_X}). We choose $s\in S$ such that 
\begin{equation*}
 \lim_{\varepsilon\rightarrow0}\frac{1}{\varepsilon}\int^{s}_{s-\varepsilon}l(s')ds'=l(s)\,.
\end{equation*}
We define $\zeta_{\varepsilon}(x,t):=1\wedge(s-h_{\delta})^{+}/\varepsilon$ and calculate
\begin{equation*}
 \nabla \zeta_{\varepsilon}=\left\{\begin{array}{ll} 
					0&\mbox{if}~h_{\delta}>s~\mbox{or}~h_{\delta}<s-\varepsilon\\
                                   -\frac{1}{\varepsilon}\nabla h_{\delta} &\mbox{if}~ s-\varepsilon<h_{\delta}<s\,.
				\end{array}\right.
\end{equation*}
The choice of $s\in S$ an the Coarea Formula implies
\begin{multline*}
 \lim_{\varepsilon\rightarrow0}\int_{B_{1}}\left[X(u_{\infty}\wedge \underline{a})-\nabla^{\perp}\psi_{\infty}+\delta((1-\lambda)x,t)^{\perp}\right]\cdot\nabla\zeta_{\varepsilon}\\
=-\lim_{\varepsilon\rightarrow0}\frac{1}{\varepsilon}\int_{s-\varepsilon}^{s}l(s')ds'=l(s)\,.
\end{multline*}
The sign of $m_{\infty}$ gives
\begin{equation*}
 0\leq -\int_{B_{1}}\left[X(u_{\infty}\wedge \underline{a})-\nabla^{\perp}\psi_{\infty}+\delta((1-\lambda)x,t)^{\perp}\right]\cdot\nabla\zeta_{\varepsilon}\,.
\end{equation*}
As $\varepsilon\rightarrow0$ this implies
\begin{equation*}
 0\leq\int_{E_{s}}\left[X(u_{\infty}\wedge \underline{a})-\nabla^{\perp}\psi_{\infty}+\delta((1-\lambda)x,t)^{\perp}\right]\cdot\nu\,.
\end{equation*}
Now define $E^{+}_{s}:=E_{s}\cap\{u_{\infty}>\underline{a}\}$ and $E^{-}_{s}:=E_{s}\cap\{u_{\infty}\leq\underline{a}\}$. For $(x,t)\in E^{-}_{s}$ we notice
\begin{equation*}
 \left[X(u_{\infty}\wedge a)-\nabla^{\perp}\psi_{\infty}+\delta((1-\lambda)x,t)^{\perp}\right]\cdot\nu=\nabla^{\perp} h_{\delta}\cdot\nabla h_{\delta}=0\,.
\end{equation*}
Therefore it follows
\begin{equation*}
  0\leq\int_{E_{s}^+}\left[X(\underline{a})-\nabla^{\perp}\psi_{\infty}+\delta((1-\lambda)x,t)^{\perp}\right]\cdot\nabla h_{\delta}\,.
\end{equation*}
In order to get a contradiction we claim
\begin{equation}\label{negativity_of_the_whole_expression}
  \left(X(\underline{a})-\nabla^{\perp}\psi_{\infty}+\delta((1-\lambda)x,t)^{\perp}\right)\cdot\nabla h_{\delta}<0\,.
\end{equation}
We rearrange terms
\begin{multline}\label{rearanged_terms}
 \left[X(\underline{a})-\nabla^{\perp}\psi_{\infty}+\delta((1-\lambda)x,t)^{\perp}\right]\cdot\nabla h_{\delta}\\
=X(\underline{a})\cdot\nabla g_{\infty}+\left(\nabla\psi_{\infty}-\delta((1-\lambda)x,t)\right)\left(\nabla^{\perp}g_{\infty}-X(\underline{a})\right)\,.
\end{multline}
We show (\ref{negativity_of_the_whole_expression}), by proving that each term on the right hand side of (\ref{rearanged_terms}) is negative respectively strictly negative. Firstly we treat the first term and claim
\begin{equation}\label{negative_of_scalarproduct}
 X(\underline{a})\cdot\nabla g_{\infty}<0\,.
\end{equation}
A short calculation reveals
\begin{align*}
 X(\underline{a})\cdot\nabla g_{\infty}&=f(\underline{a})u_{\infty}-f(u_{\infty})\underline{a}\\
&=f(\underline{a})(u_{\infty}-\underline{a})+(f(a)-f(u_{\infty}))\underline{a}\\
&=\underline{a}(u_{\infty}-\underline{a})\left(\frac{f(\underline{a})-f(0)}{\underline{a}}-\frac{f(u_{\infty})-f(\underline{a})}{u_{\infty}-\underline{a}}\right)\,.
\end{align*}
By convexity of $f$ we have in the case $\underline{a}<u_{\infty}<\overline{a}<0$
\[\frac{f(u_{\infty})-f(\underline{a})}{u_{\infty}-\underline{a}}<\frac{f(\overline{a})-f(\underline{a})}{\overline{a}-\underline{a}}<\frac{f(\underline{a})-f(0)}{\underline{a}}\,.\]
This implies
\[\underline{a}\left(\frac{f(\underline{a})-f(0)}{\underline{a}}-\frac{f(u_{\infty})-f(\underline{a})}{u_{\infty}-\underline{a}}\right)\leq0\]
and henceforth (\ref{negative_of_scalarproduct}), if $\overline{a}\leq0$. On the other hand if $0<\underline{a}<\overline{a}$, we get for $\xi\in(0,\underline{a})$, $\alpha\in(\underline{a},u_{\infty})$
\[\frac{f(\underline{a})-f(0)}{a}=f'(\xi)<f'(\underline{a})<f'(\alpha)=\frac{f(u_{\infty})-f(\underline{a})}{u_{\infty}-\underline{a}}\,,\]
which implies (\ref{negative_of_scalarproduct}). Hence the first term of (\ref{rearanged_terms}) is non-positive and it remains to treat the second term. A short calculation gives
\begin{multline*}
\left(\nabla\psi_{\infty}-\delta((1-\lambda)x,t)\right)\left(\nabla^{\perp}g_{\infty}-X(\underline{a})\right)\\
=(u_{\infty}-\underline{a})\left[\partial_t\psi(x_0,t_0)+\lambda\partial_x\psi(x_0,t_0)\frac{f(u_{\infty})-f(\underline{a})}{u_{\infty}-\underline{a}}+\delta((1-\lambda)x+t)\right]\,.
\end{multline*}
Our choice of $\delta$ and $\lambda$ (see (\ref{choice_of_delta_lambda})) imply, that

\[\left(\nabla\psi_{\infty}-\delta((1-\lambda)x,t)\right)\left(\nabla^{\perp}g_{\infty}-X(\underline{a})\right)<0\]
and thus (\ref{negativity_of_the_whole_expression}). Finally (\ref{negativity_of_the_whole_expression}) implies
\begin{equation*}
\int_{E^{+}_{s}}\left(X(\underline{a})-\lambda\nabla^{\perp}\psi(x_{0},t_{0})+(1-\lambda)\delta(x,t)^{\perp}\right)\cdot\nabla h_{\delta}=0\,.
\end{equation*}
Since \[\left(X(\underline{a})-\lambda\nabla^{\perp}\psi(x_{0},t_{0})+(1-\lambda)\delta(x,t)^{\perp}\right)\cdot\nabla h_{\delta}<0\] it follows $\mathcal{H}^{1}(E^{+}_{s})=0$, which is a contradiction to our choice of $s\in S$. Thus
\begin{equation*}
 \partial_{t}\psi(x_{0},t_{0})+f\left(\partial_{x}\psi(x_{0},t_{0})\right)\geq0
\end{equation*}
as claimed. Henceforth $g$ is the viscosity solution of (\ref{HJ}) and $u=\partial_x g$ the entropy solution of (\ref{scl}) as claimed.
\end{proof}
\textbf{Proof of Theorem 1} Thanks to Lemma \ref{lemma_positiv_density} and Lemma \ref{proving_entropy} we can conclude the proof of Theorem \ref{minimality_theorem}. Indeed, we see that a weak solution $u\in L^{\infty}(\mathbb{R}\times[0,T))$ satisfying the assumptions of Theorem \ref{minimality_theorem}, has by Lemma \ref{lemma_positiv_density} ${\mathcal H}^1$-a.e. points of positive density, i.e.
\[\limsup_{r\rightarrow0^+}\frac1r\int_{B_{r}(x_0,t_0)}\int_{\mathbb{R}}m(x,t,a)\,da\,dx\,dt\geq0\quad\mbox{for }{\mathcal H}^1\mbox{ a.e. }\quad (x_0,t_0)\in\mathbb{R}\times(0,T)\,.\]
By Lemma \ref{proving_entropy} we know then, that $u$ has to be entropic.

%Literaturverzeichnis

\end{document}